\newtheorem{thm}{Theorem}
\theoremstyle{definition}
\newtheorem{cor}[thm]{Corollary}
\newtheorem{lem}[thm]{Lemma}
\newtheorem{prop}[thm]{Proposition}
\newtheorem{defn}[thm]{Definition}
\newtheorem{bsp}[thm]{Example}
\newtheorem{fact}[thm]{Fact}
\newtheorem{conjecture}[thm]{Conjecture}
\newtheorem*{thmA}{Theorem A}
\newtheorem*{thmB}{Theorem B}
\theoremstyle{remark}
\newtheorem{claim}{Claim}
\numberwithin{equation}{section}
\newcommand*{\abs}[1]{\left\vert#1\right\vert}
\newcommand*{\set}[1]{\left\{#1\right\}}
\newcommand*{\pair}[1]{\langle#1\rangle}
\newcommand{\eps}{\varepsilon}
\newcommand*{\cl}[1]{\textbf{cl}_{#1}}
\newcommand{\dist}{\operatorname{dist}}
\newcommand{\N}{\mathbb{N}}
\newcommand{\Q}{\mathbb{Q}}
\newcommand{\R}{\mathbb{R}}
\DeclareMathOperator{\lexmin}{lex\,min }
\newcommand{\K}{\mathbb{K}}
\def \<{\langle}
\def \>{\rangle}
\def \hat {\widehat}
\def \((  {(\!(}
\def \)) {)\!)}
\def \rem {\text{rem}}
\def \cl {\mathrm{cl}}
\newcommand{\Cone}{\mathcal C^1}
\newcommand{\Pa}[1]{\bigl( #1 \bigr)}
\newcommand{\pn}{at most pseudo-enumerable\xspace}
\newcommand{\lrf}{\lambda_r f}
\newcommand{\llf}{\lambda_\ell f}
\newcommand{\Lrf}{\Lambda_r f}
\newcommand{\Llf}{\Lambda_\ell f}
\newcommand{\Disc}{\mathcal D}
\newcommand*{\structure}[1]{\langle #1 \rangle}
\DeclareMathOperator{\fr}{fr}
\newcommand*{\interior}[1]{\mathring{#1}}
\newcommand*{\rest}[1]{\upharpoonright_{#1}}
\begin{document}

\title[Dichotomy for DC fields]{A fundamental dichotomy for definably complete expansions of
  ordered fields}%
\author{Antongiulio Fornasiero and Philipp Hieronymi}

\address
{Seconda Università di Napoli\\
Viale Lincoln 5\\
81100 Caserta\\
 Italy}
\email{antongiulio.fornasiero@gmail.com}
\urladdr{http://www.dm.unipi.it/\textasciitilde fornasiero}

\address{University of Illinois at Urbana-Champaign\\
Department of Mathematics\\
1409 W. Green Street\\
Urbana, IL 61801\\
USA}
\email{phierony@illinois.edu}

\thanks{ A version of this paper will appear in the \emph{Journal of Symbolic Logic.}
The first author was supported by Italian FIRB 2010 "New advances in the Model Theory of exponentiation".
The second author was partially supported by NSF grant DMS-1300402 and by UIUC Campus Research Board award 13086.}

\subjclass[2000]{Primary 03C64}

\date{\today}

\maketitle

\begin{abstract} An expansion of a definably complete field either defines a discrete subring, or the image of every definable discrete set under every definable map is nowhere dense.
As an application we show a definable version of Lebesgue’s differentiation theorem.
\end{abstract}

\section{Introduction}

Let $\K$ be an expansion of an ordered field $\<K,<,+,\cdot\>$.
We say $\K$ is \textbf{definably complete} if every bounded subset of $K$
definable in $\K$ has a supremum in $K$. Such structures were first studied by
Miller in \cite{ivp}.
A definably complete expansion of ordered field is
always real closed. The topology considered here is the usual order topology
on $K$ and the product topology on~$K^n$; all rings are taken with~$1$.

The following dichotomy is the main result of the paper.

\begin{thmA} Let $\K$ be definably complete. Then either
\begin{itemize}
\item[(I)] $f(D)$ is nowhere dense for every definable discrete set $D\subseteq K^n$ and every definable function $f: K^n \to K$, or
\item [(II)] $\K$ defines a discrete subring.
\end{itemize}
\end{thmA}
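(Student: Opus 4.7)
The plan is to argue the contrapositive: assume (I) fails and construct a discrete subring of $K$. Suppose $D\subseteq K$ is definable and discrete and $f\colon K\to K$ is definable with $f(D)$ not nowhere dense. After an affine change of coordinates and replacing $D$ by $D\cap f^{-1}((0,1))$, we may assume $f(D)\subseteq(0,1)$ is dense in $(0,1)$.

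First I would harvest the order structure of $D$. Since $\K$ is definably complete, any definable discrete subset of $K$ is closed, and any bounded closed definable discrete set is finite. Because $f(D)$ is infinite, $D$ must be unbounded, and closed discreteness of $D$ yields a definable successor function $s\colon D\to D$; thus $(D,<,s)$ is, up to definable order isomorphism, a copy of $\N$ or $\Z$ inside $K$. The goal is to upgrade this order-theoretic $\N$-structure to a ring-theoretic discrete subring of $K$.

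Second, I would bridge between $D$ and the arithmetic of $K$ using the density of $f(D)$. Definable completeness supplies a ``snap-down'' map $\pi\colon(0,1)\to \overline{f(D)}$ defined by $\pi(y):=\sup\bigl(f(D)\cap(-\infty,y]\bigr)$, whose image is dense. Using $\pi$ together with a definable section of $f$, each operation of $K$ (addition, multiplication) transports to a definable binary operation on $D$ modulo snapping error. The iterates of $s$ starting from a distinguished element of $D$ form an orbit; combined with these transported operations on $D$, a candidate discrete subring $R\subseteq K$ arises as the image of this orbit under $f$, possibly after an affine pullback.

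The main obstacle is to ensure simultaneously that $R$ is closed under the \emph{genuine} ring operations of $K$ (not merely their snapped surrogates) and that $R$ is discrete in $K$. Any accumulation point of $R$ in $K$ should pull back via $\pi$ and $f$ to a would-be accumulation point of $D$, contradicting discreteness of $D$; closure of $R$ under the true $+$ and $\cdot$ of $K$ should follow from showing that the snapping error vanishes on $R$. Coordinating these two arguments — producing one definable construction that secures both at once — is where I expect the technical heart of the proof to lie.
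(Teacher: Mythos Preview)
Your proposal rests on a false premise: in a general definably complete expansion of an ordered field, a bounded closed discrete definable set need \emph{not} be finite. That implication holds when $K=\R$, but fails as soon as the underlying field is non-archimedean. For a concrete counterexample, take any proper elementary extension of $(\R,+,\cdot,\Z)$: it is definably complete (definable completeness is first-order), and for nonstandard $N$ the set $\Z\cap[0,N]$ is bounded, closed, discrete, definable, and infinite. The paper isolates such sets under the name \emph{pseudo-finite} precisely because they behave like finite sets in some respects without being finite. Once this claim goes, so does your inference that $(D,<,s)$ is order-isomorphic to $\N$ or $\Z$, and the remainder of the outline has nothing to stand on. (Incidentally, your preliminary claim that definable discrete sets are automatically closed is also false: $\{1/n:n\in\N\}$ is discrete but not closed in any expansion of $\R$ defining $\N$.)

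This is not a detail to be patched within your framework; it is the entire difficulty separating Theorem~A from its predecessor over $\R$ in \cite{discrete}. The paper's argument goes in a quite different direction. It introduces \emph{natural fragments}: definable closed discrete sets containing $0$ in which consecutive elements differ by exactly~$1$. An unbounded natural fragment is automatically closed under $+$ and $\cdot$ by a minimal-counterexample induction (Proposition~\ref{nat1}), so it suffices to produce one. To do this the paper defines, for each $\varepsilon>0$, the notion of an $\varepsilon$-natural fragment up to a given height, and shows how to \emph{extract} a genuine natural fragment as a limit from a definable family of $\varepsilon$-approximate ones. The density of $f(D)$ enters through best approximations: for suitably chosen $a,b\notin f(D)$ and $d\in D$, the ratios $g(l_{b,e},b,r_{b,e})=(r_{b,e}-l_{b,e})/(b-l_{b,e})$ as $e$ ranges over $L_{a,d}$ form an $\varepsilon$-natural fragment, and one argues inductively that the extracted fragment is unbounded. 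Your snapping map $\pi$ and transported operations on $D$ offer no mechanism for producing a subset of $K$ with step exactly~$1$, which is what ultimately forces closure under the genuine ring operations.
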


This result is a generalization of \cite[Theorem 1.1]{discrete} from
expansions of the real field to arbitrary definably complete expansions of
ordered fields. The two cases in Theorem~A are indeed exclusive. It is easy to
check that a definable  subring has to be unbounded and that its set
of quotients is dense in~$K$. By definable completeness, the positive elements
of a definable discrete subring of $\K$ form a model of first-order Peano
arithmetic; in \S\ref{sec:Peano} we will see that they even form a model of
second-order Peano arithmetic (seen as a first-order theory).
Hence Theorem~A
separates the class of definably complete expansions of ordered fields into
two very distinct categories.

\medskip

\noindent The significance of Theorem~A comes from its use as a tool to prove
statements about arbitrary definably complete expansions of ordered fields. In
order to show that a statement holds for all such structures, it is now enough
to consider structures having either property (I) or (II) from Theorem A. In
the case when a discrete subring is definable, proofs from second-order
arithmetic often transfer easily to these structures.
On the other hand, if a structure satisfies property
(I), techniques and ideas from the study of o-minimality and related tameness notions can sometimes be applied. As an application of this new proof strategy we present the following definable analogue of Lebesgue’s differentiation theorem, answering a question of Miller from \cite{ivp}.

\begin{thmB}
Let $\K$ be definably complete and let $f: K \to K$ be definable and monotone.
Then $f$ is differentiable on a dense subset of $K$.
\end{thmB}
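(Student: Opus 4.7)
The plan is to apply Theorem~A and handle the two cases separately, with the real work in case~(I).

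In case~(II), $\K$ defines a discrete subring $R$ whose positive part is a model of second-order Peano arithmetic. Inside $\K$ we therefore have access to $R$-indexed sequences (coded by elements of $K$) and to arguments up to the strength of second-order arithmetic. We appeal to the classical proof that a monotone function is differentiable on a dense set (for instance the Riesz rising-sun argument), which formalizes in this framework.

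In case~(I), we argue by contradiction. Suppose $f$ is nowhere differentiable on some open interval $I \subseteq K$; without loss of generality $f$ is nondecreasing on $I$. The Dini derivatives $D^+f, D_+f, D^-f, D_-f$ are definable on $I$, and at each $x \in I$ at least two of them differ (or one is infinite). For each ordered pair $(P,Q)$ of Dini derivatives and each $\varepsilon > 0$, set $E^{PQ}_\varepsilon := \{x \in I : P(x) \geq Q(x) + \varepsilon\}$; each is definable, and their union over all pairs and all $\varepsilon > 0$ is $I$. Using definable completeness together with hypothesis~(I) (which rules out pathological stratifications, in the spirit of Baire category for tame structures), we isolate a pair $(P,Q)$, a threshold $\varepsilon_0 > 0$, and a subinterval $J \subseteq I$ on which $E^{PQ}_{\varepsilon_0}$ is dense.

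On $J$ we convert the uniform gap $\varepsilon_0$ into a contradiction via a definable Vitali-style selection: at each point of $E^{PQ}_{\varepsilon_0}$ we pick a definable witness interval on which the secant slope exceeds a target, and extract a maximal disjoint subfamily using DC, so that the set of left endpoints is a definable discrete subset $D \subseteq J$. Monotonicity of $f$ and boundedness of $f$ on $J$ then force the image $f(D)$, or a closely related definable image, to be dense in a subinterval of $f(J)$, contradicting~(I). The main obstacle is the Vitali extraction: the classical argument uses countable choice, and here it must be carried out with only DC and the tameness supplied by~(I), keeping the resulting selector set both definable and discrete.
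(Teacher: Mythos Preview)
Your overall strategy---split via Theorem~A and handle the two cases separately---matches the paper, and your sketch for case~(II) is in the right spirit: the paper develops recursion (G\"odel $\beta$, coding of sequences) and a definable Lebesgue measure, then runs the Riesz rising-sun argument together with Rubel's reduction for the discontinuous case. That part needs real work (sigma-additivity, commutativity of definable sums, etc.), but the outline is sound.

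The gap is in case~(I). Your plan is to produce, via a Vitali-style selection, a definable discrete set $D \subseteq J$ and then argue that $f(D)$ (or some related definable image) is dense in a subinterval, contradicting~(I). But you give no mechanism by which density of the image could arise. If $f$ is monotone, then $f$ restricted to a discrete ordered set $D$ produces values in the same order; between consecutive images $f(d)$ and $f(s_D(d))$ lies the entire interval $f\bigl([d,s_D(d)]\bigr)$ (minus at most a jump), so $f(D)$ will be nowhere dense in $f(J)$ essentially for free, not dense. The Vitali-type extraction you describe would, at best, show that the selected intervals have large total length, which is a measure-theoretic statement---precisely what is unavailable in the restrained case. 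You flag the extraction as ``the main obstacle,'' but the deeper problem is that even a successful extraction does not yield the contradiction you claim.

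The paper's route in case~(I) is completely different and avoids measure entirely. It shows that the upper right Dini derivative $\Lambda_r f$ is of definable Baire class~$2$, proves (using~(I)) that in a restrained structure every definably meager set is nowhere dense, and hence that every function of definable Baire class~$n$ is continuous off a nowhere dense set. Then continuity of $\Lambda_r f$ on a dense open set, together with an elementary lemma (if $\Lambda_r f$ is continuous then $f' = \Lambda_r f$), gives that $f$ is $\mathcal C^1$ on a dense open set. The key idea you are missing is this Baire-class machinery: property~(I) is used not to contradict a dense image directly, but to upgrade ``definably meager'' to ``nowhere dense,'' which is what makes the category argument go through.
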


\subsection*{Notation}  For the rest of the paper, let $\K$ denote a definable
complete expansion of an ordered field $\<K,<,+,\cdot\>$. We say a set is
definable if it is definable in $\K$ with parameters from~$K$.
$\pair{a,b}$ is the ordered pair with elements $a$ and~$b$.
Given a subset $X$ of
$K^{n}\times K^m$ and $a \in K^n$, we denote the set $\{b : \pair{a,b} \in X\}$
by~$X_a$.
As said before, all rings are taken with~$1$.

\subsection*{Acknowledgements} The authors would like to thank Lou van den Dries and the anonymous referees for closely reading the paper and for their valuable comments.

\section{Facts about definable complete fields}

In this section we recall several facts about definably complete expansions of ordered fields. For more details and background, see \cite{ivp}. The following fact is immediate from definable completeness.

\begin{fact}\label{unbounded} Let $Y \subseteq K$ be non-empty closed and definable. Then $Y$ contains a minimum (a maximum) iff $Y$ is bounded from below (from above).
\end{fact}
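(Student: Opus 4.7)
The plan is to prove the nontrivial direction: if $Y$ is nonempty, closed, definable, and bounded below, then $Y$ has a minimum (the maximum case follows by a symmetric argument, applied either directly or by replacing $Y$ with $-Y$).

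First I would apply definable completeness to produce a candidate minimum. The set $L = \{ x \in K : x \le y \text{ for all } y \in Y \}$ of lower bounds of $Y$ is definable; it is nonempty by hypothesis, and it is bounded above by any element of $Y$. By definable completeness applied to $L$, the supremum $s = \sup L = \inf Y$ exists in $K$.

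Next I would show $s \in Y$, using that $Y$ is closed. For any $\eps > 0$, the point $s + \eps$ fails to be a lower bound of $Y$, so there exists $y \in Y$ with $y < s + \eps$; and $y \ge s$ since $s$ is a lower bound. Hence every neighbourhood $(s - \eps, s + \eps)$ of $s$ meets $Y$, i.e.\ $s$ lies in the closure of $Y$, which equals $Y$. Thus $s$ is a lower bound of $Y$ that belongs to $Y$, so it is the minimum.

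For the remaining directions, note that if $Y$ has a minimum $m$, then $m$ itself is a lower bound, so $Y$ is bounded below; the maximum/bounded-above case is dual. I do not expect any real obstacle here: the only subtlety is that the definition of definable completeness is stated for subsets bounded above, so I must pass to the set of lower bounds (or equivalently work with $-Y$) rather than attempt to take an infimum directly.
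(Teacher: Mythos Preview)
Your argument is correct and is precisely the routine verification one would give; the paper itself offers no proof, merely stating that the fact is ``immediate from definable completeness.'' The one step you leave implicit---that $s=\sup L$ is itself a lower bound of $Y$ (since every $y\in Y$ is an upper bound of $L$)---is standard and poses no difficulty.
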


\begin{fact}[$\textrm{\cite[Lemma 1.9]{ivp}}$]\label{closedunion} Let $Y\subseteq K^2$ be definable such that $Y_a$ is closed and bounded and $Y_a \supseteq Y_b\neq \emptyset$ for every $a,b \in K$ with $a<b$. Then $\bigcap_{a\in K} Y_a\neq \emptyset$.
\end{fact}

\begin{defn}\label{def:successor}
Let $D \subseteq K$ be definable, closed and discrete and let $d \in D$. If $d$ is not the maximum of $D$, we say the minimum of $D_{>d}$ is the \textbf{successor of $d$ in $D$}, written $s_D(d)$.
\end{defn}
\noindent Note that the minimum in the previous definition exists by Fact \ref{unbounded}.

\begin{fact}\label{minmax} Let $D \subseteq K$ be definable, closed and
discrete.
If $D$ has a minimum (a maximum), so has every definable subset of $D$.
\end{fact}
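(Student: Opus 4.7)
The plan is to reduce the closedness claim to discreteness of $D$, and then invoke Fact~\ref{unbounded} for the minimum/maximum corollary. Let $E \subseteq D$ be a definable subset.

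\textbf{Discreteness of $E$.} This is essentially free. Fix $e \in E$. Since $D$ is discrete, there is an open interval $(a,b)$ containing $e$ with $(a,b) \cap D = \{e\}$. Because $E \subseteq D$, also $(a,b) \cap E = \{e\}$, so $e$ is isolated in $E$.

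\textbf{Closedness of $E$.} I would show that the complement is open. Suppose $x \in K \setminus E$; I split cases according to whether $x \in D$. If $x \notin D$, then since $D$ is closed, there is an open interval around $x$ disjoint from $D$, and a fortiori from $E$. If $x \in D$, then discreteness of $D$ provides an open interval $(a,b)$ with $x \in (a,b)$ and $(a,b) \cap D = \{x\}$; since $E \subseteq D$ and $x \notin E$, this interval is disjoint from $E$. Either way, $x$ has an open neighborhood avoiding $E$, so $E$ is closed.

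\textbf{Minimum and maximum.} Suppose $D$ has a minimum $d_0$, so in particular $D$ is bounded below by $d_0$. Any nonempty definable subset $E \subseteq D$ is then also bounded below by $d_0$, and by the previous step $E$ is closed. Fact~\ref{unbounded} now yields a minimum of $E$. The argument for maxima is symmetric.

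There is no serious obstacle here; the only subtle point is handling the closedness of $E$ by carefully distinguishing the cases $x \in D$ and $x \notin D$, so that both hypotheses on $D$ (closed \emph{and} discrete) get used in the right place.
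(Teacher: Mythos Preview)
Your proof is correct. The paper itself states this result as a \emph{Fact} without proof, treating it as a routine consequence of the definitions together with Fact~\ref{unbounded}; your write-up supplies exactly the elementary argument one would expect, and there is nothing to compare against.
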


\begin{defn} A subset $A \subseteq K^n$ is called \textbf{pseudo-finite} if it is definable, closed, bounded and discrete. We call $A$ \textbf{at most pseudo-enumerable} if there exists a definable closed discrete set
$D \subset K_{\geq 0}$ and a definable function $f: D \to K^n$ such that $f(D) = A$.
\end{defn}

\noindent The notion of a pseudo-finite set was introduced in \cite{local} and the notion of \pn in \cite{enum}.

\begin{fact}[$\textrm{\cite[Main Theorem]{enum}}$]\label{nointerior}
If $A \subseteq K^n$ is \pn, then it has no interior.
\end{fact}

\begin{fact}[$\textrm{\cite[Lemma 2.22]{local}}$]\label{imagepf} Let $D\subseteq K^n$ be pseudo-finite and let $f : D \to K^m$ be a definable function. Then $f(D)$ is pseudo-finite. In particular, $f$ achieves a minimum and a maximum on $D$.
\end{fact}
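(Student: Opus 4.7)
My approach is to prove Fact~\ref{imagepf} by a successor-style induction on $D$, after two reductions. First, I would reduce the codomain dimension to $m=1$: if each coordinate $\pi_i\circ f\colon D\to K$ has pseudo-finite image $A_i$, then $f(D)$ is a definable subset of $A_1\times\cdots\times A_m$, which is closed, bounded, and discrete in $K^m$ (the three properties all pass through finite products). Any definable subset of a closed discrete bounded set in $K^m$ is itself pseudo-finite, since a limit point of the subset would be a limit point of the ambient set, of which there are none.

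For the base case $n=m=1$, the plan is to track pseudo-finiteness of the partial image as $d$ sweeps through $D$. Let
\[
B \;=\; \{\, d \in D : f(D \cap [\min D, d]) \text{ is pseudo-finite in } K \,\}.
\]
This set is definable and contains $\min D$. I would argue by contradiction: suppose $D \setminus B \neq \emptyset$. By Fact~\ref{minmax} it is closed and discrete, and being a subset of $D$ it is bounded, so by Fact~\ref{unbounded} it has a minimum $d_0$, necessarily strictly greater than $\min D$. The set $D \cap (-\infty, d_0)$ is then pseudo-finite with a maximum $d'$, and $s_D(d') = d_0$. But
\[
f(D \cap [\min D, d_0]) \;=\; f(D \cap [\min D, d']) \cup \{ f(d_0) \}
\]
is a union of a pseudo-finite set with a singleton, hence pseudo-finite. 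This contradicts $d_0 \notin B$, so $B = D$, and taking $d = \max D$ gives the claim.

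The hard part will be extending the base case to $n > 1$. The natural strategy---project $D$ to $K$ via $\pi_1$, apply the $n=1$ case, and induct on dimension---requires that $\pi_1(D) \subseteq K$ already be pseudo-finite, which is itself a special case of the full Fact~\ref{imagepf}. I would resolve this by induction on $n$: assuming Fact~\ref{imagepf} for pseudo-finite sets in $K^{n'}$ with $n' < n$, one first shows by a separate direct argument that $\pi_1(D) \subseteq K$ is pseudo-finite, using definable completeness to rule out a limit point of $\pi_1(D)$ outside of it via an analysis of the fibers of $\pi_1$ restricted to $D$, which lie in $K^{n-1}$ and are handled by the inductive hypothesis. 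This equips $D$ with a definable lex-minimum and lex-successor, after which the successor induction from the base case goes through unchanged, with $[\min D, d]$ interpreted lexicographically. The technical heart of the argument is this projection step.
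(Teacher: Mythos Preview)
The paper does not give its own proof of this statement: it is quoted verbatim as a fact from \cite[Lemma~2.22]{local}. So there is no in-paper argument to compare against, but I can comment on your proposal and contrast it with the standard route.

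Your reduction to $m=1$ is fine, and the successor induction for $n=m=1$ is correct. The gap is in the $n>1$ step. You propose to show that $\pi_1(D)\subseteq K$ is pseudo-finite via ``an analysis of the fibers \dots\ handled by the inductive hypothesis,'' but you do not explain how fibre information rules out an accumulation point of $\pi_1(D)$. Knowing that each fibre $D_x\subseteq K^{n-1}$ is pseudo-finite (hence has a lex-minimum, by the inductive hypothesis) gives you a definable section $\sigma\colon\pi_1(D)\to D$, but this does not by itself force $\pi_1(D)$ to be discrete: as $x$ approaches a putative limit point $y$, the sections $\sigma(x)$ may wander in the last $n-1$ coordinates, so discreteness of $D$ is not visibly violated. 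Your phrasing also only addresses limit points ``outside'' $\pi_1(D)$, i.e.\ closedness, and says nothing about discreteness. Without a lex-minimum already available in $K^n$ (which is what you are trying to establish), I do not see how to close this circle using only the fibre hypothesis.

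The argument that actually drives the result in \cite{local} is definable compactness: in a definably complete field, any definable nested family $(C_\eps)_{\eps>0}$ of nonempty closed subsets of a closed bounded definable set in $K^n$ has nonempty intersection. Since every subset of a closed discrete set is closed, one may take $C_\eps=\{d\in D:\abs{f(d)-y}\leq\eps\}$ to show $f(D)$ is closed, $C_\eps=\{d\in D:0<\abs{f(d)-y}\leq\eps\}$ to show it is discrete, and $C_M=\{d\in D:\abs{f(d)}\geq M\}$ to show it is bounded. This handles all $n$ and $m$ simultaneously, with no induction and no successor bookkeeping; it is both shorter and avoids the projection difficulty entirely.
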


\begin{fact}[$\textrm{\cite[Lemma 4.14]{enum}}$]\label{pseudo} Every definable
discrete subset of $K^n$ is \pn.
\end{fact}

\noindent Fact \ref{pseudo} simplifies our task to prove Theorem A
considerably.
To establish Theorem~A, it is now enough to show that whenever $\K$ defines a
closed and discrete set $D\subseteq K_{\geq 0}$ and a function $f:D \to K$
with $f(D)$ somewhere dense, then $\K$ defines a discrete subring.

\begin{defn}
A definable family $(X_t: t \in D)$ is \textbf{\pn} if its index set $D$ is \pn.
\end{defn}

\noindent The following fact was implicitly proved in \cite{enum}. For the reader's convenience, we have included a proof here.

\begin{fact}\label{lem:discrete-union}
\begin{itemize}
\item[(1)] The union of an \pn family of discrete sets is \pn.
\item[(2)] Let $(X_t: t \in K)$ be a definable increasing family of discrete
subsets of $K^n$.  Then $\bigcup_{t\in K} X_t$ is \pn.
\end{itemize}
\end{fact}
\begin{proof} Statement (1) is \cite[Corollary 4.16]{enum}.
We now consider (2). By \cite[Theorem 3.3]{local}, $\K$ either defines a discrete, closed
and unbounded set, or every discrete set definable in $\K$ is pseudo-finite.
We now handle the cases separately. If every discrete definable set in $\K$ is pseudo-finite, then each $X_t$ is pseudo-finite. By \cite[Theorem 3.3]{local}
$\bigcup_{t\in K} X_t$ itself is pseudo-finite. Now suppose that there exists $D \subseteq K_{\geq 0}$ definable, discrete, closed
and unbounded. Since $(X_t: t \in K)$ is increasing and
$D$ is unbounded, $\bigcup_{t\in K} X_t = \bigcup_{t \in D} X_t$. By (1) applied to the family~ $(X_t: t \in D)$, $\bigcup_{t\in K} X_t$ is \pn.
\end{proof}



\section{Natural fragments and asymptotic extraction}

In this section we generalize the idea of asymptotic extraction, first introduced by Miller in \cite[p. 1484]{proj}, to definably complete fields. Since the original approach is not strong enough to yield the desired results, we adjust the method developed in \cite[Lemma 1]{countable} to extract larger and larger fragments of the natural numbers.
\begin{defn} Let $D$  be a definable, closed and discrete subset of $K_{\geq 0}$. We say
that $D$ has step $1$ if, for every $d \in D$ with $d \neq \max(D)$,
$s_D(d) = d+ 1$. We say that $D$ is a
\textbf{natural fragment} if it is either empty, or if $D$ has step 1 and  $0 \in D$.
\end{defn}

\begin{lem}\label{nfcomp} Let $D$ and $E$ be natural fragments. Then either $D\subseteq E$ or $E\subseteq D$.
\end{lem}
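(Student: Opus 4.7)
The plan is to reduce to the nontrivial case where both $D$ and $E$ are nonempty, suppose for contradiction that neither is contained in the other, extract minimal witnesses of this failure in each direction, and play the two step-$1$ conditions off against each other to derive a contradiction.

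If $D$ or $E$ is empty the conclusion is immediate, so assume both are nonempty; then $0\in D\cap E$, and I read ``natural fragment'' so that $0=\min D=\min E$ (without such a convention the lemma fails, e.g.\ for $D=\{-1,0,1\}$ and $E=\{0,1,2\}$). Under this reading, every nonempty definable subset of $D$ or $E$ is bounded below and hence, by Facts \ref{minmax} and \ref{unbounded}, admits both a minimum and a maximum. Suppose for contradiction that $D\setminus E$ and $E\setminus D$ are both nonempty, and set
\[
d^*:=\min(D\setminus E),\qquad e^*:=\min(E\setminus D).
\]
Both are strictly positive and distinct, so after swapping $D$ and $E$ if necessary I may assume $d^*<e^*$.

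Let $d':=\max\{x\in D:x<d^*\}$, which exists because the set in question contains $0$ and is bounded above. By the minimality of $d^*$, $d'\in E$; the step-$1$ condition on $D$ forces $s_D(d')=d^*$, so $d^*=d'+1$. Since $d'<d^*<e^*$ and $e^*\in E$, $d'$ is not the maximum of $E$, so $s_E(d')$ is defined, and the step-$1$ condition on $E$ yields $s_E(d')=d'+1=d^*$. Therefore $d^*\in E$, contradicting $d^*\in D\setminus E$.

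The main obstacle is simply setting up the argument symmetrically so that step~$1$ can be invoked in both $D$ and $E$; once $d^*<e^*$ is arranged, the existence of $e^*\in E$ above $d'$ prevents $d'$ from being the maximum of $E$, and the contradiction falls out in one application of step~$1$.
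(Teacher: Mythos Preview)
Your proof is correct and follows essentially the same strategy as the paper's: both locate a minimal element of the symmetric difference (the paper takes $\min\bigl((D\setminus E)\cup(E\setminus D)\bigr)$ directly, you take the two minima separately and compare) and then use the step-$1$ condition at the predecessor to force that element into the other set. Your explicit remark that the definition must be read with $0=\min D$ is well taken; the paper's own argument silently relies on the same convention when it asserts $d>0$ and $d-1\in D$.
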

\begin{proof} Suppose not. Let $d = \min(D \setminus E \cup E \setminus D)$. Without loss of generality, assume $d \in D$. Since $0 \in D \cap E$, $d>0$. Since $D$ is a natural fragment, $d-1 \in D$. Since $d$ was chosen to be minimal, $d-1 \in E$ as well. Since $d \notin E$, $d-1$ has to be the maximum of $E$. Since $D \cap [0,d-1]= E \cap[0,d-1]$ by minimality of $d$, we have $E \subseteq D$.
\end{proof}

\begin{cor} \label{nfcor} Let $(X_t : t \in I)$ be a definable family such that $X_t$ is a natural fragment for each $t \in I$. Then $\bigcup_{t\in I} X_t$ is a natural fragment.
\end{cor}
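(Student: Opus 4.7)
The plan is to verify each clause of the definition of natural fragment directly for $U := \bigcup_{t\in I} X_t$. Definability is clear, and if $U$ is nonempty then some $X_t$ is nonempty and must contain $0$, so $0 \in U$. What remains is to show $U$ is closed, discrete, and of step $1$.

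The workhorse I would extract first is a spacing statement: any two distinct $d, d' \in U$ satisfy $|d-d'| \geq 1$. This falls straight out of Lemma~\ref{nfcomp}: picking $X_t \ni d$ and $X_{t'} \ni d'$, one of the two fragments contains the other, so $d$ and $d'$ lie in a common natural fragment, which has step $1$ and minimum $0$.

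From the spacing statement, discreteness of $U$ is immediate, as each $d \in U$ is isolated in $U$ by $(d-1, d+1)$. The step-$1$ property is equally quick: if $d \in U$ is not the maximum of $U$, pick $d' \in U$ with $d' > d$, apply Lemma~\ref{nfcomp} to the fragments $X_t \ni d$ and $X_{t'} \ni d'$ to obtain a single fragment $X$ containing both, and observe that in $X$ the element $d$ is not the maximum, so $d+1 = s_X(d) \in X \subseteq U$.

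The step I expect to require the most care is closedness. My plan there is a proof by contradiction: if $a \in \overline{U} \setminus U$, first pick $d \in U$ with $\delta := |d-a| \in (0, 1/4)$, and then, using that $a$ is still a limit point of $U$, pick $d' \in U$ with $|d'-a| < \delta/2$. The triangle inequality gives $|d-d'| < 3\delta/2 < 1$, so spacing forces $d = d'$; but $|d'-a| < \delta = |d-a|$ forces $d \neq d'$, a contradiction. Once this is in place the corollary is complete; no other step looks to me like it presents any real obstacle beyond a direct application of the spacing observation and Lemma~\ref{nfcomp}.
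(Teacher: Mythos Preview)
Your argument is correct and is exactly the natural elaboration of what the paper leaves implicit: the corollary is stated without proof, as an immediate consequence of Lemma~\ref{nfcomp}, and your spacing observation (distinct elements of $U$ lie at distance $\geq 1$) is precisely how one cashes that lemma out to get discreteness, closedness, and step~$1$ for the union. One cosmetic remark: since the successor $s_U(d)$ in Definition~\ref{def:successor} is only introduced for closed discrete $D$, it is cleanest to establish closedness before formally verifying step~$1$; your argument already contains everything needed, so this is purely a matter of ordering the write-up.
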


\noindent It is worth noting that by Lemma \ref{nfcomp} the union of all natural fragments, although not necessarily definable, is closed, discrete, contains $0$ and has step $1$.

\begin{defn} Let $D$ be a definable, closed and discrete subset of $K_{\geq 0}$ and $\varepsilon \in K_{>0}$. We say that $D$ is an \textbf{$\varepsilon$-natural fragment} if
\begin{itemize}
\item [(1)] $|s_{D}(d) - (d + 1)| < \varepsilon$ for every $d \in D$ with $d\neq \max D$,
\item [(2)] $\dist(D,0) < \varepsilon$.
\end{itemize}
For $a \in K_{\geq 0}$, we say $D$ is an \textbf{$\varepsilon$-natural fragment close to $a$} if $\dist(D,a) <\varepsilon$.
\end{defn}

\noindent The next Lemma shows that the property of being an $\varepsilon$-natural fragment for some $\varepsilon$ is preserved under small changes.

\begin{lem}\label{enatfrag} Let $\varepsilon \in K_{>0}$ with $\varepsilon< \frac{1}{4}$, let $D$ be a $\varepsilon$-natural fragment close to $a$ and let $f: D \to (-\varepsilon,\varepsilon)$ be a definable function. Then
$$
E:=\{ d + f(d) \ : \ d \in D\}
$$
is a $3\varepsilon$-natural fragment close to $a$.
\end{lem}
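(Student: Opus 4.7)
The plan is to show that $g : D \to E$ defined by $g(d) := d + f(d)$ is a strictly increasing definable bijection onto $E$, and then translate the $\varepsilon$-natural-fragment estimates on $D$ into $3\varepsilon$-estimates on $E$.

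First I would verify that $E$ is pseudo-finite: since $g$ is a definable function on the pseudo-finite set $D$, Fact \ref{imagepf} gives that $E = g(D)$ is pseudo-finite, so it makes sense to talk about $s_E$.

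Next comes the key observation, which uses $\varepsilon < 1/4$. For any two distinct $d_1 < d_2$ in $D$, either they are consecutive, in which case $d_2 - d_1 > 1 - \varepsilon$, or there is some element of $D$ between them and hence $d_2 - d_1 > 2(1-\varepsilon)$. In either case $d_2 - d_1 > 1 - \varepsilon$. On the other hand $|f(d_2) - f(d_1)| < 2\varepsilon$, so
$$
g(d_2) - g(d_1) \; = \; (d_2 - d_1) + (f(d_2) - f(d_1)) \; > \; 1 - 3\varepsilon \; > \; 0.
$$
Hence $g$ is strictly increasing on $D$. In particular $g$ is a bijection $D \to E$ and $s_E(g(d)) = g(s_D(d))$ for each $d \in D$ with $d \neq \max D$.

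To check condition (1) of the definition, let $e = g(d) \in E$ with $e \neq \max E$, so $d \neq \max D$. Then
$$
\bigl|\,s_E(e) - e - 1\,\bigr| \; = \; \bigl|\,(s_D(d) - d - 1) + (f(s_D(d)) - f(d))\,\bigr| \; < \; \varepsilon + 2\varepsilon \; = \; 3\varepsilon,
$$
using $(1)$ for $D$ and $|f| < \varepsilon$. For condition $(2)$, pick $d_0 \in D$ with $|d_0| < \varepsilon$ and $d_a \in D$ with $|d_a - a| < \varepsilon$; then $|g(d_0)| < 2\varepsilon$ and $|g(d_a) - a| < 2\varepsilon$, so $\dist(E,0), \dist(E,a) < 2\varepsilon < 3\varepsilon$.

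The only real obstacle is the monotonicity argument, which is why the hypothesis $\varepsilon < 1/4$ is needed: it ensures that the gaps $\geq 1 - \varepsilon$ between elements of $D$ genuinely exceed the maximal oscillation $2\varepsilon$ of $f$, so that $g$ cannot collapse or swap consecutive points and the successor structure of $D$ transfers cleanly to $E$.
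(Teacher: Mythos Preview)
Your proof is correct and follows essentially the same approach as the paper: define $g(d) = d + f(d)$, show that $g$ preserves the successor structure so that $s_E(g(d)) = g(s_D(d))$, and then read off the $3\varepsilon$-estimates. The paper simply asserts this successor-preservation from ``(1) and $\varepsilon < 1/4$'' in one line, whereas you actually justify it via the monotonicity argument (gaps in $D$ exceed $1-\varepsilon$, oscillation of $f$ is below $2\varepsilon$); your version is the same idea, only spelled out more carefully.
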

\begin{proof} Set $g(d) := d+f(d)$ for $d \in D$. It is immediate that (2)
holds for $E$ and $3\varepsilon$, since it holds for $D$ and
$\varepsilon$. Since (1) holds for $D$ and $\varepsilon$ and $\varepsilon <
1/4$, $g(s_{D}(d)) = s_{E}(g(d))$ for every $d\in D$ with $d \neq \max D$. Moreover, for every $d
\in D$ with $d \neq \max D$,
\[
\abs{s_{E}(g(d)) - g(d) - 1}  < 2\varepsilon + \abs{s_D(d)-d-1} < 3\varepsilon.
\]
Hence (1) holds for $E$ and $3\varepsilon$. Hence $E$ is a
$3\varepsilon$-natural fragment close to~$a$.
\end{proof}

\begin{defn} Let $(Y_t : t\in I)$ be a definable family of subsets of~$K$.
The \textbf{natural fragment extracted from $(Y_t : t\in I)$} is the set of $d
\in K_{\geq 0}$ such that for every
$\varepsilon\in K_{>0}$ there exists $t \in I$ such that $Y_t$ is an $\varepsilon$-natural fragment close to~$d$.
\end{defn}

\noindent It is not obvious that the object defined in the previous definition is a natural fragment in sense defined before. The following Lemma establishes that this is indeed the case.

\begin{lem} Let $(Y_t : t\in I)$ be a definable family of subsets of~$K$. Then the natural fragment extracted from $(Y_t : t\in I)$ is a natural fragment.
\end{lem}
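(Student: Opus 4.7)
The set $D$ is definable by its $\forall\exists$ description. Assume $D \neq \emptyset$; we check the defining properties of a natural fragment. First, $0 \in D$: pick any $d_0 \in D$. For $\varepsilon > 0$, any $Y_t$ that is an $\varepsilon$-natural fragment up to $d_0$ already satisfies $\dist(Y_t, 0) < \varepsilon$, hence is also an $\varepsilon$-natural fragment up to $0$. Closedness follows from the triangle inequality: if $d$ lies in the closure of $D$ and $\varepsilon > 0$, pick $d' \in D$ with $|d - d'| < \varepsilon/2$; any $Y_t$ that is $(\varepsilon/2)$-natural fragment up to $d'$ is then $\varepsilon$-natural fragment up to $d$.

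The main work is showing $D$ is discrete with step $1$. The key tool is an \emph{index function}: given $Y_t$ that is an $\varepsilon$-natural fragment up to some $a$, let $\tilde e_0^t \in Y_t$ be the element closest to $0$ and iterate, setting $\tilde e_k^t := s_{Y_t}^k(\tilde e_0^t)$. Induction on condition~(1) gives $|\tilde e_k^t - k| \leq (|k| + 1)\varepsilon$, and the index set $I(Y_t)$ is itself a natural fragment. The element of $Y_t$ within $\varepsilon$ of $a$ has some index $m$ with $|m - a| \leq (|m| + 2)\varepsilon$. To prove discreteness, let $d_1 < d_2$ be distinct points of $D$; for each $\varepsilon > 0$ extract indices $m_\varepsilon, n_\varepsilon$ from witnesses up to $d_1$ and $d_2$ respectively. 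The bound $|(d_2 - d_1) - (n_\varepsilon - m_\varepsilon)| \leq (|m_\varepsilon| + |n_\varepsilon| + 4)\varepsilon$ forces the integer $n_\varepsilon - m_\varepsilon$ into $\{0, 1\}$ for small enough $\varepsilon$. If $n_\varepsilon = m_\varepsilon$ for arbitrarily small $\varepsilon$ we obtain $|d_1 - d_2| \to 0$, contradicting $d_1 \neq d_2$; otherwise $n_\varepsilon - m_\varepsilon = 1$ eventually, forcing $d_2 - d_1 \geq 1$.

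For step~$1$, let $d \in D$ with $d \neq \max D$ and pick $d' \in D_{>d}$, so $d' \geq d + 1$ by discreteness. Treating the representative case $d \geq 0$ (the case $d < 0$ is easier, since the witness for $d$ already contains points up to $0$), for each $\varepsilon > 0$ choose $\varepsilon' < \varepsilon / (2(|d| + 3))$ and pick $Y_{t_0}$ that is $\varepsilon'$-natural fragment up to $d$ (with near-$d$ index $m$) and $Y_{t_2}$ that is $\varepsilon'$-natural fragment up to $d'$ (with near-$d'$ index $n_2$). By Lemma~\ref{nfcomp} the natural fragments $I(Y_{t_0})$ and $I(Y_{t_2})$ are comparable, and the error estimates together with $d' \geq d + 1$ give $n_2 \geq m + 1$, so $m + 1 \in I(Y_{t_2})$. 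Then $e^\star := s_{Y_{t_2}}^{m+1}(\tilde e_0^{t_2}) \in Y_{t_2}$ satisfies $|e^\star - (d+1)| \leq (|m| + 2)\varepsilon' + |m - d| \leq 2(|m| + 2)\varepsilon' < \varepsilon$, so $Y_{t_2}$ is $\varepsilon$-natural fragment up to $d + 1$, giving $d + 1 \in D$. Combined with discreteness, $s_D(d) = d + 1$. The main obstacle is exactly this final argument: a $Y_t$ witnessing $d \in D$ may have its near-$d$ element as its maximum, forcing us to borrow a witness for $d' > d$ and transfer the index $m$ from $I(Y_{t_0})$ into $I(Y_{t_2})$, a transfer justified precisely by Lemma~\ref{nfcomp}.
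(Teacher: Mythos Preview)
Your argument has a genuine gap at the point where you introduce the ``index function.'' You set $\tilde e_k^t := s_{Y_t}^k(\tilde e_0^t)$, but iterating the successor map $k$ times is a recursive definition, and recursion is \emph{not} available in a general definably complete field. To make sense of $s_{Y_t}^k$ you need $k$ to range over some definable set that already carries an induction/recursion principle---essentially a natural fragment or a discrete subring---and producing such an object is exactly what this lemma is working toward. So the assertion ``the index set $I(Y_t)$ is itself a natural fragment'' is not justified, and without it the later steps (speaking of ``the integer $n_\varepsilon - m_\varepsilon$'' and applying Lemma~\ref{nfcomp} to $I(Y_{t_0})$ and $I(Y_{t_2})$) have no meaning. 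Your proof is fine over $\R$, where pseudo-finite means finite and the external natural numbers supply indices; the whole difficulty here is that a pseudo-finite $Y_t$ in $\K$ may fail to admit any definable enumeration.

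The paper sidesteps this circularity by never trying to count elements of $Y_t$. For each $d \in D$ it forms the definable set
\[
E_d := \bigl\{ e \leq d : \forall \varepsilon\, \exists t\ \bigl(Y_t \text{ is an $\varepsilon$-natural fragment up to $d$ and } \dist(Y_t, e) < \varepsilon\bigr)\bigr\},
\]
and shows directly from the gap condition~(1) that $E_d$ is closed under $e \mapsto e \pm 1$ (within $[0,d]$). It then isolates $B := \{e \in E_d : [e, e+1) \cap E_d = \{e\}\}$, verifies that $B$ is a natural fragment, and proves $E_d = B$ by a minimum argument. Since $D = \bigcup_{d \in D} E_d$, Corollary~\ref{nfcor} finishes. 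Every step uses only existence of minima and maxima in definable discrete sets (Facts~\ref{unbounded} and~\ref{minmax}), never a cardinality or an iterate.
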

\begin{proof} Let $D$ be the natural fragment extracted from $(Y_t : t\in I)$. Since the empty set is a natural fragment, we reduce to the case that $D$ is non-empty. It follows easily from the definitions that $0\in D$ whenever $D$ is non-empty.\\

\noindent For $d \in D$ consider the definable set $E_d$ consisting of all $e \in K$ with $e\leq d$ such that for every $\varepsilon\in K_{>0}$ there exists $t \in I$ such that $Y_t$ is an $\varepsilon$-natural fragment close to $d$ and $\dist(Y_t,e)< \varepsilon$. Note that $d \in E_d$ and $E_d \subseteq D$. Hence $\bigcup_{d\in D} E_d = D$. Thus by Corollary \ref{nfcor} it is enough to show that each $E_d$ is a natural fragment.\\

\noindent Let $d \in D$. We first show that $e+1 \in E_d$ for every $e \in
E_d$ with $e\leq d-1$. Let $\varepsilon \in K$ such that $0 < \varepsilon < 1$. Take $t \in I$ such that $Y_t$ is a $\frac{\varepsilon}{2}$-natural fragment close to $d$ and $\dist(Y_t,e)<\frac{\varepsilon}{2}$. Let $y \in Y_t$ be such that $|e-y|<\frac{\varepsilon}{2}$. Since $e \leq d-1$ and $\dist(Y_t,d)<\frac{\varepsilon}{2}$, $y$ is not the maximum of $Y_t$. Then
\begin{align*}
|s_{Y_t}(y) - (e+1)| & = |s_{Y_t}(y) +y - y- (e+1)|\\
 &\leq  |s_{Y_t}(y) -y-1| + |y - e| < \varepsilon.
\end{align*}
Hence $\dist(Y_t,e+1)<\varepsilon$. Thus $e+1 \in E_d$. Similarly, we can show that $e-1 \in E_d$ for every $e \in E_{d}$ with $e\geq 1$.\\

\noindent Consider
\[
B := \{ \ e \in E_d \ : \ [e,e+1) \cap E_d = \{e\} \ \}.
\]
Note that $B$ is closed and discrete and $d \in B$. We will now show that $B$
is a natural fragment. It is easy to see that $0 \in B$. Let $e \in B$ and
suppose $e \leq d-1$. Then $e+1 \in E_d$. Towards a contradiction assume $e+1
\notin B$.
Then there is $l\in E_d$ such that $e+1 < l < e+2$.
Since $l \geq 1$ and $l \in E_d$,
we have $l-1 \in E_d$ with $e < l-1 <e+1$. Hence $e \notin B$, a contradiction.\\

\noindent It is left to show that $E_d=B$. Towards a contradiction suppose there is $e \in E_d \setminus B$. By Fact \ref{minmax} there is a maximal $l \in B$ smaller than $e$. Since $l<d$, $l+1 \in B$. Since $l \in B$ and $e \notin B$, $l+1<e$. A contradiction against the maximality of~$l$. Hence $E_d = B$.
\end{proof}

It is worth pointing out that until this point only the additive structure of $\K$ has been used.

\begin{prop}\label{nat1} Let $D$ be an unbounded natural fragment. Then
$\<D,+,\cdot,<\>$ is a model of first-order Peano arithmetic.
Moreover, $D \cup -D$ is a definable discrete subring of~$\K$.
\end{prop}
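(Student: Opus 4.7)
First I would reduce to the case $D \subseteq K_{\geq 0}$: the set $D \cap K_{\geq 0}$ is itself an unbounded natural fragment (closed, discrete, contains $0$, inherits step $1$, and the negative part of $D$ is bounded above by $0$ so is irrelevant to unboundedness), so I may replace $D$ by it. The workhorse is then a \emph{predecessor property}: for every $d \in D$ with $d > 0$, one has $d - 1 \in D$. Indeed, $d$ is isolated in $D$, so $D \cap [0, d)$ is closed in $K$, and being discrete and bounded it is pseudo-finite; by Fact \ref{imagepf} it has a maximum $d'$, and the step $1$ condition forces $s_D(d') = d' + 1 = d$.

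With this in hand, closure under $+$, closure under $\cdot$, and the induction scheme all follow from a common ``induction-by-minimum'' template. For addition, fix $a \in D$ and set $D_a := \{b \in D : a + b \in D\}$. One checks that $D_a$ is a natural fragment, and that it is unbounded (otherwise its maximum $m$ would satisfy $(m+1) + a = (m+a) + 1 \in D$, so $m + 1 \in D_a$, contradicting maximality). Were $D_a \subsetneq D$, let $m := \min(D \setminus D_a)$, which exists by Fact \ref{minmax}; since $0 \in D_a$ one has $m > 0$, so the predecessor property yields $m - 1 \in D$ and by minimality $m - 1 \in D_a$; but then $a + m = (a + (m-1)) + 1 \in D$ and $m \in D_a$, a contradiction. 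The analogous argument with $M_a := \{b \in D : a \cdot b \in D\}$ (using closure under $+$ for the successor step) gives closure under $\cdot$, and applying the same minimum argument to any definable $S \subseteq D$ containing $0$ and closed under $s_D$ yields $S = D$---this is the induction scheme. Together with the semiring and order identities which $D$ inherits from $K$, this shows $\langle D, +, \cdot, <\rangle \models \mathrm{PA}$. Finally, $R := D \cup (-D)$ is a definable subring of $K$ by a routine sign case analysis (the necessary subtraction within $D$ being supplied by PA, equivalently by iterating the predecessor property), and it is discrete because its nonnegative and nonpositive parts each are, meeting only at $0$.

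The main subtle point is that Lemma \ref{nfcomp} only provides \emph{comparability} of natural fragments, not uniqueness of unbounded ones (witness $\N \subsetneq \Z$), so one cannot directly deduce $D_a = D$ from the mere fact that $D_a$ is an unbounded natural fragment contained in $D$. The predecessor property, together with the reduction to $D \subseteq K_{\geq 0}$, is precisely what converts comparability into equality through the minimum argument.
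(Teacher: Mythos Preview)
Your proof is correct and follows essentially the same minimal-counterexample strategy as the paper, which also obtains closure under $+$ and $\cdot$ by taking a least failure and using $d-1 \in D$. You are more careful on two points the paper leaves tacit: you explicitly establish the predecessor property, and you first reduce to $D \subseteq K_{\geq 0}$ (the paper's argument, and indeed the proof of Lemma~\ref{nfcomp}, silently rely on this, though every natural fragment actually constructed in the paper does lie in $K_{\geq 0}$).
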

\begin{proof} Let $D$ be an unbounded natural fragment. We first show that $D$
is closed under addition. Suppose not. By Fact \ref{minmax} we can take $d\in
D$ minimal such that there is $e \in D$ with $d+e \notin D$.
Clearly, $d \neq 0$.
Since $d$ is minimal, $(d-1)+(e+1) \in D$. A contradiction. Hence $D$ is
closed under addition.

Now suppose that $D$ is not closed under multiplication.
Again take $d\in D$ minimal such that there is $e \in D$ with $d\cdot e \notin
D$. Clearly, $d \neq 0$.
By minimality of~$d$, $(d-1)\cdot e \in D$ and hence $(d-1)\cdot e +e \in
D$. Hence $D$ is closed under multiplication. Since every definable subset of
$D$ has a minimum by Fact \ref{minmax}, $\<D,+,\cdot,<\>$ satisfies the
first-order induction axiom. Hence $\<D,+,\cdot,<\>$ is a model of first-order
Peano arithmetic. 

Now set $Z := D \cup -D$. It follows immediately that $\<Z,+,\cdot\>$ is a discrete subring of~$\K$.
\end{proof}

\section{Best approximations and the proof of Theorem A}

Let $\K$ be a definably complete expansion of an ordered field that defines a closed and discrete set $D\subseteq K_{\geq 0}$ and a function $f:D \to K$ with $f(D)$ somewhere dense. In order to establish Theorem A, it is enough by
Fact \ref{pseudo} to define a discrete subring. By Proposition \ref{nat1} it suffices to define an unbounded natural fragment. After composing $f$ with a semialgebraic function we can assume that $f(D)$ is dense in $(0,1)$. First several definitions related to this function $f$ will be introduced. These definitions were first used  for expansions of $\R$ by Hieronymi and Tychonievich in~\cite{group}.

\begin{defn} Let $c\in (0,1)$. We say $d\in D$ is a \textbf{best approximation of $c$ from the left} if $f(d) < c$ and
$$
f(D_{<d}) \cap \big(f(d),c\big) = \emptyset.
$$
We write $L_c$ for the set of best approximations of $c$ from the left. Similarly, we say $d \in D$ is a \textbf{best approximation of $c$ from the right} if $f(d)>c$ and
$$
f(D_{<d}) \cap \big(c,f(d)\big) = \emptyset.
$$
and write $R_c$ for the set of best approximations of $c$ from the right.\newline
For $d \in D$, we write
$$
L_{c,d} := L_c \cap [0,d] \hbox{ and } R_{c,d} := R_c \cap [0,d],
$$
and
$$
l_{c,d} := \left\{
             \begin{array}{ll}
               f(\max L_{c,d}), & \hbox{if $L_{c,d} \neq \emptyset$;} \\
               0, & \hbox{otherwise,}
             \end{array}
           \right.
 \hbox{ and } r_{c,d} := \left\{
             \begin{array}{ll}
               f(\max R_{c,d}), & \hbox{if $R_{c,d} \neq \emptyset$;} \\
               1, & \hbox{otherwise.}
             \end{array}
           \right.
$$
\end{defn}
\noindent Since $D$ is closed and discrete, both $L_c$ and $R_c$ are closed
and discrete by Fact~\ref{minmax}.  Since $D_{\leq d}$ is pseudo-finite, so is
$f(D_{\leq d})$ by Fact~\ref{imagepf}. Hence both $L_c$ and $R_c$ are
non-empty. It is easy to check that by density of $f(D)$ both $L_c$ and $R_c$
are unbounded and $c = \sup f(L_c) = \inf f(R_c)$ for $c\in (0,1)$. Moreover $L_{c,d}$ and $R_{c,d}$ are pseudo-finite and the maximum used in the above definition actually exists. It also worth pointing out that this implies $l_{c,d} < c < r_{c,d}$.\\

\begin{lem}\label{sameapprox} Let $a \in (0,1) \setminus f(D)$ and $d\in D$. Then $L_{a,d} = L_{b,d}$ and $R_{a,d} = R_{b,d}$ for every $b \in \big(l_{a,d},r_{a,d}\big)$.
\end{lem}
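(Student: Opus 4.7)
The plan is to first give a cleaner description of $l_{a,d}$ and $r_{a,d}$, namely that they are respectively the largest element of $f(D_{\leq d})$ strictly below $a$ and the smallest strictly above. Since $D_{\leq d}$ is pseudo-finite, Fact~\ref{imagepf} and Fact~\ref{minmax} guarantee that these extrema exist; the content of the characterization is that they are actually attained at $\max L_{a,d}$ and $\max R_{a,d}$. I will prove this by contradiction: any $f$-value strictly between $f(\max L_{a,d})$ and $a$ would allow one to extract, as the minimum of the pseudo-finite set $\{e \in D_{\leq d} : e > \max L_{a,d},\ f(\max L_{a,d}) < f(e) < a\}$, a best approximation of $a$ from the left that exceeds $\max L_{a,d}$, a contradiction. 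The argument for $r_{a,d}$ is symmetric.

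Next I will exploit the assumption $a \notin f(D)$ to derive the gap property
\[
f(D_{\leq d}) \cap (l_{a,d}, r_{a,d}) = \emptyset,
\]
since any value in this open interval lying in $f(D_{\leq d})$ must either equal $a$ (impossible) or violate the extremality of $l_{a,d}$ or $r_{a,d}$. From the gap property, for every $b \in (l_{a,d}, r_{a,d})$, the sets $f(D_{\leq d}) \cap (-\infty, a)$ and $f(D_{\leq d}) \cap (-\infty, b)$ coincide, and similarly for the symmetric half-lines on the right, whence $l_{b,d} = l_{a,d}$ and $r_{b,d} = r_{a,d}$ follow at once.

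The final step is to conclude $L_{a,d} = L_{b,d}$ by unpacking the definition of best approximation: for $d^* \in L_{a,d}$, one has $f(d^*) \leq l_{a,d} < b$, and the condition $f(D_{<d^*}) \cap (f(d^*), a) = \emptyset$ transfers to $(f(d^*), b)$ because any $f$-value of some $e \in D_{<d^*}$ lying in $(f(d^*), b)$ is, via the gap property applied to $e \in D_{\leq d}$, forced to satisfy $f(e) \leq l_{a,d} < a$, so $f(e) \in (f(d^*), a)$, contradicting $d^* \in L_a$. The reverse inclusion is proved by the same reshuffling, using that any $f$-value of $D_{\leq d}$ below $a$ is automatically $\leq l_{a,d} < b$, and the equality $R_{a,d} = R_{b,d}$ is handled by the identical argument with left and right interchanged. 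The only really delicate step is the first one, which uses the pseudo-finiteness facts essentially; everything else reduces mechanically to the gap property.
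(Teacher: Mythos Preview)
Your proof is correct and follows the same route as the paper: establish the gap property $f(D_{\leq d}) \cap (l_{a,d}, r_{a,d}) = \emptyset$, then deduce that the predicates $f(e) < a$ and $f(e) < b$ (and their right-sided analogues) agree on $D_{\leq d}$, from which $L_{a,d} = L_{b,d}$ and $R_{a,d} = R_{b,d}$ follow. You are simply more explicit than the paper, which asserts $f(D_{\leq d}) \cap \big((l_{a,d},a) \cup (a,r_{a,d})\big) = \emptyset$ ``by definition'' and then jumps directly to the conclusion; your extraction of a larger best approximation via the minimum of a pseudo-finite set is exactly the argument needed to justify that step.
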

\begin{proof} By definition of $l_{a,d}$ and $r_{a,d}$,
\[
f(D_{\leq d}) \cap (\big(l_{a,d},a\big) \cup \big(a,r_{a,d}\big)) = \emptyset.
\]
Since $a \notin f(D)$, $f(D_{\leq d}) \cap \big(l_{a,d},r_{a,d}\big) = \emptyset$. Hence for all $b \in \big(l_{a,d},r_{a,d}\big)$
\[
\{ e \in D_{\leq d} \ : \ f(e) < b\} = \{ e \in D_{\leq d} \ : \ f(e) < a\}
\]
and
\[
\{ e \in D_{\leq d} \ : \ f(e) > b\} = \{ e \in D_{\leq d} \ : \ f(e) > a\}.
\]
Thus $L_{a,d} = L_{b,d}$ and $R_{a,d} = R_{b,d}$.
\end{proof}

The strategy for the rest of proof is as follows. We will introduce a
definable family using the notions introduced above. Then it will be shown
that the natural fragment extracted from this family is unbounded. The idea
how to show the last statement is the following: suppose there is $b\in K$,
$\varepsilon \in K_{>0}$ and a suitable semialgebraic function $g$ such that
the image of a definable subset of $L_{b,d} \times \{b\} \times R_{b,d}$ is an
$\varepsilon$-natural fragment close to some $n\in K$.
By Lemma \ref{sameapprox}, the set $L_{b,d}$ and $R_{b,d}$ do not change on an
interval around $b$. Being careful with the definitions we will use this
statement to show that we can find an element $c$ close to $b$ and $d' \in D$ such that the image of a definable subset of $L_{c,d'} \times \{c\} \times R_{c,d'}$ under $g$ is a $6\varepsilon$-natural fragment close to $n+1$.\\

\noindent Let $g: K^3 \to K$ be
\[
g(a,b,c) := \left\{
                  \begin{array}{ll}
                    \frac{c - a}{b - a} & \hbox{if } a < b < c, \\
                    0 & \hbox{otherwise.}
                  \end{array}
                \right.
\]

\noindent We will now define a family of definable sets from which we extract an unbounded natural fragment. Let $\pair{a,b} \in (0,1)^2$ and $d\in D$. Define
\[
Y_{a,b,d} := \{ 0 \} \cup \{g(l_{b,e},b,r_{b,e})  \ : \ e \in L_{a,d} \}.
\]
Let $J$ be the set of $\pair{a,b,d} \in ((0,1)\setminus f(D))^2 \times D$ such that the map $e \mapsto g(l_{b,e},b,r_{b,e})$ is strictly increasing on $L_{a,d}$.
Note that $(Y_{a,b,d})_{\pair{a,b,d} \in J}$ is indeed a definable family.

\begin{lem}\label{mainprooflem} Let $\pair{a,b,d} \in J$, $c\in (0,1)$, $u \in K$ and $\varepsilon \in K_{>0}$ with $\varepsilon < \frac{1}{4}$. Then
\begin{itemize}
\item [(i)] if $L_{a,d} = L_{c,d}$, then $Y_{a,b,d} = Y_{c,b,d}$.
\item [(ii)] if $Y_{a,b,d}$ is an $\varepsilon$-natural fragment close to $u$,
then there is an interval $I$ around $b$ such that for all $c\in I\setminus
f(D)$, we have $\pair{a,c,d} \in J$ and
$Y_{a,c,d}$ is a $3\varepsilon$-natural fragment close to $u$.
\end{itemize}
\end{lem}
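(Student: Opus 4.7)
Part (i) follows immediately from the definition, since $Y_{a,b,d}$ depends on $a$ only through $L_{a,d}$.

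For part (ii) I will read the statement as producing an interval $I$ around $b$ rather than around $a$: the perturbed argument in $Y_{a,c,d}$ sits in the second slot, and by part (i) combined with Lemma~\ref{sameapprox}, any small perturbation of $a$ in a non-$f(D)$ interval leaves $Y_{a,b,d}$ unchanged, so the ``around $a$'' reading would be vacuous. The plan is to prove that a small perturbation of $b$ changes each summand $g(l_{b,e},b,r_{b,e})$ by less than $\varepsilon$, uniformly in $e \in L_{a,d}$, and then to invoke Lemma~\ref{enatfrag}. Since $L_{a,d}$ is a bounded definable subset of the closed discrete set $D$, it is pseudo-finite by Fact~\ref{minmax}; hence by Fact~\ref{imagepf} the quantities $L := \max_{e \in L_{a,d}} l_{b,e}$ and $R := \min_{e \in L_{a,d}} r_{b,e}$ exist in $K$, and $L < b < R$ because $l_{b,e} < b < r_{b,e}$ for each $e$. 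For any $c \in (L,R) \setminus f(D)$, Lemma~\ref{sameapprox} applied with $e$ in place of $d$ yields $l_{c,e} = l_{b,e}$ and $r_{c,e} = r_{b,e}$ for every $e \in L_{a,d}$, so setting $h_b(e) := g(l_{b,e},b,r_{b,e})$ and $h_c(e) := g(l_{c,e},c,r_{c,e})$, the only difference between $h_b(e)$ and $h_c(e)$ lies in the denominator of $g$.

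A short calculation then gives $|h_c(e) - h_b(e)| = h_b(e) \cdot |b-c|/(c - l_{b,e})$. Two more appeals to pseudo-finiteness --- on $Y_{a,b,d}$ to bound $h_b(e)$ from above by $\max Y_{a,b,d}$, and on $L_{a,d}$ (via Fact~\ref{imagepf}) to bound $b - l_{b,e}$ from below by a positive minimum --- allow us to shrink $I$ to an interval around $b$ whose radius forces $|h_c(e) - h_b(e)| < \varepsilon$ for every $e \in L_{a,d}$. Since $(a,b,d) \in J$ makes $h_b$ a strictly increasing bijection $L_{a,d} \to Y_{a,b,d} \setminus \{0\}$, we may define $\phi : Y_{a,b,d} \to (-\varepsilon,\varepsilon)$ by $\phi(0) := 0$ and $\phi(h_b(e)) := h_c(e) - h_b(e)$; then $Y_{a,c,d} = \{y + \phi(y) : y \in Y_{a,b,d}\}$, and Lemma~\ref{enatfrag} yields that $Y_{a,c,d}$ is a $3\varepsilon$-natural fragment up to $u$. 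That $(a,c,d) \in J$ holds is automatic: the steps of $h_b$ exceed $1 - \varepsilon$, so after a perturbation of each value by less than $\varepsilon$ the steps of $h_c$ still exceed $1 - 3\varepsilon > 0$ since $\varepsilon < 1/4$, giving strict monotonicity.

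The main obstacle is securing the uniform bound on $|h_c(e) - h_b(e)|$ across $L_{a,d}$; this is handled entirely by invoking pseudo-finiteness twice (on $L_{a,d}$ and on $Y_{a,b,d}$) to extract the suprema and infima needed to fix an effective radius for $I$, after which the structural pieces --- transfer of $L_{c,e}, R_{c,e}$ via Lemma~\ref{sameapprox} and transfer of the natural-fragment property via Lemma~\ref{enatfrag} --- slot in routinely.
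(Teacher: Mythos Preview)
Your proposal is correct and follows essentially the same route as the paper's proof: both recognise that the interval must be taken around $b$ rather than $a$ (the paper's own proof silently does this, starting from $I_0 = (l_{b,d}, r_{b,d})$, and the lemma is later invoked in the main theorem with the interval around the second coordinate), both freeze $l_{c,e}, r_{c,e}$ via Lemma~\ref{sameapprox}, both use pseudo-finiteness of $L_{a,d}$ to obtain a uniform interval, and both conclude by feeding the resulting perturbation function into Lemma~\ref{enatfrag}. The only difference is cosmetic: the paper defines for each $e \in L_{a,d}$ the maximal subinterval $I_e \ni b$ on which $|g(l_{c,e},c,r_{c,e}) - g(l_{b,e},b,r_{b,e})| < \varepsilon$ (appealing to continuity of $g$ in its middle argument) and then intersects the $I_e$ using Fact~\ref{imagepf} on the definable endpoint maps, whereas you compute the perturbation explicitly as $h_b(e)\cdot|b-c|/(c-l_{b,e})$ and bound it by extracting $\max Y_{a,b,d}$ and $\min_e(b - l_{b,e})$ via two separate appeals to pseudo-finiteness. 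Your explicit calculation is marginally more informative; the paper's version is slightly slicker but relies on the same underlying compactness-type facts.
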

\begin{proof} Statement (i) is immediate from the definitions. For (ii) let $I_0$ be the interval $(l_{b,d},r_{b,d})$. By Lemma \ref{sameapprox} and $b\notin f(D)$, $L_{c,d} = L_{b,d}$ and $R_{c,d} = R_{b,d}$ for every $c \in I_0$. For each $e \in L_{a,d}$ let $I_e$ be the maximal open subinterval of $I_0$ containing $b$ such that for each $c \in I_e$
\begin{equation}\label{eq1}
|g(l_{c,e},c,r_{c,e}) - g(l_{b,e},b,r_{b,e})| < \varepsilon.
\end{equation}
This choice is possible, since $g$ is continuous in the second coordinate and $l_{c,e}=l_{b,e}$ and $r_{c,e}=r_{b,e}$ for every $c \in I_0$.
Note that the maps $e \in L_{a,d} \mapsto \sup I_e$ and $e \in L_{a,d} \mapsto \inf I_e$ are definable. Hence by Fact~\ref{imagepf} both functions have a maximum and a minimum on $L_{a,d}$. Hence there is $e_1,e_2 \in L_{a,d}$ such that
$$
\bigcap_{e \in L_{a,d}} I_e = \big (\inf I_{e_1}, \sup I_{e_2}\big ).
$$
Let $I$ be this open interval. Since $b \in I$, $I$ is non-empty.
Since $\varepsilon < \frac{1}{4}$
and $Y_{a,b,d}$ is an $\varepsilon$-natural fragment,
the map $e \mapsto g(l_{c,e},c,r_{c,e})$ is strictly increasing on $L_{a,d}$
for every $c \in I\setminus f(D)$.
Hence  $\pair{a,c,d} \in J$ for all such $c$.\\

\noindent Let $c \in I$. Let $k: Y_{a,b,d}  \to (-\varepsilon,\varepsilon)$
map $0$ to $0$ and $g(l_{b,e},b,r_{b,e})$ to
\[
g(l_{c,e},c,r_{c,e}) - g(l_{b,e},b,r_{b,e}).
\]
This function is well-defined, since $\pair{a,b,d} \in J$ and hence $e \mapsto g(l_{b,e},b,r_{b,e})$ is strictly increasing on $L_{a,d}$. By definition
\[
Y_{a,c,d} = \{ y + k(y) \ : \ y \in Y_{a,b,d} \}.
\]
By definability of $k$, \eqref{eq1} and Lemma~\ref{enatfrag}, this set is a $3\varepsilon$-natural fragment.
\end{proof}

\begin{thm}\label{thm:unbounded-fragment}
The natural fragment extracted from $(Y_{a,b,d} : \pair{a,b,d} \in J)$ is unbounded.
\end{thm}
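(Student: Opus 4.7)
I plan to argue by contradiction. Suppose the extracted natural fragment $N$ is bounded. Since $N$ is a natural fragment (by the previous lemma), it is closed and discrete, and Fact~\ref{unbounded} supplies a maximum $n^* := \max N$. My goal is to witness $n^* + 1 \in N$, which would contradict maximality.

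Fix $\varepsilon \in K_{>0}$ with $\varepsilon < 1/12$. Since $n^* \in N$, I can choose an initial triple $(a, b_0, d) \in J$ such that $Y_{a,b_0,d}$ is an $(\varepsilon/9)$-natural fragment up to $n^*$, retaining the freedom to take $d$ as large as needed. Applying Lemma~\ref{mainprooflem}(ii) to this triple yields an open interval $I$ around $b_0$ such that for every $c \in I \setminus f(D)$ we have $(a,c,d) \in J$ and $Y_{a,c,d}$ is an $(\varepsilon/3)$-natural fragment up to $n^*$.

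I would then choose $d^+ \in D$ with $d^+ > d$ satisfying (i) $L_{a,d^+} = L_{a,d} \cup \{e\}$ for a single new element $e$, so that $Y_{a,c,d^+}$ has exactly one new entry beyond $Y_{a,c,d}$, and (ii) the refined bracket $[l_{b_0,d^+}, r_{b_0,d^+}]$ lies inside $I$. Once such a $d^+$ is fixed, the map $c \mapsto g(l_{b_0,d^+}, c, r_{b_0,d^+})$ is continuous on $(l_{b_0,d^+}, r_{b_0,d^+}) \subseteq I$ and sweeps the whole interval $(1,\infty)$; Fact~\ref{nointerior} then lets me pick $c \in I \setminus f(D)$ with
\[
g(l_{b_0,d^+}, c, r_{b_0,d^+}) = n^* + 1.
\]
By Lemma~\ref{sameapprox} we have $l_{c,d^+} = l_{b_0,d^+}$ and $r_{c,d^+} = r_{b_0,d^+}$, so the unique new element of $Y_{a,c,d^+}$ equals $n^* + 1$ exactly. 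Since the previous maximum of $Y_{a,c,d}$ is within $\varepsilon/3$ of $n^*$, the final step has length within $\varepsilon/3$ of $1$, and strict increase at this last step gives $(a,c,d^+) \in J$. Hence $Y_{a,c,d^+}$ is an $\varepsilon$-natural fragment up to $n^* + 1$, witnessing $n^* + 1 \in N$.

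The main obstacle is meeting the two conditions on $d^+$ simultaneously: taking $d^+$ minimal above $d$ in $L_a$ (to keep the single-new-element property) while still achieving $[l_{b_0,d^+}, r_{b_0,d^+}] \subseteq I$ (to get the full sweep of $g$ needed to hit $n^* + 1$). If the next $L_a$-step after $d$ fails to pick up strictly better left and right approximations of $b_0$ inside $I$, the tuning step breaks down. I would resolve this by exploiting that $n^* \in N$ provides triples for every $\varepsilon' > 0$ and by choosing the initial data synchronously: take $a$ close to (but distinct from) $b_0$ so that $L_a$ tracks $L_{b_0}$, making every $L_a$-step a genuine refinement of the $b_0$-bracket on the left, and use density of $f(D)$ to guarantee matching elements of $R_{b_0}$ in the interval $(d,d^+]$. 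Establishing this synchronization cleanly is where I expect most of the technical work of the argument to concentrate.
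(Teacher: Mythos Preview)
Your overall architecture---contradiction on $n^* = \max N$, use Lemma~\ref{mainprooflem}(ii) to gain an interval of admissible second coordinates, then append exactly one new element tuned to land at $n^*+1$---matches the paper's. The gap is in how you arrange for ``exactly one new element'' and ``bracket inside $I$'' to hold simultaneously. You fix $a$ at the outset and then search for a suitable $d^+$; as you yourself note, once $a$ is fixed you have no control over where the next element of $L_a$ falls relative to the shrinking of the $b_0$-bracket, and your proposed fix (choose $a$ close to $b_0$ so that $L_a$ ``tracks'' $L_{b_0}$) does not actually resolve this: even if each $L_a$-step refines the left side of the $b_0$-bracket, you still need a new element of $R_{b_0}$ to appear in $(d,d^+]$, and nothing forces that. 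The claim that you can ``retain the freedom to take $d$ as large as needed'' is also unjustified: membership of $n^*$ in $N$ only hands you \emph{some} triple for each $\varepsilon'$, with no control on its $d$-coordinate.

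The paper's maneuver is to decouple these constraints by choosing $a$ \emph{last}, not first. Starting from $(u,v,e)$ witnessing $n^*$, one obtains the interval $I$ around $v$, then picks $d_0 \geq e$ large enough that $f(D_{\leq d_0})$ already contains points whose images bracket some subinterval of $I$. Only then does one choose a \emph{fresh} $a \notin f(D)$, just above $l_{u,e}$, so that $(l_{u,e},a) \cap f(D_{\leq d_0}) = \emptyset$; taking $d$ minimal in $D_{\geq d_0}$ with $f(d) \in (l_{u,e},a)$ forces $L_{a,d} = L_{u,e} \cup \{d\}$. Since $d \geq d_0$, the images inside $I$ are already available, so one can pick $b \in I \setminus f(D)$ with $l_{b,d}, r_{b,d} \in I$ and $g(l_{b,d},b,r_{b,d})$ within $\varepsilon/2$ of $n^*+1$. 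Lemma~\ref{mainprooflem}(i) (which you do not invoke) transfers the $\varepsilon/2$-natural-fragment property from $Y_{u,b,e}$ to $Y_{a,b,e}$, and appending the single new value $g(l_{b,d},b,r_{b,d})$ gives the desired $\varepsilon$-natural fragment up to $n^*+1$. The point you are missing is that $a$ should be chosen close to the \emph{current best left value} $l_{u,e}$, after $d_0$ is already in hand---not close to $b_0$, and not before the interval $I$ is established.
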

\begin{proof}
Let $F$ be the natural fragment extracted from $(Y_{a,b,d} : \pair{a,b,d} \in J)$. We first show that $F$ is non-empty. 
It is enough to find for every $\eps \in K_{>0}$ a triple $\pair{a,b,d} \in J$ such that $Y_{a,b,d}$ is an $\eps$-natural fragment up to~$1$.
Let $d\in D$ be the smallest element of $D$. Take $a, b \in [0,1] \setminus f(D)$ such that
\[
0 < \frac{f(d)}{1 + \eps} <b < f(d) < a < 1.
\]
Then $L_{a,d} = \{d\}$, $L_{b,d}=\emptyset$ and $R_{b,d} = \{d\}$. Hence $l_{b,d} = 0$ and $r_{b,d}=f(d)$. Thus
\[
|g(l_{b,d},b,r_{b,d})-1| = |\frac{f(d)}{b} - 1| < \eps.
\]
Hence $Y_{a,b,d} = \{0,\frac{f(d)}{b} \}$ is an $\varepsilon$-natural fragment up
to~$1$.\\

\noindent Now towards a contradiction suppose that $F$ is bounded. Let $n$ be
the maximum of~$F$. We will establish a contradiction against the maximality
of $n$. For this, it is enough to construct for every $\varepsilon \in K_{>0}$
a triple $\pair{a,b,d} \in J$  such that $Y_{a,b,d}$ is an $\varepsilon$-natural
fragment close to $n+1$.\\

\noindent Let $\varepsilon \in K_{>0}$. Since $n$ is in the natural fragment extracted from $(Y_{a,b,d} : \pair{a,b,d} \in J)$, there is $\pair{u,v,e}\in J$ such that $Y_{u,v,e}$ is an $\frac{\varepsilon}{6}$-natural fragment close to~$n$. Let $I$ be the interval around $v$ given by Lemma \ref{mainprooflem}(ii) such that for every $w \in I\setminus f(D)$, $Y_{u,w,e}$ is an $\frac{\varepsilon}{2}$-natural fragment close to $n$ and $\pair{u,w,e} \in J$.\\

\noindent
Let $d_0$ be an element of $D_{\geq e}$ such that there are $e_1,e_2 \in D$
with $e_1,e_2\leq d_0$, $f(e_1) < f(e_2)$, and
\[
\big( f(e_1), f(e_2) \big) \subseteq I.
\]
Such an element exists because of the density of $f(D)$. Choose $a\in
K\setminus f(D)$ such that $l_{u,e} < a$ and
\[
\big(l_{u,e},a\big) \cap f(D_{\leq d_0}) = \emptyset.
\]
We can find such an element because $f(D_{\leq d_0})$ is pseudo-finite and $f(D)$ does not have interior by Fact \ref{nointerior}. Now let $d \in D$ be the smallest element in $D_{\geq d_0}$ with
$$
f(d) \in \big(l_{u,e},a\big).
$$
Then $L_{a,d} = L_{u,e} \cup \{d\}$.\\

\noindent It is left pick to $b$. First take $e_1,e_2 \in D_{\leq d}$ such
that $f(e_1) < f(e_2)$,
\[
\big( f(e_1), f(e_2) \big) \subseteq I \quad \text{and} \quad \big( f(e_1), f(e_2) \big) \cap f(D_{\leq d}) = \emptyset.
\]
This choice is possible, because $f(D_{\leq d})$ is pseudo-finite.  Now pick $b\in \big(f(e_1),f(e_2)\big)$ such that $b \notin f(D)$ and
\begin{equation}\label{eqmainproof1}
| g(f(e_1),b,f(e_2)) - (n+1)| < \frac{\varepsilon}{2}.
\end{equation}
By our choice of $b$, $f(e_1) = l_{b,d}$ and $f(e_2) = r_{b,d}$.
Since $b \in I \setminus f(D)$, $Y_{u,b,e}$ is a $\frac{\varepsilon}{2}$-natural fragment close to $n$ and
$\pair{u,b,e} \in J$. Since $L_{u,e} = L_{a,e}$ by choice of $a$, we have that
$\pair{a,b,e} \in J$ and $Y_{u,b,e} = Y_{a,b,e}$ by Lemma
\ref{mainprooflem}(i).
Hence $Y_{a,b,e}$ is a $\frac{\varepsilon}{2}$-natural fragment close to~$n$. If $Y_{a,b,e}$ is a $\frac{\varepsilon}{2}$-natural fragment close to $n+1$, then $Y_{a,b,e}$ is also a $\varepsilon$-natural fragment close to $n+1$ and hence we are done. Thus from now on we can assume that $Y_{a,b,e}$ is not a $\frac{\varepsilon}{2}$-natural fragment close to $n+1$. Then
\begin{equation}\label{eqmainproof2}
|\max Y_{a,b,e} - n| < \frac{\varepsilon}{2}.
\end{equation}
Now set $z:= g(l_{b,d},b,r_{b,d})$. Since $L_{a,d} = L_{u,e} \cup \{d\}$, we have
\[
Y_{a,b,d}=Y_{a,b,e} \cup \{z\}.
\]
Then by \eqref{eqmainproof1} and \eqref{eqmainproof2}
$$
|z - \max Y_{a,b,e} - 1| \leq |z -(n+1)| + |\max Y_{a,b,e} - n| < \frac{\varepsilon}{2} + \frac{\varepsilon}{2} = \varepsilon.
$$
Hence $Y_{a,b,d}$ is an $\varepsilon$-natural fragment close to $n+1$.
\end{proof}

The proof of the above theorem can be easily adapted to show that, for every
pseudo-finite set $F \subseteq K_{\geq 1}$, $\{0 \} \cup F$~can be approximated
arbitrarily close by some $Y_{a, b,d}$: i.e., for every $\eps > 0$
there exists $\pair{a,b,d} \in J$ such that $\dist(\{ 0 \} \cup F, Y_{a,b,d}) <
\eps$.

As shown in Proposition~\ref{nat1}, if $D$ is the unbounded natural fragment
extracted in Theorem~\ref{thm:unbounded-fragment}, then $D \cup -D$ is a
definable discrete subring of $\K$, and Theorem~A follows.


\section{Unrestrained DC structures}
\label{sec:Peano}
\begin{defn}We say that $\K$ is \textbf{unrestrained} if it defines a discrete subring,
otherwise, we say that $\K$ is \textbf{restrained}.
\end{defn}
We claim that unrestrained structures are the same as model of second-order
arithmetic, in a sense that we will make precise.
The result of this section will be used in \S\ref{sec:thmB}.

First, we make precise what we mean by model of second-order arithmetic;
as a background reference we use \cite{Simpson}, especially its \S1.
Let $L_2 := \structure{N, D; 0, 1, +, \cdot, <}$ be the (first-order!) 2-sorted language of second-order arithmetic,
with a sort $N$ for (``natural'')
numbers (which will be denoted by lowercase letters) and one sort $D$ for
sets (which will be denoted by uppercase letters), with number constants $0$ and~$1$,
binary operations $+$ and~$\cdot$ and a binary
relation $<$ on numbers, and a binary relation $\in$ between
numbers and sets.
Let $L$ be an arbitrary expansion of~$L_2$ (here we differ
from~\cite{Simpson}, where only structures in the language $L_2$ are
considered); notice that we allow  extra function symbols and predicates that involve the sort $D$ and not only new function and relation symbols on~$N$.
A model of second-order  arithmetic is a (again, first-order) $L$-structure
$\mathcal N := \structure{N, D; 0, 1, +,\cdot, < , \dotsc}$
 satisfying the following axioms:
\begin{description}
\item[Basic axioms]  $\structure{N, + ,0 , 1, +, \cdot, <}$ is the positive cone of
a discrete linearly ordered ring~$Z$;
\item[Extension axiom]
\[
\forall X \forall Y\ (\forall n (n \in X \leftrightarrow n \in
Y) \rightarrow X = Y);
\]
\item[Induction axiom]
\[
\forall X\ ((0 \in X \ \&\ \forall n (n \in X \rightarrow n + 1 \in X))
\rightarrow X = N);
\]
\item[Comprehension scheme]
\[
\exists X \forall n\ (n \in X \leftrightarrow \phi(n)),
\]
where $\phi(n)$ is any $L$-formula in which $X$ does not occur freely.
\end{description}

Remember that we view second-order arithmetic as a theory in first-order logic;
thus, the theory will have models besides the standard one, given by $\N$ and
all its subsets.

\bigskip
\subsection{From unrestrained structures to models of arithmetic}
First, 
we show how to transform an unrestrained structure
into a model of second-order arithmetic.

For the rest of this subsection, let $\K$ be unrestrained.
Let $Z$ be a definable discrete subring of~$\K$.
Note that $Z$ is the unique subring with that property: if $Z'$ were
another discrete definable subring of~$K$,
one considers the minimum positive element of $Z \Delta Z'$ and easily reaches a contradiction against $Z\neq Z'$.
We will denote the non-negative elements of $Z$ by $N$ and the fraction field
of~$Z$ by~$Q$. We start by transferring some of the coding techniques, in particular recursion, to our setting. As most of the proofs are direct transfers of the classical ones, we leave most of the details to the reader.

It is already clear that $N$ is a model of first-order arithmetic.

\begin{lem}\label{lem:Goedel}
There is a definable map $\beta : N \times N \to N$
such that for each $l \in N$ and each definable map
$f: N_{\leq l} \to N$ there is $k \in N$ such that $\beta(k,i) = f(i)$ for $i\leq l$.
\end{lem}
\begin{proof}\setcounter{claim}{0}
Since the function $f$ may definable using parameters outside~$N$, we will
remind the proof (we refer to \cite[\S II.2]{Simpson} for the details).
Since $N$ is a model of first-order arithmetic, 
there is a definable bijection $\theta: N \times N \to N$.
Define $\beta'(r, a, i) := \rem(r, (i + 1) \cdot a + 1)$,
where $\rem(x, y)$ denotes the remainder after integer division of $x$ by~$y$.
Let $\beta(k, i) := \beta'(\theta^{-1}(k), i)$.

\noindent Let $l \in N$ and $f: N_{\leq l} \to N$ be definable. It is left to show that there exist $r, a \in N$ such that
$\beta'(r, a ,i) = f(i)$ for $i\leq l$.
Since $N$ is a model of first-order arithmetic, we can find $a \in N$ such that $f(i) < a$ for each $i \leq l$ and all elements of
\[
\{ (i+1)a +1 \ : \ i \in N_{\leq l} \}
\]
are pairwise coprime. We denote $(i+1)a+1$ by $k_i$.
To finish the construction we just need to establish the following claim.

\begin{claim}
For each $m \in N_{\leq l}$, there exists $r \in N$ such that for each $i \in N_{\leq m}$
\[
\rem(r, k_i) = f(i).
\]
\end{claim}
Suppose not. Let $m \in N_{\leq l}$ be minimal such that $r$ as in the claim does not exist.
By minimality of~$m$, there is $r' \in N$ be such that for every
$i \in \N_{\leq m-1}$
\[
\rem(r',k_i) = f(i).
\]
Note that the set $\{ k_i \ : \ i \in N_{<m}\}$ is definable inside~$N$.
Let $r \in N$ such that $\rem(r, k_i) = \rem(r', k_i)$ for each $i < m$,
and $\rem(r, k_m) = f(m)$. Such an $r$ exists by the Chinese Remainder Theorem in $N$. The Chinese Remainder holds in $N$ because $N$ is a model
of first-order arithmetic.
Contradiction.
\end{proof}

From the proof of the above Lemma, it is clear that $\beta$ is already
definable in  $\<N, + ,\cdot, < \>$, and hence for every $l \in N$,
every definable subset of $N_{< l}$ is definable in~$\< N, + ,\cdot, < \>$.

\begin{lem}\label{lem:recursive} Let
$c: K^n \to N$ and $g : K^{n} \times N \to N$ be definable. Then there is
a unique definable function $f : K^n \times N \to N$ such that
for all $a \in K^n$
\begin{align*}
f(a,0) &= c(a),\\
f(a,i+1) &= g(a,f(a,i)).
\end{align*}
\end{lem}
\begin{proof}
As in the real case, given $a \in K^n$ and $j, l \in N$,
we define $f(a, j) = l$ if there exists $k \in N$ such that
\[
\begin{aligned}
\beta(k,0) & = a;\\
\beta(k, j) &= l;\\
\forall i \in N \text{ such that } i < j,\quad \beta(k, i+1) &=
g(a, \beta(k,i)).
\qedhere
\end{aligned}
\]
\end{proof}

\begin{cor}\label{cor:defbij} Let $X \subseteq N$ be unbounded and definable. Then there is definable bijection from $N$ to $X$.
\end{cor}
\begin{proof} Let $f : N \to X$ be the function that takes $0$ to the minimum of $X$ and $i+1$ to the successor of $f(i)$ in $X$. By Lemma \ref{lem:recursive} $f$ is definable.
\end{proof}

\begin{defn}
Let $A$ and $B$ be definable sets.
Let $\Delta$ be a family of functions from $B$ to~$K^m$.
We say that \textbf{$\Delta$ is in definable bijection with $A$} if there exists a
definable map $\alpha: A \times B  \to K^m$ such that:
\begin{enumerate}
\item for every $f \in \Delta$ there exists a unique $a \in A$ such that for
every $b \in K^n$ $f(b) = \alpha(a,b)$;
\item for every $a \in A$ the map $x \mapsto \alpha(a,x)$ is in~$\Delta$.
\end{enumerate}
With the above notation, we denote by $\hat\alpha: \Delta \to A$ the map
sending $f \in \Delta$ to the unique $a \in A$ satisfying (1).

If $\Gamma$ is a family of subsets of $B$, we say that \textbf{$\Gamma$ is in
definable bijection with  $A$} if the family of characteristic functions of the
sets in $\Gamma$ is in definable bijection with~$A$.
By abuse of notation, if $\alpha: A \times B \to \set{0,1}$ is the
corresponding map, we denote by $\hat\alpha: \Gamma \to A$ the map sending
$X \in \Gamma$ to the unique $a \in A$ satisfying the analogue of~(1).
\end{defn}

\begin{bsp} The family of open balls in $K^n$ is in definable bijection with
$K^n \times K_{> 0}$.
\end{bsp}

\begin{lem}\label{lem:defbijfinite} The family of
definable bounded subsets of $N$ is in definable bijection with $N$.
\end{lem}
\begin{proof} Let $C \subseteq N$ be the set of all $k \in N$ such that $\beta(k,i) \in \{0,1\}$ for all $i \in N$.
Define $\gamma : C \times N \times N \to \{0,1\}$ by
\[
(k,l,i) \mapsto \left\{
                  \begin{array}{ll}
                    \beta(k,i), & \hbox{if $i\leq l$;} \\
                    0, & \hbox{otherwise.}
                  \end{array}
                \right.
\]
Since $N$ is a model of first-order arithmetic, there is a definable bijection $\theta: N \times N \to N$. Let $D$ be $\theta(C \times N)$. Now consider the subset $E$ of $D$ containing all $k \in N$ such that there is no $k' \in N$ with $k'<k$ and
\[
\{ i \in N : \gamma(\theta^{-1}(k'),i)=1 \} = \{ i \in N : \gamma(\theta^{-1}(k),i)=1 \}.
\]
By Lemma \ref{lem:Goedel}, for every bounded definable subset $X$ of $N$ there is $k \in D$ such that $\{ i \in N : \gamma(\theta^{-1}(k),i)=1 \} = X$. Hence by Fact \ref{unbounded}, there is a unique $k \in E$ with this property.
Hence the family of definable bounded subsets of $N$ is in definable bijection with $E$. By Corollary \ref{cor:defbij} $E$ is in definable bijection with $N$. Thus the family of definable bounded subsets of $N$ is in definable bijection with $E$.
\end{proof}

\begin{cor}\label{cor:tuples}
 The family of definable subsets of $N$ is in definable bijection with~$K$.
\end{cor}
We denote by $\hat\epsilon$ the corresponding bijection.
\begin{proof}\setcounter{claim}{0}
The idea of the proof is to use the expansion in base $2$ to encode definable
subsets of $N$ as elements of~$K$.
Let $E$ be the family of all definable subsets of~$N$ and
$C$ be the family of unbounded definable subsets of~$N$.
We want to prove that $E$ is in definable bijection with~$K$.

\begin{claim}\label{cl:binary}
$C$ is in definable bijection with $(0,1]$.
\end{claim}

The Corollary then follows: in fact, by Lemma~\ref{lem:defbijfinite}, $E \setminus C$ is in definable bijection
with~$N$.
Moreover, the disjoint union of $K$ and $N$ is in definable bijection with~$K$:
we define $\mu: K  \sqcup N \to K$  as follows:
\[\mu(x) :=\begin{cases}
 x   &       \text{if } x \in K \setminus N\\
 2x   &     \text{if  $x$ is in the copy of $N$ inside $K$}\\
 2x + 1 & \text{if $x$ is in the copy of $N$ outside $K$.}
\end{cases}
\]
Hence, $E$ is in definable bijection with~$K$.

Let us prove now Claim~\ref{cl:binary}.
By Lemma \ref{lem:recursive} there is a unique definable function from $N$
to $N$,
which we denote by $2^n$, such that $2^0 = 1$ and
$2^{n+1} = 2 \cdot 2^n$.
For $i \in \set{0,1}$ let
\[
Y_{n,i} := \set{a \in (0, 1] \ : \ \hbox{ there is } m \in 2 \cdot N + i
  \hbox{ such that } m <  2^n a \leq m+1}.
\]
Given $X \subset N$ definable and unbounded and $n \in N$, let
$f: N \to \set{0,1}$ be the characteristic function of~$X$, and
\[
Z_{n,X} := \bigcap_{\ell \in N_{\leq n}} Y_{\ell, f(\ell)}
\]
By induction on $n$, it is easy to see that
\[
Z_{n,X} = \Bigl(\frac{a_n}{2^n}, \frac{a_n+1}{2^n}\Bigr]
\]
for some unique $a_n \in N$ with $0 \leq a_n < 2^n$.
Let $a := \limsup_{n \to \infty} \frac{a_n}{2^n}$.
Then, since we assumed that $X$ is unbounded, it is easy to see that
$\bigcap_{n \in N} Z_{n, X} = \set a$ and $a \in (0,1]$.
Define $\hat\lambda(X) := a$.

We now show that
$\hat\lambda$ is a definable bijection between $C$ and $(0,1]$.
Given $a \in (0,1]$ let $X := \set{n \in N: a \in Y_{n,1}}$: then,
  $\hat\lambda(X) = a$, and hence $\hat\lambda$ is surjective.
Let $X, X'$ be distinct definable unbounded subsets of~$N$, and assume, for a
contradiction, that $b := \hat \lambda(X) = \hat \lambda(X')$.
Let $n := \min(X \Delta X')$; w.l.o.g., we can assume $n \in X \setminus X'$.
Then, $Z_{n,X'} = \Bigl(\frac{a_n - 1}{2^n}, \frac{a_n}{2^n}\Bigr]$ and
$Z_{n,X} = \Bigl(\frac{a_n}{2^n}, \frac{a_n + 1}{2^n}\Bigr]$ for a unique
$a_n \in N$ with $1 \leq a_n < 2^n$.
Moreover, since $b = \hat \lambda(X) = \hat \lambda(X')$, we have
$a_n = b \cdot 2^n$, and, for every $m > n$, $m \in X'$ and $m \notin X$;
however, the latter contradicts the fact that $X$ is unbounded.

The corresponding function $\lambda: K \times N \to \set{0,1}$ is given
$\lambda(a,n) = 1 \leftrightarrow a \in Y_{n,1}$.
\end{proof}


\begin{lem}\label{lem:delta}
The family of definable functions from $N$ to $K$ is in definable bijection
with~$K$.
\end{lem}
\begin{proof}
The idea of the proof is that $K^N \approx (2^N)^N \approx 2^{N \times N} \approx 2^N \approx K$,
where $A^B$ denotes the family of definable functions from $B$ to~$A$, and
$A \approx B$ means that there is a definable bijection between $A$ and~$B$.

More in details, fix a definable bijection $\theta: N \times N \to N$.
Given $f: K \to N$ definable, let $X_f$ be the definable subset of $N$ such
that, for every $i, j \in N$,
\[
j \in \hat\epsilon^{\, -1}(f(i)) \leftrightarrow \theta(i,j)  \in X_f.
\]
The definable bijection $\hat\delta$ is given by $\hat\delta(f) := \hat\epsilon(X_f)$.
Equivalently, define $\hat\delta(f)$ to be the unique $b \in K$ such that, for
every $i, j \in N$,
$\epsilon(f(i), j) = \epsilon(b, \theta(i,j))$.

The corresponding function $\delta$ is defined in the following way: for every $b
\in K$ and $i \in N$, $\delta(b,i)$ is the unique
$c \in K$ such that, for every $j \in N$, $\epsilon(c,j) = \epsilon(b, \theta(i,j))$.
\end{proof}

\begin{cor}\label{cor:recursive}
Let $c : K^n \to K$ and $g : K^{n} \times N \to K$ be definable. Then there is
a unique definable function $f : K^n \times N \to K$ such that for all $a \in K^n$
\begin{align*}
f(a,0) &= c(a),\\
f(a,i+1) &= g(a,f(a,i)).
\end{align*}
\end{cor}




Notice that from the proofs of Corollary~\ref{cor:tuples} and
Lemma~\ref{lem:delta} it follows that every definable subset of $N$ and every
definable function from $N$ to $K$ are already definable in $\< K, N, +,
\cdot, < \>$.

Moreover, 
we can encode definable subsets of $N$ as
elements of $\K$.
Thus, the set sort of the proposed model of arithmetic is $\K$ itself, and the
inclusion relation $\in$ is defined as follows:
\[
n \in a \leftrightarrow \epsilon(a, n)=1.
\]
Finally, we add a function, predicate or constant for (the translation via
$\hat\epsilon$ of) every function, predicate, or constant in the language of~$\K$.
It is now clear that the resulting structure is a model of second-order
arithmetic.

\bigskip
\subsection{From models of arithmetic to unrestrained structures}
Conversely, start with
$\mathcal N := \structure{N, D; 0, 1, +,\cdot, < ,\dotsc}$ a model of
second-order arithmetic, in the sense explained at the beginning of
\S\ref{sec:Peano}, in the language~$L$.
Let $Z$ be the ring generated by $N$ and $Q$ be the field of fractions of~$Z$.
As in \cite[Def. I.4.2]{Simpson}, a set of ``real numbers'' can be interpreted
inside $\mathcal N$: more precisely, a ``sequence of rational numbers'' is a
definable function from $N$ to~$Q$; such a sequence is Cauchy if it satisfies
the usual Cauchy condition.
We set $K$ to be the set of Cauchy sequences modulo the null sequences,
with the operations $+$, $\cdot$ and order $<$ induced by the ones on~$Q$.
Clearly, $Q$~embeds definably and canonically in~$K$.
It is also clear that $\structure{K, \cdot, + , <}$ is an ordered field;
moreover, since the family of Cauchy sequences of rational numbers
is a definable family, $K$ is interpretable in~$\mathcal N$.

\begin{lem}\label{lem:K0}
$\K_0 := \structure{K, Z, \cdot, +, <}$ is a definably complete structure.
\end{lem}
\begin{proof}
Standard proof of analysis (cf\mbox{.} \cite[Theorem III.2.2]{Simpson}).
Let $A \subset K$ be definable, bounded, and nonempty; we have to show that
$A$~has a least upper bound.
W.l.o.g., we can assume that $A$ is an initial segment, that is, if $a \in A$ and
$b < a$, then $b \in A$; moreover, we can also assume $0 \in A$.

For every $n \in N$, let
\[
f(n) := \max\set{\frac{m}{ 2^n}: m \in N, \frac{m}{2^n} \in A}.
\]
By definition, $f$~takes values in $Q \cap A$;
it is clear that $f$ is a Cauchy sequence, and that its equivalence class is
the l.u.b\mbox{.} of~$A$.
\end{proof}

Thus, we have the function $\hat\epsilon$ for the structure $\K_0$; using the
coding given by $\hat\epsilon$ we can translate all the extra functions, predicates and constants in~$L$
as functions, predicates, and constants on~$K$;
we denote by $\K$ the resulting expansion of~$\K_0$.
It is still true (with the same proof as in Lemma~\ref{lem:K0})
that $\K$ is definably complete, and thus we showed how to
transform a model of second-order arithmetic into an unrestrained definably complete structure.

The two transformations are inverse to each other; thus we can say that models
of second-order arithmetic and unrestrained definably complete structures are
the same objects.

\section{Definable functions and meager sets}
In this section we will establish some preliminary facts about definable
functions, and show how to transfer part of the theory about Baire category to
the definable context.
We will later use these facts to prove Theorem~\ref{thm:monotone}.

\subsection{Definably meager and $D_\Sigma$ sets}
In order to shows how to use Theorem~A, and because
we will use it in the remainder of this section,
we give a quick new proof of a conjecture
by Fornasiero and Servi \cite{FS}. It was first proved by different methods in~\cite{baire}.

\begin{defn}
A definable set $A \subseteq K^n$ is called \textbf{definably meager}
if $A = \bigcup_{t \in K} X_t$, for some
definable increasing family  $(X_t: t \in K)$ of nowhere dense subsets of~$K^n$.
\end{defn}

\begin{lem}\label{lem:enum-meager}
Let $A \subseteq K^n$ be \pn. Then $A$ is definably meager.
\end{lem}
\begin{proof}
Since $A$ is \pn, there exists a definable closed and discrete set $D \subset K_{\geq 0}$
and a definable surjective function $f: D \to A$. For each $t \in K$, let $X_t := f(D_{\leq t})$. By Fact \ref{imagepf}, each $X_t$ is pseudo-finite.
Then $A = \bigcup_{t \in K} X_t$, and $(X_t: t \in K)$
is definable increasing family of nowhere dense subsets of~$K^n$.
\end{proof}

\begin{thm}[\cite{baire}]\label{thm:baire}
$K$ is not definably meager.
\end{thm}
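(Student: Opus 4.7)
I would apply Theorem~A to reduce to two cases: $\K$ is either restrained or unrestrained. Throughout, suppose for contradiction that $K = \bigcup_{t \in K} A_t$ for a definable increasing family $(A_t)$ of nowhere dense sets. Passing to closures, I may assume each $A_t$ is closed nowhere dense.

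In the unrestrained case, a definable discrete subring $Z \subseteq K$ is provided by Theorem~A, and $Z_{>0}$ is a model of second-order Peano arithmetic (see \S\ref{sub:Peano}), so definable recursion along $Z$ is available. Since $(A_t)$ is increasing, $K = \bigcup_{z \in Z_{>0}} A_z$ as well. I plan to transfer the classical Baire argument: recursively build a nested sequence of closed intervals $(I_z)_{z \in Z_{>0}}$ with $I_z \cap A_z = \emptyset$ and length at most $1/z$. At the successor step, the interior of $I_z$ is a non-empty open set and $A_{z+1}$ is nowhere dense, so one can definably carve out a small closed sub-interval of $I_z$ avoiding $A_{z+1}$. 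Definable completeness then yields a point $c \in \bigcap_z I_z$ which avoids every $A_z$, contradicting $K = \bigcup_z A_z$.

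In the restrained case, Theorem~A(I) says that $f(D)$ is nowhere dense for every definable closed discrete $D \subseteq K$ and every definable $f: K \to K$. The plan is to derive a contradiction by producing, from the hypothesized meager cover, a definable closed discrete set $D$ together with a definable function $f$ such that $f(D)$ is dense in some non-empty open set. For each $t$, the closedness and nowhere density of $A_t$ together with definable completeness allow one to definably choose a canonical gap in $A_t \cap [0,1]$ and its midpoint $g(t)$; since $K = \bigcup A_t$ every point of $K$ eventually enters some $A_t$, so as $t$ ranges over $K$ the midpoints $g(t)$ should trace a set dense in some open sub-interval. To pass from the $K$-index to a discrete index, I would invoke the best-approximation and asymptotic-extraction machinery of \S\S 3--4: from a definable function with sufficiently dense image one extracts an unbounded natural fragment, which by Proposition~\ref{nat1} yields a discrete subring, contradicting restraint.

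The main obstacle is the restrained case. The unrestrained case is essentially a transparent transfer of classical Baire via the internal second-order arithmetic, but the restrained case has no internal natural numbers to index a recursion. Instead one must exploit the strong restraint hypothesis together with nowhere-density to extract from the meager cover a dense-image definable map on a discrete set, likely via the natural-fragment technology already developed in the paper. The delicate point is guaranteeing that the chosen "gap midpoints" form a set with genuinely dense image after restricting to a definable closed discrete index.
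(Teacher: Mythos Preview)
Your case split via Theorem~A and your treatment of the unrestrained case are both in line with the paper: the paper cites \cite[Lemma~6.2]{enum} for that case, which is precisely the classical Baire nested-interval recursion carried out along the definable copy of~$N$, as you describe.

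The restrained case, however, has a genuine gap. Your plan is to \emph{contradict} restraint by manufacturing a definable map on a closed discrete set with dense image, but nothing in your sketch produces such a map. The ``gap midpoint'' function $g$ is defined on all of~$K$, not on a discrete set, and there is no reason its image should be dense anywhere: the canonical gap of $A_t \cap [0,1]$ may shrink concentrically around a single point as $t$ grows, so that $g$ is eventually constant. Your appeal to \S\S3--4 is also misdirected: that machinery \emph{consumes} a discrete-domain map with dense image and outputs a discrete subring (this is exactly the proof of Theorem~A); it cannot help you pass from a $K$-indexed function to a discrete-indexed one.

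The paper's route (via \cite[Proposition~6.4]{enum}, with the idea spelled out later in Lemma~\ref{lem:restrained-meager}) runs in the opposite direction: rather than contradicting restraint, it \emph{uses} restraint to show that every definably meager subset of $K$ is already nowhere dense. The ingredients are that each closed nowhere dense $A_t \subseteq K$ is the closure of the discrete set of endpoints of $K \setminus A_t$ (Lemma~\ref{lem:nd-countable}); that after re-indexing the increasing family along an unbounded closed discrete set the union of these endpoint sets is \pn; and that Theorem~A(I) then forces this union---hence each $\bigcup_t A_t$---to be nowhere dense. (When no unbounded closed discrete set exists, $\K$ has locally o-minimal open core and one quotes a separate result.) Since $K$ is not nowhere dense in itself, the theorem follows.
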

\begin{proof}
If $\K$ is restrained, then Theorem~A and
\cite[Proposition~6.4]{enum} shows that every
definably meager sets has empty interior, and in particular $K$ is not
definably meager.

If not, then, as shown in \cite[Lemma~6.2]{enum},
we can mimic one of the classical proofs of Baire's category theorem
to conclude that $K$ is not definably meager.
\end{proof}

\begin{defn}
Let $X \subseteq K^n$ be a definable set.
We say that $X$ is a \textbf{$D_\Sigma$ set} if it is the union of a definable
increasing family, indexed by~$K$, of closed subsets of $K^n$.
\end{defn}

By \cite[Remark~3.3]{FS}, a definable set is a $D_\Sigma$ set iff it is the projection of a definable
closed set.



\begin{lem}\label{fact:KU}
Let $A \subseteq K^{n+m}$ be a $D_\Sigma$ set.
Let
\[
T(A) := \set{x \in K^n: A_x \text{ is definably meager}}.
\]
Then $A$~is definably meager iff $K^n \setminus T(A)$ is definably meager.
\end{lem}
\begin{proof}
It follows immediately from \cite[Lemma 5.2 and Proposition~5.4]{FS}.
\end{proof}


\subsection{Definable functions and continuity}
Now that we have a reasonable analogue of the notion of meager sets, we can
use it to transfer several well-known results from real analysis to~$\K$.
For the remainder of this subsection we will not use Theorem~A anymore.
Afterwards, we will use these results to prove
Theorem~B by distinguishing the case when $\K$ is either restrained or
unrestrained.

First, we show that a monotone function $f$ is continuous outside a ``small'' set.

\begin{lem}\label{lem:monot-cont}
Let $f: K \to K$ be a definable monotone function.
Then, the set $\Disc f$ of discontinuity points of $f$ is \pn.
\end{lem}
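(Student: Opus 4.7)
The plan is to exhibit $\Disc f$ as the second-coordinate projection of a definable closed and discrete subset of $K^2$, and then to invoke the result from \cite{enum} that every definable discrete subset of $K^n$ is \pn, together with the preservation of this property under definable images.

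Assuming without loss of generality that $f$ is weakly increasing, the one-sided limits $f(x^-) := \sup_{y<x} f(y)$ and $f(x^+) := \inf_{y>x} f(y)$ exist for every $x \in K$ by definable completeness, so the jump function $j(x) := f(x^+) - f(x^-)$ is a definable nonnegative function on $K$ and $\Disc f = \{x \in K : j(x) > 0\}$. As a preliminary step I will verify that for each $a, b \in K_{>0}$ the definable set
\[
J(a,b) := \{x \in [-b,b] : j(x) \geq a\}
\]
is pseudo-finite: boundedness is immediate, and closedness together with discreteness follows from a short accumulation argument, since an accumulation point $x_0$ of $J(a,b)$ from the right, say, would force $f(x_0+\delta) - f(x_0^+) \geq a$ for every $\delta > 0$, contradicting $\inf_{\delta > 0} f(x_0+\delta) = f(x_0^+)$.

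The central construction is to set $k(x) := \max(|x|, 1/j(x), 1)$ for $x \in \Disc f$ and to consider the definable set
\[
D := \{(k(x), x) \in K^2 : x \in \Disc f\},
\]
whose projection onto the second coordinate is exactly $\Disc f$. The main technical step, which I expect to be the crux of the proof, is to check that $D$ is closed and discrete in $K^2$. The idea is that if $(k_0, x_0)$ were a limit point of $D$, then any $(k,x) \in D$ within $K^2$-distance $1$ of $(k_0,x_0)$ would satisfy $k \leq k_0 + 1 =: M$, hence $|x| \leq k \leq M$ and $j(x) \geq 1/k \geq 1/M$, placing $x$ in $J(1/M, M)$. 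Since distinct elements of $D$ have distinct second coordinates, arbitrarily close such $(k,x)$ must also have $x \neq x_0$, producing distinct points of the pseudo-finite (hence discrete) set $J(1/M, M)$ accumulating at $x_0$, a contradiction.

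Once $D$ is closed and discrete in $K^2$, it is \pn by \cite{enum}, and since \pn sets are closed under definable images, $\Disc f = \pi_2(D)$ is \pn as well.
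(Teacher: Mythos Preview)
Your argument is correct. The paper's own proof is shorter: it simply observes that for each $\eps>0$ the set $\Disc f(\eps):=\{x:\ f(x^+)-f(x^-)>\eps\}$ is discrete, writes $\Disc f=\bigcup_{\eps>0}\Disc f(\eps)$, and cites a result from \cite{enum} to the effect that a definable union of a definable family of discrete sets is \pn. Your construction is essentially a hands-on version of that citation: by packaging the stratification parameter $\max(|x|,1/j(x),1)$ as an extra coordinate you turn the family $(J(a,b))_{a,b}$ into a single closed discrete subset of $K^2$, and then project. So the underlying idea (stratify by jump size, appeal to \cite{enum}) is the same; what differs is which lemma of \cite{enum} is invoked. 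The paper's route is more economical, while yours is more self-contained and makes explicit why the graph of the stratification is discrete---in effect you are reproving, in this special case, the \cite{enum} lemma that the paper quotes. One minor simplification: since the result you cite from \cite{enum} only requires the set to be discrete (not closed), you could drop the closedness verification, though your accumulation-point argument already yields both at once.
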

\begin{proof}
For every $\eps > 0$ let
\begin{align*}
\Disc f(\eps) &:= \set{x \in K: \limsup_{y \to x}\abs{f(y) - f(x)} > \eps}\\
&=
\set{x \in K: \lim_{y \to x^+} f(y) - \lim_{y \to x^-} f(y) > \eps}.
\end{align*}
It is easy to see that $f(\Disc f(\eps))$ is discrete for every $\eps > 0$.
Thus, by Fact~\ref{lem:discrete-union}, since 
$ \Disc f = \bigcup_{\eps > 0} \Disc f(\eps)$, $\Disc f$ is \pn.
\end{proof}

\begin{defn}
Let $f: K \to K$ be a definable function.
The four \textbf{Dini derivatives} of $f$ are:
\[
\begin{aligned}
\llf(x) & := \liminf_{y \to x^-} \frac{f(y) - f(x)}{y - x}\\
\lrf(x) & := \liminf_{y \to x^+} \frac{f(y) - f(x)}{y - x}\\
\Llf(x) & := \limsup_{y \to x^-} \frac{f(y) - f(x)}{y - x}\\
\Lrf(x) & := \limsup_{y \to x^+} \frac{f(y) - f(x)}{y - x}.
\end{aligned}\]
\end{defn}

\begin{lem}\label{lem:Dini}
Let $f: K \to K$ be definable and continuous.
If, for every $x \in K$, $ \Lrf(x) \in K$ and $\Lrf$ is continuous, then $f$ is $\Cone$ (and $f' = \Lrf$).
\end{lem}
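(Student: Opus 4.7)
The plan is to establish that $f$ is differentiable everywhere with $f' = \Lrf$; since $\Lrf$ is assumed continuous, this immediately yields $f \in \Cone$. Fix $x_0 \in K$. The strategy is to sandwich the difference quotient $\frac{f(x) - f(x_0)}{x - x_0}$ between $\min_{[x_0,x]}\Lrf$ and $\max_{[x_0,x]}\Lrf$, and then squeeze by continuity of $\Lrf$ as $x \to x_0$.

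The key step is a monotonicity principle: for any $a < b$ in $K$ and any $c \in K$, if $\Lrf(x) \geq c$ for all $x \in [a,b]$ then $f(b) - f(a) \geq c(b - a)$, and symmetrically if $\Lrf(x) \leq c$ on $[a,b]$ then $f(b) - f(a) \leq c(b - a)$. I would prove the lower version by a continuous-induction argument: fix $\eps > 0$ and let
\[
E := \set{x \in [a,b] : f(x) - f(a) \geq (c - \eps)(x - a)}.
\]
Then $E$ is closed by continuity of $f$, contains $a$, and is bounded above by $b$, so $s := \sup E$ exists by definable completeness and lies in $E$. If $s < b$, then $\Lrf(s) \geq c > c - \eps$, and the definition of $\limsup$ produces some $y \in (s, b]$ with $\frac{f(y) - f(s)}{y - s} > c - \eps$; combining with $s \in E$ gives $f(y) - f(a) > (c - \eps)(y - a)$, hence $y \in E$, contradicting $s = \sup E$. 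So $s = b$, and sending $\eps \to 0$ yields the claim. The upper bound is analogous, using that $\Lrf(s) \leq c$ together with $\Lrf(s) \in K$ forces $\frac{f(y) - f(s)}{y - s} < c + \eps/2$ for \emph{all} $y$ in some right-neighborhood of $s$.

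Granted the monotonicity principle, for $x > x_0$ continuity of $\Lrf$ on the closed bounded interval $[x_0, x]$ combined with definable completeness (cf.~\cite{ivp}) provides a minimum and maximum of $\Lrf$ on that interval; applying the monotonicity principle with $c$ equal to each of these values gives
\[
\min_{[x_0,x]}\Lrf \;\leq\; \frac{f(x) - f(x_0)}{x - x_0} \;\leq\; \max_{[x_0,x]}\Lrf.
\]
Both bounds tend to $\Lrf(x_0)$ as $x \to x_0^+$ by continuity of $\Lrf$, so the right-hand derivative of $f$ at $x_0$ exists and equals $\Lrf(x_0)$; the symmetric argument for $x < x_0$ handles the left-hand derivative. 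Hence $f'(x_0) = \Lrf(x_0)$ for every $x_0$, and $f \in \Cone$ since $\Lrf$ is continuous.

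The delicate point is the monotonicity principle, specifically the asymmetry between its two halves: for the lower bound, $\limsup \geq c$ only yields \emph{some} $y > s$ realising a good difference quotient, which is exactly what the continuous-induction step needs; for the upper bound one needs the reverse inequality on an \emph{entire} right-neighborhood of $s$, and $\limsup \leq c$ combined with finiteness of $\Lrf(s)$ supplies this. The remaining ingredients---existence of extrema for continuous definable functions on closed bounded intervals and the routine $\limsup$ manipulations---are standard consequences of definable completeness and require no appeal to Theorem~A.
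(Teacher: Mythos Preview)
Your proposal is correct and is essentially the standard argument that the paper defers to via \cite[Theorem~1.3]{Bruckner}: the monotonicity principle from a one-sided Dini derivative, proved by continuous induction using definable completeness, followed by the squeeze via the extreme values of $\Lrf$ on $[x_0,x]$. Your remarks on the asymmetry between the two halves of the monotonicity step and on the purely first-order nature of the ingredients (no appeal to Theorem~A) are accurate.
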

\begin{proof}
As in \cite[Theorem~1.3]{Bruckner}.
\end{proof}

We will now adapt the classical definition of Baire class to the ``definable'' context.

\begin{defn}
Let $X \subseteq K^n$ be a definable set, $f: X \to K$ be a definable
function, and $n \in \N$.
We say that $f$ is of \textbf{definable Baire class $n$} if:
\begin{enumerate}
\item either $n = 0$ and $f$ is continuous;
\item or $n > 0$ and there exists a definable family of functions
$(f_t: X \to K)_{t \in K}$ such that each $f_t$ is of class $(n - 1)$ and
\begin{enumerate}
\item either,
for every $x \in X$, $f(x) = \lim_{t \to + \infty} f_t(x)$.
\item or, for every $x \in X$, $f(x) = \sup_t f_t(x)$;
\item or, for every $x \in X$, $f(x) = \inf_t f_t(x)$.
\end{enumerate}
\end{enumerate}
\end{defn}
In the above definition we had to add Clauses (2-b) and (2-c) to
the classical definition,
because we could not prove that a function satisfying e.g.\ Clause (2-b) would
satisfy Clause (2-a).



The interest for us of the above definition stems from the following fact.

\begin{lem}\label{lem:Dini-Baire-2}
Let $f: K \to K$ be definable and continuous.
Then, $\Lrf$ is of definable Baire class~$2$.
\end{lem}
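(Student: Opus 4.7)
The plan is to mimic the classical fact that the upper right Dini derivative of a continuous function is of ordinary Baire class~$2$, by representing
\[
\Lrf(x) = \lim_{s \to 0^+} \sup_{h \in (0,s]} \frac{f(x+h) - f(x)}{h}
\]
as a nested limit of suprema of continuous functions, and then realising each outer operation as an admissible definable Baire class step. The crucial enabling facts are that continuous definable functions on closed bounded intervals of $K$ attain their extrema and are uniformly continuous on closed bounded boxes; both are standard properties of definably complete expansions of ordered fields established in \cite{ivp}.

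First I would introduce the continuous definable map $F(x,h) := (f(x+h)-f(x))/h$ on $K \times K_{>0}$, and for $0 < u < s$ set
\[
\psi_{u,s}(x) := \max_{h \in [u,s]} F(x,h).
\]
By the two facts above the maximum exists and $\psi_{u,s}$ is continuous in $x$, and after extending $\psi_{u,s}$ definably to all of $K^3$ (say, setting it to $0$ when $0 < u < s$ fails), I obtain a definable family of continuous functions. For fixed $s$, the map $u \mapsto \psi_{u,s}(x)$ is monotone --- smaller $u$ means a larger interval $[u,s]$, hence a larger maximum --- so
\[
g_s(x) := \sup_{h \in (0,s]} F(x,h) = \lim_{t \to +\infty} \psi_{1/t,\,s}(x),
\]
which exhibits $g_s$ as a function of definable Baire class $1$ via the family $(\psi_{1/t,s})_{t \in K}$. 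A symmetric monotonicity, now in $s$, gives
\[
\Lrf(x) = \lim_{s \to +\infty} g_{1/s}(x),
\]
so $\Lrf$ is of definable Baire class $2$ via the family $(g_{1/s})_{s \in K}$.

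The step I expect to require the most care is verifying continuity of $\psi_{u,s}$ in $x$ together with the attainment of the defining maximum; both are standard consequences of definable completeness via the compactness-type theorems for closed bounded intervals in \cite{ivp}, but they should be invoked explicitly rather than taken for granted. A secondary subtlety worth flagging is that the lemma implicitly treats $\Lrf$ as a function into $K$: this requires $\Lrf(x)$ to be finite for all $x$, which in turn guarantees that each $g_s(x)$ is finite (since a finite $\Lrf(x)$ bounds $F(x,h)$ for $h$ small, and continuity of $F(x,\cdot)$ bounds it on the remaining closed interval). If this finiteness is not stipulated at the outset, the plan should be carried out on the definable set where $\Lrf(x) \in K$.
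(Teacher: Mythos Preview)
Your proposal is correct and follows essentially the same idea as the paper, but with an unnecessary extra layer. The paper's entire proof is the single display
\[
\Lrf(x) = \inf_{t > 0}\, \sup_{0 < s < t} g_s(x), \qquad g_s(x) := \frac{f(x+s)-f(x)}{s},
\]
invoking directly the $\sup$ and $\inf$ clauses of the definition of definable Baire class. Since $f$ is continuous, each difference quotient $g_s$ is already continuous in $x$, so the inner supremum is immediately a supremum of a definable family of class~$0$ functions---there is no need to pass through the compact-interval maxima $\psi_{u,s}$ or to appeal to extreme-value and uniform-continuity results. Your route via $\psi_{u,s}$ and monotone limits is valid, but it replaces a one-line application of the $\sup$/$\inf$ clauses by a longer argument establishing continuity of an auxiliary family that is never actually required. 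Your remark about the codomain is apt: the paper tacitly allows values in $K \cup \{\pm\infty\}$ (this is made explicit where the lemma is applied), so the finiteness issue you raise does not obstruct the argument.
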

\begin{proof}
For every $t \neq 0$ let $g_t(x) := \frac{f(x + t) - f(x)}{t}$.
Then,
\[
\Lrf(x) = \inf_{t > 0}\, \sup_{0 < s < t} g_s(x). \qedhere
\]
\end{proof}

\begin{defn}
Let $f: X \to K$ be a definable function.
We say that $f$ is \textbf{almost continuous} if the set of its discontinuity
points $\Disc f$ is nowhere dense.
\end{defn}

\begin{defn}
$\K$ has locally o-minimal open core if
there does not exist a definable, closed, discrete, and
unbounded subset of~$K_{\geq 0}$ (see \cite[Thm. 3.3]{local}).
\end{defn}

We could prove the following lemma only under
the assumption that $\K$ does \emph{not} have locally o-minimal open core.

\begin{lem}\label{lem:Baire-1}
Assume that $\K$ does not have locally o-minimal open core.
Let $(f_t: K^n \to [0,1])_{t \in K}$ be a definable family of
almost continuous functions.
Let $f: K^n \to [0,1]$ be either of the following functions:
\begin{enumerate}
\item $f(x) = \sup_t f_t(x)$;
\item or $f(x) = \lim_{t \to \infty} f_t(x)$.
\end{enumerate}
Then, the restriction of $f$ to the complement of a definably meager set is continuous.\\
If moreover each $f_t$ is continuous (i.e., $f$ is of definable Baire class~1),
then $\Disc(f)$ is definably meager.
\end{lem}
\begin{proof}
Minor variation of \cite[Thm.~7.3]{Oxtoby}.
Let $M \subset K_{\geq 0}$ be definable, closed, discrete, and unbounded.

Let $D_i$ be the closure of $\Disc f_{i}$, and $D := \bigcup_{i \in M} D_i$
since each $D_i$ is nowhere dense, $D$~is definably meager.
Let $X := K^n \setminus D$: notice that $X$ is dense in~$K^n$.
We claim that $f \rest X$ is continuous
outside a definably meager set.
(If each $f_i$ is continuous, then $D$ is empty,
and we also obtain the``moreover'' clause).

For every $\eps > 0$, set
\[
F_\eps := \set{a \in X: \forall \delta > 0\ \exists x \in X\
(\abs{x -a} < \delta  \ \&\ \abs{f(x) - f(a)} > 5 \eps)}.
\]
it suffices to show that  $F_\eps$ is nowhere dense.
Fix an open box $V \subseteq K^n$ and $\eps > 0$.

We prove first Case (2).
Notice that $f(x) = \lim_{t \to \infty, t \in M} f_t(x)$.
For every $i \in K$, let
\[
E_i := 
\set{x \in V: \abs{f_i(x) - f(x)} \leq \eps}.
\]
Notice that $(E_i: i \in M)$ is a definable family of subsets
of~$V$, and $\bigcup_{i \in M} E_i = V$.
Hence, by Theorem~\ref{thm:baire}, there exists $i_0 \in M$ such that the
closure of $E_{i_0}$ has nonempty interior.
Let $U \subseteq \cl(E_{i_0})$ be a nonempty open box.
Since $f_{i_0}$ is continuous on~$U \cap X$, after shrinking $U$
we can also assume that, for every $x, x'\in U \cap X$,
$\abs{f_{i_0}(x) - f_{i_0}(x')} \leq \eps$.
Thus, for every $x, x' \in U \cap X$, $\abs{f(x) - f(x')} \leq 3 \eps$, and
therefore $U \cap X \cap F_{\eps} = \emptyset$.

Thus, every nonempty open definable set $V$ contains a nonempty open set $U$
disjoint from $F_\eps \cap X$, and therefore $F_\eps \cap X$ is nowhere dense.

The proof of Case (1) is similar, using instead
\[
E_i := 
\set{x \in V: f(x) \leq f_i(x) + \eps}.
\qedhere
\]
\end{proof}

\begin{bsp}
\begin{enumerate}
\item
Notice that in the above lemma we cannot conclude that $\Disc f$ is definably
meager without also assuming that either  each $f_i$ is continuous or $\K$
is restrained (see Corollary~\ref{cor:Baire-restrained-continuous}).
In fact, it is easy to see that  the characteristic function of an \pn
set is the pointwise limit of a definable family of functions $f_i$ such that
each $\Disc(f_i)$ is pseudo-finite.
For instance, let $\mathcal R := \structure{\R, + , \cdot, <, \N}$.
Let $f: \R \to \R$ be the characteristic function of~$\Q$: then,
$f$ can be written as the limit of a definable family of functions $f_i$, with
$\Disc(f_i)$ finite for every~$i$.
\item Let $f: K^n \to K$ be a definable continuous function such that $\Lrf$ is
discontinuous on a nonmeager set: then $\Lrf$ is a function of definable Baire
class exactly~2 (i.e., not~1).  It is an easy exercise to find such a function
$f$ when $\K$ is unrestrained (cf\mbox{.} \cite[p.~42]{Bruckner}): however, we
will see later that when $\K$ is restrained such $f$ does not exist.
\item
If $X \subseteq K^n$ is a nonempty definable closed
subset, then the characteristic function of $X$ is of definable Baire class~1.
\item
If $\mathcal R$ is an unrestrained expansion of the real field, then, for each
$n \in \N$,
the ``definable Baire class $n$'' and the ``Baire class $n$'' are the same class
(because all sets in the projective hierarchy are definable in~$\mathcal R$);
therefore, by a theorem by Lebesgue \cite{Lebesgue}, for each $n$ there is a definable
function of definable Baire class exactly~$n$.
\item
Let $C \subset \R$ be a ``Cantor set'', i.e., a nonempty, definable, closed,
perfect, nowhere dense subset.
Define $f: \R \to \R$ as $f(x) = 0$ outside~$C$, $f(x) = 1/2$ on each point of
$C$ such that there exists $\eps > 0$ with either $(x, x + \eps) \cap C =
\emptyset$, or $(x -\eps, x) \cap C = \emptyset$, and $f(x) = 1$ otherwise.
Then, $f$ is of definable Baire class exactly~$2$ (cf\mbox{.} \cite[Ch.7]{Oxtoby}).
There are some restrained expansions of $\R$ defining a Cantor set as above.
We leave open the question if in the restrained case there
can be definable functions of definable Baire class greater than~2.
\item
If $\K$ is restrained and defines set $X \subset K$ which is  dense and
codense, then the
characteristic function of $X$ is not in any definable Baire class; for
instance, if $\mathcal R$ is the expansion of the real field by the set
$R^{alg}$ of real algebraic numbers, then the characteristic function of
$R^{alg}$ is of Baire class~2, but it is not in any definable Baire class.
\end{enumerate}
\end{bsp}

\subsection{Restrained structures}
\label{subsec:restrained}
In this subsection we will prove a few results about definable functions and
sets in restrained structures.
We will use them to prove the restrained case of Theorem~\ref{thm:monotone};
however, we think that some of them are of independent interest.

\begin{lem}\label{lem:nd-countable}
Let $X \subseteq K$ be definable and nowhere dense.
Then, there exists two sets $Y, Z \subset K$ discrete, definable, and such that
 $Y \subseteq X$ and $\cl(X) \subseteq \cl(Y) \cup \cl(Z)$.
Moreover, the choice of $Y$ can be made in a uniform way:
that is, if $X \subset K^{n+1}$ is definable, and for every $t \in K^n$,
$X_t$ is nowhere dense, then there exists $Y, Z \subset K^{n+1}$ definable,
such that $Y \subseteq X$ and, for every $t \in K^n$, $Y_t$ and $Z_t$ are
discrete, and $X_t \subseteq \cl(Y_t) \cup \cl(Z_t)$.
\end{lem}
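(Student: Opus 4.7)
The plan is to take $Y$ to be the set of isolated points of $C := \cl(X)$; concretely,
\[
Y := \set{\,y \in C : \exists \varepsilon > 0,\ (y - \varepsilon, y + \varepsilon) \cap C = \set{y}\,}.
\]
Then $Y$ is definable, contained in $C$, and discrete, since each $y \in Y$ has an open neighbourhood meeting $C$ (hence $Y$) only at $y$. The inclusion $\cl(Y) \subseteq C = \cl(X)$ is automatic, so the lemma reduces to proving the reverse inclusion, equivalently the density of $Y$ in~$C$.

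I plan to argue density by contradiction, invoking the restrained hypothesis of this subsection via Theorem~A. If $Y$ is not dense in~$C$, pick a nonempty bounded open interval $U$ such that $P := C \cap U$ is nonempty and contains no point isolated in~$C$; then $P$ is closed in $U$, bounded, nowhere dense, and perfect (every point is a two-sided accumulation point of $P$). From such a $P$ I aim to construct a definable discrete set $D$ and a definable function $h : D \to K$ whose image is dense in some open subinterval of~$U$. By Theorem~A\,(I) this would force $\K$ to be unrestrained, a contradiction.

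The raw material for $h$ comes from the gap structure of~$P$. For every $\varepsilon > 0$ the set $G_\varepsilon$ of maximal open subintervals of $U \setminus P$ of length at least $\varepsilon$ is pseudo-finite (bounded, closed, discrete), so $G := \bigcup_{\varepsilon > 0} G_\varepsilon$ is \pn. Because $P$ is perfect, at every point of $P$ and at every scale the gaps cluster on both sides, exhibiting a self-similar structure. A definable dyadic selection of nested gaps at successively finer scales should, after reindexing, produce an unbounded natural fragment in the sense of Section~3, and thus a definable discrete subring via Proposition~\ref{nat1}, contradicting restrained. The main obstacle is precisely this combinatorial core: naive candidates such as gap midpoints are dense only in~$P$, which is itself nowhere dense, so the construction must genuinely exploit the perfectness of~$P$ and not merely its closure and nowhere-density.

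For the uniform addendum, the construction is manifestly uniform in the parameter~$t$: set
\[
W := \set{\,(t,w) \in K^{n+1} : w \in \cl(Z_t),\ \exists \varepsilon > 0,\ (w - \varepsilon, w + \varepsilon) \cap \cl(Z_t) = \set{w}\,}.
\]
Then $W$ is definable, each $W_t$ is the set of isolated points of $\cl(Z_t)$ and hence discrete, and, by the first part applied to $Z_t$, $W_t$ is dense in $\cl(Z_t)$, so $Z_t \subseteq \cl(Z_t) = \cl(W_t)$.
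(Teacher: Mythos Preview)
Your proposal has a genuine gap that you yourself flag. After reducing to the existence of a definable nonempty perfect nowhere dense set $P$, you need to manufacture from $P$ either a definable map from a discrete set onto a somewhere dense set (to contradict Theorem~A\,(I)), or directly an unbounded natural fragment. You sketch a ``dyadic selection of nested gaps'' but then concede that ``the main obstacle is precisely this combinatorial core.'' Without this step the argument is incomplete, and filling it is not routine: the gap midpoints you consider are dense only in $P$ itself, and extracting an unbounded natural fragment from the gap structure of a perfect set is essentially reproving the hard direction of Theorem~A in a special case. As it stands, your proof is a reduction to an unproved statement of difficulty comparable to the lemma.

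The paper's argument is entirely different and much shorter: after replacing $X$ by its closure, it takes $Y$ to be the set of endpoints of the maximal subintervals of $K \setminus X$, citing \cite[\S2]{enum} for the fact that this set has the required properties. This is a direct construction that makes no appeal to Theorem~A or to the restrained hypothesis. Your isolated-points set, by contrast, \emph{needs} restrainedness even to be a candidate: in an unrestrained structure one can define a Cantor-type perfect nowhere dense set, for which your $Y$ is empty. So even if the combinatorial core could be supplied, your argument would establish the lemma only under an additional hypothesis that the paper's proof does not invoke.
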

\begin{proof}
Let $Y$ be the set of isolated points of~$X$.
W.l.o.g., we can assume that $X$ is closed and $X \subset (0,1)$.
Thus, $(0,1) \setminus X$ can be written in a unique way as a union of
disjoint open intervals; let $Z$ be the set of centers of such
intervals. 
\end{proof}

\begin{lem}\label{lem:restrained-meager}
$\K$ is restrained iff, for every $m \in \N$,
every definably meager subset of $K^m$ is nowhere dense.
\end{lem}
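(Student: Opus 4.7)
The plan is to handle the two directions of the biconditional separately.

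For $(\Leftarrow)$, I prove the contrapositive. Suppose $\K$ is unrestrained; by Proposition~\ref{nat1}, $\K$ defines a discrete subring $Z$. The second-order Peano arithmetic structure on $Z$ referenced in the introduction (and used in the unrestrained case of Theorem~\ref{thm:baire} via \cite[Lemma~6.2]{enum}) supplies a definable function $f : Z \to K$ whose image is dense in $K$. Setting $A_t := f(Z \cap [-t,t])$ for $t \in K$, each $A_t$ is pseudo-finite by Fact~\ref{imagepf}, hence discrete and nowhere dense; the family $(A_t)$ is definable and increasing; and $\bigcup_t A_t = f(Z)$ is dense in~$K$. This exhibits a definably meager subset of $K = K^1 \subseteq K^m$ that is not nowhere dense.

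For $(\Rightarrow)$, assume $\K$ is restrained and let $A = \bigcup_{t \in K} A_t \subseteq K^m$ be definably meager, with $(A_t)$ a definable increasing family of nowhere dense sets. Applying the uniform version of Lemma~\ref{lem:nd-countable} to the family of closures $(\cl(A_t))_t$ (each still nowhere dense), we obtain a definable family $(W_t)_{t \in K}$ of discrete subsets of $K^m$ with $A_t \subseteq \cl(W_t)$ for every~$t$. Setting $B := \bigcup_{t \in K} W_t$, we have $A \subseteq \cl(B)$, so it suffices to show that $B$ is nowhere dense.

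The main task, and the principal obstacle, is to exhibit $B$ as a \pn set: once this is done, the combination of Theorem~A with \cite[Proposition~6.4]{enum} (the same ingredients used in the restrained case of Theorem~\ref{thm:baire}) gives that every \pn subset of $K^m$ is nowhere dense in the restrained case, and we are done. To realise $B$ as \pn, the natural approach is to consider the ``first appearance'' map $\tau(x) := \inf\{t \in K : x \in W_t\}$, well-defined in $K$ by definable completeness and the increasing hypothesis, together with the associated definable set $\tilde W := \{(\tau(x), x) : x \in B\} \subseteq K^{m+1}$ whose fibres over $K$ are discrete; one then needs to merge these fibres into a single definable closed discrete $D \subseteq K$ admitting a definable surjection onto~$B$, using the manipulations of \pn sets developed in \cite{enum}. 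The delicate point is that $\tilde W$ itself need not be closed and discrete in $K^{m+1}$, so a plain projection does not suffice and a genuine enumeration step is required.
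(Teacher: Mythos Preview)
Your $(\Leftarrow)$ direction is correct, though heavier than necessary: the paper simply observes that every \pn set is definably meager, so the hypothesis forces every \pn set to be nowhere dense, which is exactly condition~(I) of Theorem~A, i.e.\ $\K$ is restrained. No specific construction of a dense \pn set is needed.

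Your $(\Rightarrow)$ direction has two genuine gaps.

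First, Lemma~\ref{lem:nd-countable} is stated and proved only for nowhere dense subsets of $K$ (and for one-dimensional fibres in the uniform version); it does not apply to nowhere dense subsets of $K^m$ for $m>1$. Indeed the statement is simply false there: a line in $K^2$ is closed and nowhere dense but is not the closure of any discrete set. So your first step, producing discrete $W_t\subseteq K^m$ with $A_t\subseteq\cl(W_t)$, breaks down as soon as $m\geq 2$. The paper accordingly treats $m=1$ first and then reaches higher $m$ by a separate induction, proving for each $n$ that every $D_\Sigma$ subset of $K^n$ either has interior or is nowhere dense (together with a fibrewise closure statement), following \cite[1.6]{MS}. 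Your outline omits this induction entirely.

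Second, even restricting to $m=1$, you have not shown that $B=\bigcup_{t\in K}W_t$ is \pn; you explicitly flag this as ``the delicate point'' and then leave it open. The result in \cite[\S4]{enum} that a definable union of discrete sets is \pn requires the index set itself to be closed and discrete, not all of~$K$. The paper secures this by a case split: if $\K$ has locally o-minimal open core the conclusion is already known from \cite[Theorem~3.3]{local}; otherwise there exists an unbounded definable closed discrete $D\subseteq K_{\geq 0}$, and since the family $(A_t)$ is increasing one may reindex the union over $D$ rather than~$K$, at which point \cite[\S4]{enum} applies directly. Your first-appearance map $\tau$ does not by itself give a closed discrete domain, so the sketch does not close the gap.
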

\begin{proof}\setcounter{claim}{0}
For the ``if'' direction,
let $X \subset K$ be \pn.
Then, by Lemma~\ref{lem:enum-meager}, $X$~is definably meager;
thus, by assumption,
$X$~is nowhere dense, proving that $\K$ is restrained.

For the ``only if'' direction, first we assume $m = 1$.
If $\K$ has locally o-minimal open core, then the conclusion holds (see
\cite[Theorem 3.3]{local}).
Otherwise, there exists an unbounded definable closed
discrete set $D \subset K_{\geq 0}$.
Let $X \subset K$ be definably meager; thus, $X = \bigcup_{i \in K} Y_i$,
for some $(Y_i: i \in K)$ definable increasing family of nowhere dense set.
Since $D$ is unbounded, $X = \bigcup_{i \in D} Y_i$.
By Lemma~\ref{lem:nd-countable}, there exists two definable families of discrete
sets $(Z_i: i \in D)$ and $(W_i: i \in D)$, such that, for every $i \in D$,
$Y_i \subseteq \cl(Z_i \cup W_i)$.
Let $T := \bigcup_{i \in D} Z_i \cup W_i$.
By Fact~\ref{lem:discrete-union},
$T$~is \pn, and hence nowhere dense, since $\K$ is restrained.
Since $X \subseteq \cl(T)$, we have that $X$ is nowhere dense.

Assume now that $m \geq 1$ (and $\K$ is restrained).
By induction on $n$, we show the following:
\begin{enumerate}
\item[$(1)_n$]
Every $D_\Sigma$ subset of $K^n$ has interior or is nowhere dense;
\item[$(2)_n$]
For every $p \in \N$ and $A$ $D_\Sigma$ subset of $K^{n + p}$, the set
$\set{x \in K^n: \cl(A_x) \neq \cl(A)_x}$ is definably meager in~$K^n$.
\item[$(3)_n$] If $A$ is a $D_\Sigma$ subset of $K^n$, then $\fr(A) :=
\cl(A) \setminus \interior A $ is nowhere dense.
\item[$(4)_n$] Every definably meager subset of $K^n$ is nowhere dense.
\end{enumerate}
Assertion $(4)_m$ is the conclusion of the Lemma.
Assertion $(1)_1$ is the Case $m = 1$.

The proofs of $(2)_1$ and the inductive step are as in \cite[1.6]{MS}.

More precisely, assume that we have already proved $(1)_n$;
we claim that $(2)_n$,  $(3)_n$, and $(4)_n$ also hold.
For $(3)_n$: we have
\[
\fr(A) = \fr(\interior A ) \cup \fr(A \setminus \interior A)
= \fr(\interior A ) \cup \cl (A \setminus \interior A)
\]
and each of the two pieces is a $D_\Sigma$ set with empty interior, and thus,
by $(1)_n$, nowhere dense.

For $(4)_n$, let $X \subseteq K^n$ be definably meager: that is, $X =
\bigcup_{t \in K} Y_t$, where $(Y_t: t \in K)$ is a definable increasing
family of nowhere dense subsets of~$K^n$.
For each $t \in K$, let $Z_t$ be the closure of $Y_t$ (inside $K^n$); define
$W := \bigcup_t Z_t$.
Then, $W$ is definably meager and hence, by Theorem~\ref{thm:baire}, with empty interior; moreover, $W$~is
a $D_\Sigma$ set.
Thus, by $(1)_n$, $W$~is nowhere dense, and, since $X \subseteq W$,
$X$~is also nowhere dense.

The proof of $(2)_n$ is a bit more involved.
Let $A$ be a $D_\Sigma$ subset of $K^{n+p}$ and
$B := \set{x \in K^n: \exists y \in \cl(A)_x \setminus \cl(A_x)}$.
We want to show that $B$ is definably meager.

For each open box $U \subseteq K^p$, let
$C_U := \set{\pair{x,y} \in \cl(A): y \in U \ \& \ \cl(A_x) \cap U = \emptyset}$
and $B_U := \pi(C_U)$, where $\pi: K^{n+p} \to K^n$ is the projection onto
the first $n$ coordinates.
Notice that $B$ is the union of all the $B_U$'s.
\begin{claim}\label{cl:BU}
For each open box $U$, $B_U$ is nowhere dense.
\end{claim}
In fact, let $G := \pi(A \cap (K^n \times U))$.
Then, $G$ is a $D_\Sigma$ set, and $\fr(G)$ has empty
interior (by $(3)_n$).
However, $B_U \subseteq \fr(G)$, and the claim is proved.

For each $r >0$, let
\[
D(r) := \set{\pair{x,y} \in \cl(A): \abs y \leq r \ \& \ d(y, A_x) \geq r},
\]
$E(r) := \cl(D(r))$, and $F(r) := \pi(E(r))$.
Since $B = \bigcup_{r > 0} \pi(D(r)) \subseteq \bigcup_{r>0} F(r)$, and each
$F(r)$ is closed,
it suffices to show that each $F(r)$ has empty interior.
Assume, for a contradiction, that $F(r)$ contains a nonempty
open box~$V$, for some $r > 0$.
Define $f: V \to K^p$, $f(x) := \lexmin (E(r)_x)$.
By \cite[2.8(1)]{DMS10}, the set of discontinuity points of $f$ is definably
meager; thus, by $(1)_n$, after shrinking $V$ if necessary, we can assume that
$f$ is continuous on~$V$.
Thus, $\Gamma(f)$, the graph of~$f$, is contained in $E(r)$.
After shrinking $V$ if necessary, by continuity of~$f$,
we can find an open box $U \subset K^p$
of diameter less than~$r$ and such that $f(V) \subseteq U$.

Then, $D(r)_U := D(r) \cap (K^n \times U) \subseteq C_U$, and therefore
\[
V \subseteq \pi\Pa{\cl(D(r)_U)}
\subseteq \cl\Pa{\pi(D(r)_U)}
\subseteq \cl(B_U),
\]
contradicting Claim~\ref{cl:BU}.

Finally, assume that we have already proved all the statements for every $n' < n$; we
want to prove $(1)_n$.
Let $A \subset K^n$ be a $D_\Sigma$ set with empty interior;
we want to prove that $A$ is nowhere dense.
Notice that $A$ is definably meager; thus,
by Lemma~\ref{fact:KU}, the set of points $x \in K^{n-1}$ such that $A_x$ has
nonempty interior is definably meager; hence, by $(1)_1$ and $(4)_{n-1}$,
the set of points $x \in K^{n-1}$ such that $A_x$ is somewhere dense
is nowhere dense.
By $(2)_{n-1}$, the set of points $x \in K^{n-1}$ such that
$\cl(A)_x$ has interior is nowhere dense.
Hence, $\cl(A)$ has empty interior.
\end{proof}

\begin{cor}\label{cor:Baire-restrained-continuous}
Let $\K$ be restrained and without locally o-minimal open core, $n, m \in \N$,
and $f: K^m \to K$ be of definable Baire class~$n$.
Then, $f$ is almost continuous.
\end{cor}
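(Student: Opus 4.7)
The plan is to proceed by induction on the Baire class~$n$.  For the base case $n = 0$, by definition $f$ is continuous, so $\Disc f = \emptyset$ and there is nothing to prove.

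For the inductive step, assume every definable function of definable Baire class $n - 1$ on any $K^m$ is almost continuous, and let $f \colon K^m \to K$ be of definable Baire class~$n$.  The first step is to reduce to the case of a bounded codomain: pick a definable order-preserving homeomorphism $\psi \colon K \to (0,1)$ (e.g.\ $x \mapsto \tfrac{1}{2} + \tfrac{x}{2(1+|x|)}$), and replace $f$ by $\psi \circ f$.  Since $\psi$ is continuous and monotonic it commutes with $\lim$, $\sup$ and $\inf$, so $\psi \circ f$ is still of definable Baire class~$n$, and $\Disc(\psi \circ f) = \Disc f$.  This lets us work with $[0,1]$-valued functions, which is the setting of Lemma~\ref{lem:Baire-1}.

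Next, unfold the definition: $f$ is either $\lim_t f_t$, $\sup_t f_t$, or $\inf_t f_t$ for a definable family $(f_t \colon K^m \to [0,1])_{t \in K}$ of class $n - 1$; the $\inf$ case reduces to the $\sup$ case via $f_t \mapsto 1 - f_t$, which preserves almost continuity.  By the inductive hypothesis each $f_t$ is almost continuous, so Lemma~\ref{lem:Baire-1} applies and gives that $\Disc f$ is definably meager.  Because $\K$ is restrained, Lemma~\ref{lem:restrained-meager} then upgrades this to $\Disc f$ being nowhere dense, which is precisely the statement that $f$ is almost continuous.

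The whole argument is essentially a clean chaining of Lemmas~\ref{lem:Baire-1} and~\ref{lem:restrained-meager} threaded through an induction; the one mild obstacle is the bookkeeping around the $K \to [0,1]$ reduction, namely verifying that the definable Baire-class presentation is preserved by composition with $\psi$ and that the inf case can be flipped to a sup.  Beyond this, no deeper difficulty is anticipated: restrainedness does all the heavy lifting via Lemma~\ref{lem:restrained-meager}.
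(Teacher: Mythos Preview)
Your proposal is correct and follows essentially the same approach as the paper, which simply cites induction on~$n$ together with Lemmas~\ref{lem:Baire-1} and~\ref{lem:restrained-meager}. You have correctly supplied the details the paper omits, in particular the reduction to $[0,1]$-valued functions needed to invoke Lemma~\ref{lem:Baire-1} and the handling of the $\inf$ case.
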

\begin{proof}
By induction on $n$, Lemma~\ref{lem:Baire-1}, and
Lemma~\ref{lem:restrained-meager}.
\end{proof}

\begin{lem}\label{lem:restrained-Cone}
Let $\K$ be restrained, $U \subseteq K^n$ be open and definable,
$f: U \to K$ be a definable continuous function,
and $p \in \N$.
Then, $f$ is $\mathcal C^p$ on a dense open subset of~$U$.
\end{lem}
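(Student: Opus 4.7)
The plan is to argue by induction on $p$, the base case $p = 0$ being the hypothesis. For the inductive step, it suffices to handle the case $p = 1$ (for functions on arbitrary open definable domains): once $f$ is $\Cone$ on a dense open $U \subseteq K^n$, each partial derivative $\partial_i f : U \to K$ is a definable continuous function, so applying the inductive hypothesis with $p - 1$ would yield $\partial_i f \in \mathcal C^{p-1}$ on a dense open subset of $U$; intersecting over $i$ would then give $f \in \mathcal C^p$ on a dense open subset of $U$, which is dense open in $K^n$.

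For the reduction from $n$ variables to one when $p = 1$, I would handle each coordinate separately. Introduce the partial one-sided Dini derivative $\Lambda^i_r f(x) := \limsup_{h \to 0^+} (f(x + h e_i) - f(x))/h$, and analogously $\Lambda^i_\ell f$, $\lambda^i_r f$, $\lambda^i_\ell f$. Composing with the bounded bijection $\phi\colon K \cup \{\pm\infty\} \to [-1, 1]$, $y \mapsto y/(1+|y|)$, to side-step possibly infinite values, each composition becomes a definable function of Baire class $2$ by the argument of Lemma~\ref{lem:Dini-Baire-2}; by Corollary~\ref{cor:Baire-restrained-continuous} each is continuous on a dense open set. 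Together with the finiteness argument from the one-variable case discussed below, this yields a dense open $V_i \subseteq K^n$ on which $\partial_i f$ exists and is continuous. Setting $V = \bigcap_i V_i$, the standard multivariable criterion gives $f \in \Cone$ on~$V$.

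The core case is $n = p = 1$. Let $U$ be the dense open common continuity set of $\phi \circ \Lrf$, $\phi \circ \Llf$, $\phi \circ \lrf$, $\phi \circ \llf$, each almost continuous by Corollary~\ref{cor:Baire-restrained-continuous}. If I can refine $U$ to a dense open set $V$ on which $\Lrf$ is finite, then $\Lrf \colon V \to K$ is continuous (since $\phi \colon (-1,1) \to K$ is a homeomorphism), so Lemma~\ref{lem:Dini} applied on each component sub-interval of $V$ will give $f \in \Cone$ on $V$, as required.

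The main obstacle is ruling out the possibility that $\Lrf \equiv +\infty$ (or $-\infty$) on some open sub-interval $W \subseteq U$. Granting this pathology, for each $x \in W$ and each positive integer $n$ the set $\{t \in (0, 1/n] : f(x+t) - f(x) \geq n t\}$ is non-empty and closed, so has a maximum $t_n(x)$ by Fact~\ref{unbounded}. I would use these definable witnesses, together with an iterated chain $x_{k+1} = x_k + t_n(x_k)$ starting from some $x_0 \in W$, to extract a definable closed discrete subset of $K$ and a definable function whose image is dense in some interval, contradicting Fact~\ref{nointerior} (equivalently, Theorem~A in the restrained case). Making this extraction uniformly definable, and ensuring that the resulting set is indeed closed and discrete in $K$ rather than merely having $0$ as an accumulation point of its witness scales, is the main technical challenge, in the spirit of the natural-fragment extractions of Section~3.
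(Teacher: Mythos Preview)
Your overall strategy---induction on $p$, reduction to $p=1$, recognising the Dini derivatives as definable Baire-class-$2$ functions, and invoking Corollary~\ref{cor:Baire-restrained-continuous}---matches the paper's. The gap is in the finiteness step. To rule out $\Lrf \equiv +\infty$ on an interval you propose iterating $x_{k+1} = x_k + t_n(x_k)$ and extracting a closed discrete set with dense image. But this iteration is a recursive definition, and recursion is precisely what you \emph{do not} have in the restrained case: the machinery of \S\ref{sub:Peano} becomes available only after a discrete subring has been defined. So the sequence $(x_k)$ is not a definable object here, and you yourself flag making the extraction uniformly definable as the unresolved ``main technical challenge.''

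The paper avoids this entirely with an elementary Rolle-type argument. If $\Lrf \equiv +\infty$ on some $[a,b] \subseteq U$, subtract the secant line so that $f(a)=f(b)$; by definable completeness the continuous definable function $f$ attains its maximum on $[a,b]$ at some interior point $x_0$, and there $\Lrf(x_0) \leq 0$, a contradiction. The case $\Lrf \equiv -\infty$ is handled symmetrically with an interior minimum. This two-line argument replaces your extraction scheme completely. Once $\Lrf$ is finite and continuous on a dense open set, Lemma~\ref{lem:Dini} alone yields $f \in \Cone$ there---you do not need the other three Dini derivatives in the $n=1$ case. For $n>1$ the paper then checks that the four directional Dini derivatives agree on a dense open set: if not, by continuity they would differ on an open box, contradicting the $n=1$ result applied along a coordinate line. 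Your proposal passes over this coincidence step by appealing to ``the standard multivariable criterion,'' which presupposes that the partial derivatives exist rather than merely that $\Lambda^i_r f$ is finite and continuous.
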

\begin{proof}\setcounter{claim}{0}
Let $B \subseteq U$ be an open box; it suffices to prove the result for $f
\rest B$; since $B$ is diffeomorphic to $K^n$ itself, it suffices to treat the
case when $U = K^n$.

If $\K$ has locally o-minimal open core, then, since $f$ is definable in the
open core of~$\K$, the conclusion follows from \cite[Theorem~5.11]{local}.

Otherwise, by induction, it suffices to treat the case $p = 1$.
First, we do the case $n = 1$.
By Lemma~\ref{lem:Dini-Baire-2}, $\Lrf$ is of definably Baire class $2$.
By Corollary~\ref{cor:Baire-restrained-continuous},
$\Lrf: K \to K \cup \set{\pm \infty}$ is continuous on a dense open
set~$U$, but may take value infinity somewhere.
\begin{claim}
Let $V := \set{x \in U: \Lrf(x) \in K}$.
Then, $V$~is open and dense.
\end{claim}
If not, since $\Lrf$ is continuous on $U$,
there would exist an interval $[a, b] \subseteq U$ such that
\begin{enumerate}
\item either for every $x \in [a, b]$, $\Lrf(x) = + \infty$,
\item or, for every $x
\in [a, b]$, $\Lrf(x) = - \infty$.
\end{enumerate}
By replacing $f(x)$ with $f(x) - \frac{f(b) - f(a)}{b - a} (x - a)$, w.l.o.g.\ we
can assume that $f(b) = f(a)$.
Thus, since $f$ is continuous and definable, there exists $x_0 \in (a,b)$ that
is a maximum for $f$ in $[a,b]$; but then $\Lrf(x_0) \leq 0$, contradicting
Case~(1).
Similarly, there exists $x_1 \in (a,b)$ that is a minimum for $f$ in $(a,b)$,
contradicting Case~(2).
Finally, by Lemma~\ref{lem:Dini}, $f$ is $\Cone$ on~$V$.

Assume now that $n > 1$.
We will prove that, outside some nowhere dense set, each partial derivative of
$f$ exists and is continuous; it suffices to show that $\partial f / \partial
x_n$ exists and is continuous on a dense open set.
Let $\bar e_n := \< 0, \dotsc, 0, 1 \> \in K^n$.
Define the Dini derivatives of $f$ in the direction $\bar e_n$ as $\Lrf :=
\limsup_{t \to   0^+} \frac{f(x + t \bar e_n) - f(x)} t$, and similarly for
the other three Dini derivatives.
Reasoning as in the case $n = 1$,  we see that $\Lrf$ is finite and continuous
on a dense open set~$U$, and similarly for the other three Dini derivatives.
It then suffices to show that, after maybe shrinking $U$
to a smaller dense open definable set, the four Dini derivatives coincide; by
symmetry, it suffices to prove that $\llf  = \Lrf$ on a dense open set.
Assume not: then, by continuity, there would exists an open set $V$ such that
$\llf(x) \neq \Lrf(x)$ for every $x \in V$; but this contradicts the case $n = 1$.
\end{proof}

\section{Lebesgue's Theorem}
\label{sec:thmB}
We give now an application of Theorem A, by proving the following
analogue of Lebesgue's theorem.
Remember that we call $\K$ unrestrained if it defines a discrete subring
(with~$1$), and restrained otherwise.

\begin{thmB}
Let $f: K \to K$ be a definable monotone function.
Then, $f'(x)$ exists and is in $K$ (i.e., not $\pm \infty$)
on a dense subset of~$K$.
\end{thmB}

The reasons we chose this example are that it is interesting in its own
right (it was conjectured in \cite{ivp}), and it gives a good
illustration of how Theorem A can be used to transfer
various classical results from $\R$ to~$\K$. Theorem A allows us to reduce
the proof of the above Theorem to structures
satisfying either condition (I) or (II) of Theorem A.

\subsection{The restrained case}

We will now give a proof of Theorem~\ref{thm:monotone} in the case when $\K$ is restrained.

The theorem in the restrained case follows immediately from the results in
\S\ref{subsec:restrained} plus the following lemma.
\begin{lem}\label{lem:restrained-continuous}
Let $\K$ be restrained; let $f: K \to K$ be a definable monotone function.
Then, there exists a definable closed nowhere dense set $C$ such that $f$ is
continuous outside~$C$.
\end{lem}
\begin{proof}
Let $D$ be the set of discontinuity points of $f$, and $C$ be its closure.
By Lemma~\ref{lem:monot-cont}, $D$ is \pn;
by Theorem~A, $C$ is nowhere dense.
\end{proof}


The following corollary concludes the proof of Theorem~B in the case when
$\K$ is restrained.
\begin{cor}
Let $\K$ be restrained; let $f: K \to K$ be a definable monotone function.
Then, $f$ is $\Cone$ outside a nowhere dense set.
\end{cor}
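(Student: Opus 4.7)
The plan is to combine the two preceding lemmas in a straightforward way. First, I would invoke Lemma~\ref{lem:restrained-continuous} to produce a closed nowhere dense set $C \subseteq K$ such that $f$ is continuous on the open dense set $U := K \setminus C$. This takes care of the discontinuity points.

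Next, on $U$ the function $f$ is a definable continuous (monotonic) function, so I would apply the conclusion of Lemma~\ref{lem:restrained-Cone} in the case $n = 1$ to obtain a dense open subset $V \subseteq U$ on which $f$ is $\Cone$. The lemma is stated for functions on all of $K^n$, but its proof for $n = 1$ works interval by interval (via Dini derivatives and the local $\Cone$ criterion of Lemma~\ref{lem:Dini}), so it applies verbatim on each connected component of $U$. Alternatively, one can argue directly: $\Lrf$ is defined on $U$ and is of definable Baire class~$2$ there, Corollary~\ref{cor:Baire-restrained-continuous} plus the maximum/minimum argument from the proof of Lemma~\ref{lem:restrained-Cone} shows that $\Lrf$ is finite and continuous on a dense open $V \subseteq U$, and Lemma~\ref{lem:Dini} then yields $f \in \Cone(V)$.

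Finally, since $C$ is nowhere dense in $K$ and $V$ is open and dense in $U$, the set $V$ is open and dense in $K$ as well, so its complement $K \setminus V = C \cup (U \setminus V)$ is closed with empty interior, hence nowhere dense. Thus $f$ is $\Cone$ outside this nowhere dense set.

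I do not expect any real obstacle: essentially all the work has already been done in the two lemmas. The only minor point requiring care is to justify that Lemma~\ref{lem:restrained-Cone} can be used even though $f$ is only continuous on the open set $U$ rather than on all of $K$; this is immediate because the argument is local.
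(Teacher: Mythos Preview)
Your proposal is correct and follows exactly the route the paper takes: the paper's proof consists solely of the line ``By Lemmas~\ref{lem:restrained-continuous} and~\ref{lem:restrained-Cone}'', and you have simply unpacked what that entails. Your observation that Lemma~\ref{lem:restrained-Cone} is stated for functions on all of $K$ but must be applied on the open set $U$ is a fair point the paper glosses over, and your justification (the argument is local, working interval by interval) is adequate.
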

\begin{proof}
By Lemmas~\ref{lem:restrained-continuous} and~\ref{lem:restrained-Cone}.
\end{proof}

\subsection{Measure theory}
\label{sub:measure}
Let us examine now the case when $\K$ defines a discrete subring~$Z$.
Using the results in \S\ref{sec:Peano}, we can transfer the tools of measure theory.
We will sketch the relevant ideas in the following (cf.\ \cite[\S
X.1]{Simpson} for a different approach).
Many of the definitions make sense also in the case when $\K$ is restrained:
therefore in this subsection, unless said
otherwise, we are not assuming that $\K$ is unrestrained.

\begin{defn} Let $D \subset K_{\geq 0}$ be a nonempty closed discrete definable set, and
let $s_D$ be defined as in Definition~\ref{def:successor}.
Let $h: D \to K$ be a definable function. We define $H: D \to K$ to be function given recursively by
$H(\min(D)) = 0$ and for every $d \in D$ with $d \neq \max(D)$,
$H(s_D(d)) = H(d) + h(d)$.
If $h$ takes only nonnegative values and $H$ exists, we denote
\[
\sum_{d \in D} h(d) := \sup_{d \in D} H(d) \in K_{\geq 0} \cup\set{+ \infty}.
\]
\end{defn}
It is easy to see that if $H$ is definable, then it is unique. Moreover, if
 $\K$ is unrestrained, then $H$ exists by Corollary \ref{cor:recursive}.

\begin{defn}[Lebesgue measure]
Let $a < b \in K\cup \set{\pm \infty}$; we set
$\abs{(a, b)} := b - a$.
Let $\mathcal U := \Pa{I_d: d \in D}$ be a definable family of open intervals,
indexed by a closed discrete set $D \subseteq K_\geq 0$.
We define $M(\mathcal U) := \sum_{d \in D} \abs{I_d}$ (if it exists).
\end{defn}

Let $A \subseteq K$ be a definable set.
We denote by $\mu(A)$ the infimum of
$M(\mathcal U)$, as $\mathcal U$ varies among all the definable coverings of
$A$ by open intervals, indexed by some definable discrete subset of $K_{\geq
  0}$, such that $M(\mathcal U)$ exists.
Notice that $\mu(A)$ may not lie in $K$ (since it is the infimum of a set
that may not be definable), but in the Dedekind-MacNeille completion
of~$\structure{K, <}$.
Notice also that $0 \leq \mu((0,1)) \leq 1$.

Notice that when $\K$ expands $\<\R, +, \cdot, \N \>$, then $\mu(X)$ is the outer Lebesgue measure of~$X$.

\begin{conjecture}
$\mu((0,1)) = 1$.
\end{conjecture}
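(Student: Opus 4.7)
The upper bound $\mu((0,1)) \le 1$ follows immediately from the single-interval family $(-\varepsilon, 1+\varepsilon)$ indexed by $D = \{0\}$, so the substance of the conjecture is the lower bound $\mu((0,1)) \ge 1$. My plan is to combine a definable version of Heine--Borel with a Vitali-type inequality for pseudo-finite covers. More precisely, for every $\varepsilon \in K_{>0}$ and every cover $\mathcal U = (I_d)_{d \in D}$ of $(0,1)$ with $M(\mathcal U) \in K$, the cover $\mathcal U$ automatically covers $[\varepsilon, 1-\varepsilon]$, and it suffices to show $M(\mathcal U) \ge 1 - 2\varepsilon$.

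For the definable Heine--Borel step, given a cover $(I_d)_{d \in D}$ of $[a,b]$ indexed by a closed discrete set $D \subseteq K_{\ge 0}$, I would show that some $(I_d)_{d \in D \cap [0,M]}$ already covers $[a,b]$. The proof considers the definable set
\[
B := \bigl\{ x \in [a,b] : \exists M \in K,\ [a,x] \subseteq \textstyle\bigcup_{d \in D_{\le M}} I_d \bigr\},
\]
which contains $a$, is downward closed, and therefore has a supremum $s \in [a,b]$ by definable completeness. If $s < b$, taking an $M$ witnessing some $s - \delta \in B$ and enlarging it by a single index $d^* \in D$ with $s \in I_{d^*}$ covers a right neighborhood of $s$, contradicting maximality; hence $s = b \in B$, producing the desired pseudo-finite subcover.

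For the pseudo-finite Vitali step, I would run the standard greedy walk on a pseudo-finite cover $(I_d)_{d \in D}$ of $[a,b]$: set $b_0 := a$, and recursively choose $d_{k+1} \in D$ maximizing the right endpoint of $I_d$ among those with $b_k \in I_d$ (the maximum is attained by Fact~\ref{imagepf}), letting $b_{k+1}$ be that endpoint. The right endpoints strictly increase, so the $d_k$ are pairwise distinct, and the iteration terminates at some $k_0$ with $b_{k_0} \ge b$. Consecutive intervals overlap by construction, so $\bigcup_{k \le k_0} I_{d_k}$ is a single open interval of length at least $b-a$, giving $\sum_{k \le k_0} |I_{d_k}| \ge b-a$ and hence $\sum_{d \in D} |I_d| \ge b - a$. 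Combining this with the previous step applied to $[\varepsilon, 1-\varepsilon]$ and letting $\varepsilon \to 0$ yields the conjecture.

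The main difficulty is making the greedy recursion rigorous. In the unrestrained case this is immediate from Corollary~\ref{cor:recursive}, and the termination of the walk follows because an injective definable map from $N$ into the pseudo-finite set $D$ is impossible. In the restrained case no discrete subring (hence no $N$) is available to index the recursion, so one must avoid external recursion altogether: for instance by characterizing the greedy chain directly as a minimal pseudo-finite subset $S \subseteq D$ closed under the greedy-successor operation, and verifying by pseudo-finite induction on $D$ that such an $S$ is definable and that the associated right endpoints exhaust $[a,b]$. Turning this into a first-order construction without a natural-number index is, in my view, the real content of the conjecture and appears to require new ideas beyond the facts recalled in the paper.
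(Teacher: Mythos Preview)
The statement you are addressing is explicitly a \emph{Conjecture} in the paper: the authors do \emph{not} prove it. What they do prove, a few lines later and only under the additional hypothesis that $\K$ is unrestrained, is the separate Proposition ``Let $\K$ be unrestrained. Then $\mu((0,1)) = 1$'', with the proof left to the reader. So there is no ``paper's own proof'' to compare against for the general statement.

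Your proposal reflects this situation accurately. The Heine--Borel step and the greedy Vitali walk you outline are exactly the classical ingredients, and in the unrestrained case they go through for the reasons you give: Corollary~\ref{cor:recursive} supplies the recursion for the greedy sequence $(d_k)$, and termination follows because a non-terminating walk would yield a definable injection of $N$ into the pseudo-finite set $D$, whence by Fact~\ref{imagepf} (applied to the inverse on the image) $N$ would itself be pseudo-finite, a contradiction. This is precisely the argument the paper has in mind for the unrestrained Proposition.

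Your final paragraph is the honest and correct diagnosis. In the restrained case there is no definable $N$ to index the greedy walk, and the paper's recursion machinery (Lemma~\ref{lem:recursive}, Corollary~\ref{cor:recursive}) is simply unavailable. Note that the difficulty is not only in \emph{running} the greedy walk: even comparing $M(\mathcal U)$ with the total length of a pseudo-finite subcover requires forming partial sums over that subcover, which is itself a recursion. Your suggestion of characterising the greedy chain as a minimal definable $S \subseteq D$ closed under the greedy-successor operation is natural, but establishing existence and the needed inequality by pseudo-finite induction, without ever invoking an external index set, is exactly the missing idea --- and the paper agrees, which is why the general statement is recorded as a conjecture rather than a theorem. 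A resolution would also likely settle the paper's second conjecture on commutativity of definable sums, which isolates the same obstruction.
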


However, things are much simpler if $\K$ unrestrained.
In that case, $M(\mathcal U)$ always exists, and we can always assume that
the index set of $\mathcal U$ is either $N$ or an initial segment of~$N$
(more precisely, for every definable closed discrete subset
$D \subset K_{\geq 0}$ there is a unique definable increasing bijection
between a unique initial segment of $N$ and~$D$).

Moreover, the family of definable covers of a given definable set $A$ by open
intervals indexed by $N$ is itself definable (by Lemma~\ref{lem:delta}),
and therefore $\mu(A) \in K_{\geq 0} \cup \set{+ \infty}$.
Moreover, again by using Lemma~\ref{lem:delta}, if $(A_i: i \in I)$ is a
definable family, then $f: i \mapsto \mu(A_i)$ is a definable function.

\begin{prop}
Let $\K$ be unrestrained.
Then, $\mu((0,1)) = 1$.
\end{prop}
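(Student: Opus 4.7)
The plan is to prove $\mu((0,1)) \geq 1$; the reverse inequality is immediate from covering $(0,1)$ by itself. Fix any $\eta \in K_{>0}$ with $\eta < 1/2$ and any definable covering $\mathcal U = (I_i)_{i \in N}$ of $(0,1)$ by open intervals for which $M(\mathcal U)$ is finite (otherwise there is nothing to prove). Since $\eta$ is arbitrary, it suffices to show $M(\mathcal U) > 1 - 2\eta$. The covering restricts to one of the closed bounded interval $[\eta, 1-\eta]$, so the strategy is first to establish a definable Heine--Borel property yielding a pseudo-finite subcover, and then to extract from it a chain of intervals whose lengths already sum to more than $1-2\eta$.

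For the Heine--Borel step, I would consider the definable set
\[
S := \{\, x \in [\eta, 1-\eta] : \exists\, n \in N,\ [\eta, x] \subseteq \bigcup_{i \leq n} I_i \,\}
\]
and take $s := \sup S$, which exists by definable completeness. Picking $j \in N$ with $s \in I_j$ and $\delta \in K_{>0}$ with $(s-\delta, s+\delta) \subseteq I_j$, and combining $j$ with a witness $m \in N$ for some $x \in S$ above $s - \delta$, one sees that $n := \max(m,j)$ witnesses membership in $S$ of every point of $[\eta, \min(s+\delta/2, 1-\eta)]$. This forces $s = 1-\eta$ and $1-\eta \in S$, yielding some $n \in N$ with $[\eta, 1-\eta] \subseteq \bigcup_{i \leq n} I_i$.

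For the chain step, I would use Corollary~\ref{cor:recursive} to build recursively a sequence $i_0, i_1, \dotsc$ of indices in $\{0, \dotsc, n\}$ as follows: choose $i_0$ so that $\eta \in I_{i_0}$, and, writing $I_{i_k} = (a_k, b_k)$, whenever $b_k < 1-\eta$ choose $i_{k+1}$ (say, the least such) so that $b_k \in I_{i_{k+1}}$. Since each $I_{i_{k+1}}$ is open and contains $b_k$, the sequence $(b_k)$ is strictly increasing inside the pseudo-finite set of right endpoints of $I_0, \dotsc, I_n$, so it terminates at some stage $m$ with $b_m \geq 1-\eta$, and the indices $i_0, \dotsc, i_m$ are pairwise distinct. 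Telescoping then gives
\[
M(\mathcal U) \geq \sum_{k=0}^m (b_k - a_k) = (b_m - a_0) + \sum_{k=0}^{m-1}(b_k - a_{k+1}) > b_m - a_0 > 1 - 2\eta,
\]
where the strict inequalities use $a_{k+1} < b_k$ for each $k < m$, together with $a_0 < \eta$ and $b_m \geq 1-\eta$. The main delicate point is that the recursive construction of the chain must be carried out definably; this is precisely what Corollary~\ref{cor:recursive} affords in the unrestrained setting, and without it the argument would not survive.
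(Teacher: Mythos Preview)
Your argument is correct and is precisely the classical proof of $\mu((0,1))=1$ transported to the definably complete setting, which is exactly what the paper asks the reader to do (``a minor modification of the classical one\dots left to the reader''). The Heine--Borel step via $\sup S$ and the chain-of-intervals telescoping are standard, and you correctly flag that the recursive construction of the chain needs Corollary~\ref{cor:recursive} (or Lemma~\ref{lem:recursive}, since the values are in~$N$); termination of the chain then follows from pigeonhole in~$N$, which is available since $N$ models Peano arithmetic.

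One point you pass over quickly is the inequality $M(\mathcal U) \geq \sum_{k=0}^m |I_{i_k}|$. Since the $i_k$ are distinct elements of $N_{\leq n}$, this reduces to comparing a finite sub-sum with the full partial sum $\sum_{i \leq n}|I_i|$, which in turn is bounded by $M(\mathcal U)$. That comparison is exactly a (finite) instance of the ``commutativity of addition'' lemma the paper singles out as the ingredient on which the proof should be based; in your chain formulation it is a straightforward PA induction, but it is worth saying so explicitly.
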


The proof of the above proposition is a minor modification the classical one
that $(0,1)$ has Lebesgue measure~$1$, and is left to the reader;
he can base it on the following result, whose proof is also left to the reader.

\begin{lem}[Commutativity of addition]
Let $\K$ be unrestrained.
Let $h : N \to K_{\geq 0}$ be a definable function, and
$\sigma: N \to N$ be a definable bijection.
Then, $\sum_{d \in N} h(d) = \sum_{d \in N} h(\sigma(d))$.
\end{lem}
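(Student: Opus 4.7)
The plan is to adapt the standard proof of unconditional convergence for rearrangements of nonnegative series: bound each partial sum of the rearranged series by some partial sum of the original, pass to the supremum, and invoke symmetry. First I would define the two partial-sum functions $S, T : N \to K_{\geq 0}$ by $S(0) = h(0)$, $S(n+1) = S(n) + h(n+1)$ and $T(0) = h(\sigma(0))$, $T(n+1) = T(n) + h(\sigma(n+1))$; both are definable by Corollary \ref{cor:recursive} (after storing the current index in the state, a standard recursion trick). Since $h \geq 0$, both sequences are non-decreasing along $N$, so
\[
\sum_{d \in N} h(d) = \sup_{n \in N} S(n), \qquad \sum_{d \in N} h(\sigma(d)) = \sup_{n \in N} T(n),
\]
as elements of $K_{\geq 0} \cup \{+\infty\}$.

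The central reduction is to prove $T(n) \leq S(M_n)$ for every $n \in N$, where $M_n := \max \sigma(N_{\leq n})$. The maximum exists by Fact \ref{imagepf}: the initial segment $N_{\leq n}$ is pseudo-finite (closed, discrete, and bounded by $n$), hence so is $\sigma(N_{\leq n})$. Granting the bound, $T = \sup_n T(n) \leq \sup_n S(M_n) \leq S$, and the reverse inequality $S \leq T$ follows by running the whole argument with $\sigma^{-1}$ in place of $\sigma$ and $h \circ \sigma$ in place of $h$.

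The main obstacle is that the inequality $T(n) \leq S(M_n)$ quietly invokes commutativity of pseudo-finite summation. I would dispose of it by a Peano-arithmetic induction inside $N$, legitimate since $N$ models first-order Peano arithmetic by Proposition \ref{nat1}. The auxiliary statement to prove by induction is: for every pseudo-finite $F \subseteq N$, the quantity $\Sigma(F) := \sum_{j \in F} h(j)$, defined by recursion along the successor $s_F$ of Definition \ref{def:successor} starting at $\min F$, is (i) independent of the chosen enumeration of $F$ and (ii) monotone in $F$ with respect to inclusion, where (ii) uses $h \geq 0$. Applied with $F := \sigma(N_{\leq n})$ and $F' := N_{\leq M_n}$, this gives $T(n) = \Sigma(F) \leq \Sigma(F') = S(M_n)$, completing the proof.
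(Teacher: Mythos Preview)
The paper leaves this proof to the reader, so there is nothing explicit to compare against; your outline is the standard rearrangement argument for nonnegative series and is correct in substance. One point deserves sharpening: your auxiliary claims (i) and (ii) are phrased as properties of \emph{all} pseudo-finite $F \subseteq N$ and \emph{all} enumerations of~$F$, which as written is not a first-order predicate on~$N$, so ``Peano-arithmetic induction inside $N$'' does not apply directly. You can either invoke the coding of Lemma~\ref{lem:delta} to internalize the quantifier over enumerations, or, more economically, avoid the general statement altogether: define the two-parameter partial sum
\[
U(n,m) := \sum_{\substack{j \leq m \\ j \in \sigma(N_{\leq n})}} h(j)
\]
by recursion in $m$ via Corollary~\ref{cor:recursive}; then prove $U(n,m) \leq S(m)$ by induction on~$m$ (immediate from $h \geq 0$), and $U(n, M_n) = T(n)$ by induction on~$n$, splitting on whether $\sigma(n+1) > M_n$ or $\sigma(n+1) < M_n$. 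This route stays entirely within $\K$-definable predicates of single elements of~$N$, so Fact~\ref{minmax} supplies the needed induction principle without any appeal to set coding.

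A harmless bookkeeping remark: the paper's partial-sum function $H$ has $H(\min N) = 0$, so your $S(n)$ is the paper's $H(s_N(n))$; the suprema agree.
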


Notice that we are not able to prove the above lemma without
the assumption that $\K$ defines a discrete subring.

\begin{conjecture}
Let $D \subseteq K_{\geq 0}$ be a definable closed discrete subset;
let $h: D \to K_{\geq 0}$ be a definable function, and
$\sigma: D \to D$ be a definable bijection.
Then, $\sum_{d  \in D} h(d) = \sum_{d \in D} h(\sigma(d))$
(i.e., if the sum on the left exists, then also the one on the right exists
and is equal to it).
\end{conjecture}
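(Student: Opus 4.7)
My plan is to invoke Theorem~A and treat the unrestrained and restrained cases separately.

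In the unrestrained case the conjecture essentially reduces to the preceding lemma. Fix the discrete subring $Z$ and its nonnegative part $N$. As noted after the definition of Lebesgue measure, there is a unique definable increasing bijection $\iota\colon I \to D$ with $I \subseteq N$ an initial segment. The bijection $\sigma$ pulls back to $\sigma' := \iota^{-1}\circ\sigma\circ\iota\colon I \to I$, and the recursion of Corollary~\ref{cor:recursive} gives
\[
\sum_{d \in D} h(d) = \sum_{i \in I} h(\iota(i)) \quad\text{and}\quad \sum_{d \in D} h(\sigma(d)) = \sum_{i \in I}(h\circ\iota)(\sigma'(i)).
\]
If $I = N$ the preceding commutativity lemma closes the argument; if $I$ is a bounded initial segment of $N$ then $D$ is pseudo-finite and one concludes by a standard induction inside~$N$.

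In the restrained case the strategy is to build $H_{h\circ\sigma}$ pointwise using pseudo-finite sums and to bound it by $\sum_D h$. For each $d\in D$ the set $D_{\leq d}$ is pseudo-finite, and granting that sums over pseudo-finite sets are well-defined as definable functions of their parameters, one sets
\[
H_{h\circ\sigma}(d) \;:=\; \sum_{e \in D_{\leq d}} h(\sigma(e)).
\]
A reindexing property applied to the bijection $\sigma|_{D_{\leq d}}\colon D_{\leq d} \to \sigma(D_{\leq d})$ rewrites this as $\sum_{e' \in \sigma(D_{\leq d})} h(e')$. Since $\sigma(D_{\leq d})$ is pseudo-finite by Fact~\ref{imagepf} and contained in $D_{\leq d^*}$ for $d^* := \max\sigma(D_{\leq d})$, monotonicity of pseudo-finite sums together with the boundedness of the partial sums $H_h(d')$ by $\sum_D h$ combine to give $H_{h\circ\sigma}(d) \leq \sum_D h$. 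Hence $\sum_D h\circ\sigma$ exists and is bounded by $\sum_D h$, and applying the same reasoning with $\sigma$ replaced by $\sigma^{-1}$ and $h$ replaced by $h\circ\sigma$ yields the reverse inequality.

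The step I expect to be the main obstacle is the existence of sums over arbitrary pseudo-finite subsets of $D$ as definable functions of their parameters. Once this is granted, the needed monotonicity under inclusion and reindexing along definable bijections should follow by pseudo-finite induction using Fact~\ref{minmax}. But existence itself amounts to a bounded recursion scheme on pseudo-finite sets, which in first-order logic requires a coding of finite sequences of varying length; in the restrained case no discrete subring is available to furnish such a coding, and this is the same obstruction already identified in the remarks following the preceding commutativity lemma and in the discussion of $\mu((0,1))$. A natural way to attack it is to split the restrained case further via \cite[Theorem~3.3]{local}: if $\K$ has locally o-minimal open core then every pseudo-finite subset of $K$ is finite and the difficulty disappears, whereas if $\K$ defines an unbounded definable closed discrete $D_0 \subseteq K_{\geq 0}$ one would try to use $D_0$ as a weak surrogate for $N$ to push through a restricted recursion; making this last step precise is, to my mind, where the serious work lies.
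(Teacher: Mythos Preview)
The statement you are trying to prove is listed in the paper as a \emph{Conjecture}, not a theorem: the authors explicitly say just before it that they ``are not able to prove the above lemma without the assumption that $\K$ defines a discrete subring''. So there is no proof in the paper to compare against; the question is whether your proposal actually closes the gap.

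Your unrestrained case is fine and is exactly what the paper already does for that situation: once a discrete subring is available, one reindexes along the unique order-isomorphism with an initial segment of $N$ and invokes the commutativity-of-addition lemma (or a routine induction in the pseudo-finite subcase).

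The restrained case, however, is not a proof but a sketch with a hole you yourself flag. The crucial step is the very first one: you \emph{define} $H_{h\circ\sigma}(d)$ as a sum over the pseudo-finite set $D_{\leq d}$, but the definition of $\sum$ in the paper is itself given by a recursion producing the auxiliary function $H$, and the existence of $H$ is only asserted (via Corollary~\ref{cor:recursive}) when $\K$ is unrestrained. In the restrained case there is no mechanism in the paper guaranteeing that $H$ exists, so your displayed formula for $H_{h\circ\sigma}(d)$ is not known to denote anything. The subsequent ``reindexing'' and ``monotonicity'' steps presuppose exactly the same existence, so they do not help. Your proposed further split is accurate as far as it goes: if $\K$ has locally o-minimal open core then pseudo-finite sets are genuinely finite and everything is trivial; but in the remaining restrained case (an unbounded definable closed discrete $D_0$ exists, yet no discrete subring) you offer only the hope of using $D_0$ as a ``weak surrogate for $N$'' to run a restricted recursion. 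That is precisely the open problem: without closure of $D_0$ under addition and multiplication there is no $\beta$-function, hence no coding of sequences, hence no recursion. Nothing in your outline bridges this, so the proposal does not settle the conjecture.
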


\begin{lem}[Sigma-subadditivity of measure]
Let $\K$ be unrestrained.
Let $\Pa{A_i: i \in N}$ be a definable family of subsets of~$K$.
Then,
\[
\mu(\bigcup_i A_i) \leq \sum_i \mu(A_i).
\]
In particular, if $\mu(A_i) = 0$ for every $i \in N$, then
$\mu(\bigcup_i A_i) = 0$.
Therefore, if $A \subset K$ is \pn, then $\mu(A) = 0$.
\end{lem}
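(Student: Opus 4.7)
The plan is to run the standard $\varepsilon/2^{i+1}$ argument for countable subadditivity of outer measure, realising each classical step inside $\K$ via the parametrization and recursion developed in this subsection. Without loss of generality assume $\sum_i \mu(A_i) < \infty$, and fix $\varepsilon \in K_{>0}$. For each $i \in N$ one wants a definable cover $\mathcal U_i$ of $A_i$, indexed by $N$, with $M(\mathcal U_i) \leq \mu(A_i) + \varepsilon/2^{i+1}$. Since the $N$-indexed covers of $A_i$ are parametrized by $K$ via Lemma~\ref{lem:delta} and $i \mapsto \mu(A_i)$ is definable, the codes of admissible covers form a non-empty definable family $F_i \subseteq K$; after producing a definable selection $i \mapsto a_i \in F_i$ (see below), a further application of Lemma~\ref{lem:delta} packages the whole sequence $(\mathcal U_i)_{i \in N}$ into one element of $K$.

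Given such a sequence, fix a definable bijection $\theta: N \times N \to N$ (as used in the proof of Lemma~\ref{lem:delta}) and form the combined definable cover $\mathcal V$ of $\bigcup_i A_i$ by relabeling the $d$-th interval of $\mathcal U_i$ as the $\theta(i,d)$-th interval of $\mathcal V$. By a double-sum manipulation licensed by the commutativity-of-addition lemma above,
\[
M(\mathcal V) \;=\; \sum_{i \in N} M(\mathcal U_i) \;\leq\; \sum_{i \in N} \Pa{\mu(A_i) + \varepsilon/2^{i+1}} \;\leq\; \sum_i \mu(A_i) + \varepsilon.
\]
Since $\mathcal V$ is a legitimate definable $N$-indexed cover of $\bigcup_i A_i$, $\mu(\bigcup_i A_i) \leq M(\mathcal V)$; as $\varepsilon \in K_{>0}$ was arbitrary, the main inequality follows, and the ``in particular'' clause is immediate.

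The main obstacle is the definable selection $i \mapsto a_i \in F_i$, since definable completeness does not automatically supply a definable Skolem function ($\inf F_i$ need not lie in $F_i$). I would address this by first loosening the target (replacing $\varepsilon/2^{i+1}$ by $\varepsilon/2^{i+2}$), so that any admissible cover admits small perturbations that remain admissible; canonicalising the parametrization (e.g.\ ordering intervals by left endpoint, deleting redundant ones) then makes $F_i$ a definable set with non-empty interior in the code variable, from which one can take $a_i$ to be, say, the midpoint of the first bounded subinterval of $F_i$ located by a definable search indexed by $N$. The remaining bookkeeping, in particular verifying inside $K$ that the geometric tail $\sum_i \varepsilon/2^{i+1}$ is at most $\varepsilon$, is routine via definable completeness and the recursive definition of $H$ preceding the statement.
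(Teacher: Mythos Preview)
The paper's own proof is literally ``Left to the reader,'' so there is no approach to compare against; your outline --- the classical $\varepsilon/2^{i+1}$ argument implemented through the coding machinery of this subsection --- is certainly what the authors have in mind, and the bookkeeping (combining covers via~$\theta$, invoking commutativity of addition, bounding the geometric tail) is fine.

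You are also right that the only non-routine point is the definable selection $i \mapsto a_i$ of a code for a good cover of~$A_i$. Your proposed fix, however, does not work as written. The coding~$\delta$ is built from continued fractions (see the proofs of the lemmas on~$\gamma$ and~$\delta$), and continued fraction expansion is wildly discontinuous: a small perturbation of the code~$a$ can change every term of the decoded sequence. The openness you are appealing to lives in the space of endpoint-sequences $(K^2)^N$, not in the single code variable in~$K$, and it is precisely the passage between the two that destroys it. So ``$F_i$ has non-empty interior in the code variable'' is unjustified, no matter how you canonicalise the covers themselves.

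A cleaner fix, still in the spirit of your write-up, goes as follows. First reduce to covers by intervals with endpoints in $Q := \set{p/q : p \in Z,\ 0 < q \in N}$: given any cover with $M < \mu(A_i) + \varepsilon/2^{i+2}$, the definable floor function lets you enlarge each interval to a $Q$-interval while keeping $M < \mu(A_i) + \varepsilon/2^{i+1}$. Fix a definable enumeration $(J_n)_{n \in N}$ of all $Q$-intervals. Now build a good $Q$-cover $S^* \subseteq N$ of~$A_i$ by recursion along~$N$ (Corollary~\ref{cor:recursive}): having decided membership for $0,\dots,n-1$, put $n \in S^*$ iff there exists a good $Q$-cover of~$A_i$ agreeing with the decisions so far and containing~$n$. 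The invariant ``some good cover extends the current partial decision'' is maintained at every stage, the partial sums $\sum_{n \in S^*,\, n<m} \abs{J_n}$ are bounded by $\mu(A_i) + \varepsilon/2^{i+1}$ via the invariant, and for each $x \in A_i$ the least~$n$ with $x \in J_n$ is forced into~$S^*$ at stage~$n$. This selection is definable uniformly in~$i$, and the rest of your argument then goes through unchanged.
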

\begin{proof}
Left to the reader.
\end{proof}

\begin{cor}\label{cor:local-null}
Let $\K$ be unrestrained.
Let $X \subseteq K$ be a definable set, and $0 \leq \delta < 1 \in K$.
Assume that for every interval $I$ we have
$\mu(X \cap I) \leq \delta \abs I$.
Then, $\mu(X) = 0$.
\end{cor}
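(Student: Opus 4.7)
The plan is to apply the classical covering argument: approximate $\mu(X)$ from above by a definable cover, apply the hypothesis to each interval, and invoke $\sigma$-additivity.

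First, I would reduce to the case $\mu(X) < \infty$. Since $\K$ is definably complete and $N$ is definable, $N$ is unbounded in $K$ (otherwise its supremum $s$ would contradict the successor property: if $s - 1 < n \in N$ then $n+1 \in N$ and $n+1 > s$). Hence $X = \bigcup_{n \in N} \bigl(X \cap (-n,n)\bigr)$, so by $\sigma$-additivity it suffices to prove $\mu\bigl(X \cap (-n,n)\bigr) = 0$ for each $n \in N$. The hypothesis applied to the interval $(-n,n)$ itself already gives $\mu\bigl(X \cap (-n,n)\bigr) \leq 2n\delta < \infty$, so we may assume $\mu(X) < \infty$ from the start.

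Now fix $\eta \in K_{>0}$. By the definition of $\mu$ together with the remark preceding the corollary, in the unrestrained case the family of definable covers of $X$ by open intervals indexed by $N$ is itself definable, so there is a definable $\mathcal U = (I_d : d \in N)$ with $X \subseteq \bigcup_d I_d$ and $M(\mathcal U) \leq \mu(X) + \eta$. Applying the hypothesis to each $I_d$ and then $\sigma$-additivity to the definable family $(X \cap I_d : d \in N)$ yields
\[
\mu(X) \leq \sum_{d \in N} \mu(X \cap I_d) \leq \delta \sum_{d \in N} \abs{I_d} = \delta M(\mathcal U) \leq \delta \bigl(\mu(X) + \eta\bigr).
\]
Since $0 \leq \delta < 1$ and $\mu(X) < \infty$, this rearranges to $(1 - \delta)\mu(X) \leq \delta \eta$, and letting $\eta$ be arbitrary in $K_{>0}$ forces $\mu(X) = 0$.

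The only delicate point is ensuring finiteness of $\mu(X)$ before rearranging $(1-\delta)\mu(X) \leq \delta\eta$, which is exactly what the reduction via the intervals $(-n, n)$ accomplishes. Everything else is mechanical: indexing by $N$ keeps all families definable (via Lemma~\ref{lem:delta}), $\sigma$-additivity is the previous lemma, and the hypothesis is used once per $I_d$.
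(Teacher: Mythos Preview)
Your proof is correct and follows essentially the same covering-plus-$\sigma$-additivity argument as the paper: take a near-optimal cover of $X$ by intervals, apply the hypothesis to each interval, sum via $\sigma$-additivity, and obtain a contraction $\mu(X) \leq \delta(\mu(X)+\eta)$. Your preliminary reduction to $\mu(X) < \infty$ via the intervals $(-n,n)$ is a welcome addition, since the paper's version tacitly assumes $c = \mu(X) \in K$ when choosing a cover with $M(\mathcal U) < (1+\eps)c$; in that sense your write-up is slightly more complete.
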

\begin{proof}
Assume not: let $\mu(X) = c > 0$.
Fix $0 < \eps \in K$ small enough (how small will be clear later).
Let $\mathcal U := (I_d : d \in N)$ be a definable family of intervals, such
that $M(\mathcal U) < (1 + \eps) c$ and $X \subseteq \bigcup_d I_d$.
Thus, by our assumption on~$X$,
\[
\mu(X) \leq \sum_d \mu(I_d \cap X) \leq \sum_d \delta \abs {I_d} \leq \delta
(1 + \eps) c.
\]
If we take $\eps$ small enough, we have $ \delta (1 + \eps) < 1$, absurd.
\end{proof}

\subsection{The unrestrained case}

With those tools at our disposal, we can now mimic some of the proofs of
Lebesgue's theorem: we will follow the trace of \cite{Riesz} for the case when
$f$ is continuous, and of \cite{Rubel} for the general cases.

First, a technical lemma, which is easy to prove for every~$\K$, without using
Theorem~A: the proof is left to the reader (cf.~\cite{Riesz} for the details).
\begin{lem}[Riesz's Rising Sun Lemma]\label{lem:sun}
Let $a < b \in K$ and $g: [a, b] \to K$ be a definable bounded function.
For every $x \in [a, b]$, denote
\[
G(x) := \max \Pa{g(x), \limsup_{y \to x} g(y)}.
\]
Let
\[
E := \set{x \in (a, b): (\exists y \in (x, b])\ g(y) > G(x)}.
\]
Then, $E$ is an open definable subset of $(a, b)$.
Moreover, let $(a', b')$ be a maximal open subinterval of~$E$.
Then, $\limsup_{y \to {a'}^+} g(y) \leq G(b')$.
\end{lem}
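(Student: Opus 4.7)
The plan is to adapt the classical proof of Riesz's Rising Sun Lemma, replacing every appeal to ordinary completeness of $\R$ with an appeal to definable completeness of $\K$ (Fact~\ref{unbounded}). The crucial structural observation to be extracted first is that $G$ is an \emph{upper semicontinuous} function on $[a,b]$: for every $c \in K$, the set $\{x : G(x) \geq c\}$ is closed, since $G(x) \geq c$ is equivalent to $\sup_{y \in [x-\delta,x+\delta] \cap [a,b]} g(y) \geq c$ for every $\delta > 0$. From this, openness of $E$ is immediate: if $x \in E$ with witness $y > x$ satisfying $g(y) > G(x)$, upper semicontinuity provides an open neighborhood of $x$ on which $G$ stays strictly below $g(y)$; after shrinking so that this neighborhood lies entirely to the left of $y$, every $x'$ in it is also in $E$. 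Definability of $E$ is immediate from the defining formula.

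For the main claim, fix a maximal open subinterval $(a',b')$ of $E$ and write $M := G(b')$. By maximality $b' \notin E$, so $g(y) \leq G(b') = M$ for every $y \in (b',b]$; combining this with $g(b') \leq G(b') = M$, one checks that $G(x) \leq M$ for every $x \in [b',b]$. Now suppose for contradiction that $\limsup_{y \to {a'}^+} g(y) > M$. Choose $c \in K$ strictly between $M$ and this limsup; by definition of the limsup, there exists $y_0 \in (a',b')$ with $g(y_0) \geq c$, and consequently $G(y_0) \geq c$.

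Consider the definable set
\[
S := \set{x \in [y_0,b] : G(x) \geq c}.
\]
By upper semicontinuity $S$ is closed in $[y_0,b]$, and it is nonempty (it contains $y_0$) and bounded, so by Fact~\ref{unbounded} it has a maximum $x^*$. Since $G(x) \leq M < c$ for every $x \geq b'$, we must have $x^* < b'$, so $x^* \in (a',b') \subseteq E$. By definition of $E$, there exists $y \in (x^*,b]$ with $g(y) > G(x^*) \geq c$; but then $G(y) \geq g(y) > c$, so $y \in S$ with $y > x^*$, contradicting maximality of~$x^*$.

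The step I expect to cost the most care is the upper semicontinuity of $G$ (since $G$ is built to be the upper semicontinuous envelope of $g$, the claim is morally automatic but still requires unpacking the two cases in the definition of $G$), together with the elementary but fiddly extraction of $y_0 \in (a',b')$ with $g(y_0) \geq c$ from the definition of the one-sided limsup. Once those are in hand the rest is a single, clean application of definable completeness in the spirit of the classical proof.
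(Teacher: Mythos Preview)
The paper does not actually give a proof of this lemma: it says ``the proof is left to the reader (cf.~\cite{Riesz} for the details).'' Your proposal is a correct and complete argument that fills in exactly what the paper omits, and it follows the classical Riesz approach with the single necessary adaptation, namely replacing the use of completeness of~$\R$ by definable completeness (Fact~\ref{unbounded}) when extracting the maximum~$x^*$ of the closed, bounded, definable set~$S$. The preliminary observation that $G$ is the upper semicontinuous envelope of~$g$ (equivalently, $G(x) = \inf_{\delta>0} \sup_{[x-\delta,x+\delta]\cap[a,b]} g$) is the right way to organize the argument, and your verification that $G \leq M$ on $[b',b]$ and the extraction of $y_0 \in (a',b')$ from the one-sided $\limsup$ are both sound.
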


\begin{lem}\label{lem:monot-bounded-der}
Let $\K$ be unrestrained.
Let $a < b \in K$, and
$f: (a, b) \to K$ be a definable increasing function.
Define
\[
A_\infty := \set{x \in (a,b): \Lrf(x) = + \infty }.
\]
Then, $\mu(A_\infty) = 0$.
\end{lem}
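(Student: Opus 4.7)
\emph{Proof plan.} The plan is to adapt the classical proof of Lebesgue's theorem via Riesz's Rising Sun Lemma to our setting. First, exhausting $(a,b)$ by closed subintervals indexed by $N$ and applying sigma-subadditivity of $\mu$, I reduce to the case that $[a,b]$ is a closed bounded interval on which $f$ is bounded. By Lemma~\ref{lem:monot-cont} the set $J$ of discontinuity points of $f$ is pseudo-enumerable; using Lemma~\ref{lem:delta} to enumerate $J$ by $N$ and applying sigma-additivity to singletons, $\mu(J)=0$. Hence it suffices to prove $\mu(A_\infty\setminus J)=0$.

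Next, fix $C \in K_{>0}$ and set $B_C := \set{x \in (a,b) : \Lrf(x) > C} \supseteq A_\infty$. I apply Lemma~\ref{lem:sun} to the bounded definable function $g(x) := f(x) - Cx$ on $[a, b]$, obtaining a definable open set $E \subseteq (a,b)$. I verify $B_C \setminus J \subseteq E$: if $x \in B_C \setminus J$ then $g$ is continuous at $x$, so $G(x) = g(x)$, and $\Lrf(x) > C$ produces some $y > x$ with $f(y) - f(x) > C(y - x)$, whence $g(y) > g(x) = G(x)$ and therefore $x \in E$.

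For each maximal open subinterval $(a_k, b_k)$ of $E$, Lemma~\ref{lem:sun} together with monotonicity of $f$ (which makes $G(b_k) = g(b_k^+) = f(b_k^+) - C b_k$) yields $f(a_k^+) - C a_k \leq f(b_k^+) - C b_k$, i.e., $C(b_k - a_k) \leq f(b_k^+) - f(a_k^+)$. Summing over the components, which are pairwise disjoint open subintervals of $[a,b]$ so that the $f$-increments telescope up to $f(b) - f(a)$, gives
\[
C\,\mu(E) \leq \sum_k C(b_k - a_k) \leq \sum_k \bigl(f(b_k^+) - f(a_k^+)\bigr) \leq f(b) - f(a).
\]
Hence $\mu(A_\infty) = \mu(A_\infty\setminus J) \leq \mu(E) \leq (f(b) - f(a))/C$ for every $C > 0$, and consequently $\mu(A_\infty) = 0$.

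The main obstacle is making the sum over maximal open subintervals rigorous: one must enumerate the connected components of the definable open set $E \subseteq K$ by an initial segment of $N$ in order to invoke sigma-subadditivity of $\mu$ in the form provided. This should follow from the recursive coding apparatus of Section~\ref{sub:Peano}---each component being definably parameterized by its left endpoint, and the resulting family of intervals being enumerable by $N$ via a greedy recursion along the discrete subring---in the same spirit as the computations the paper leaves to the reader elsewhere in this section.
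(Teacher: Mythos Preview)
Your proposal is correct and follows essentially the same route as the paper: apply the Rising Sun Lemma to $g(x)=f(x)-Cx$, observe that $A_\infty\setminus\Disc f\subseteq E$, use the endpoint inequality on each maximal component to bound $\mu(E)\le (f(b)-f(a))/C$, and let $C\to\infty$. You are in fact more careful than the paper on two points it glosses over---the reduction to bounded $f$ via exhaustion by closed subintervals, and the enumeration of the connected components of $E$ by an initial segment of $N$---but the underlying argument is identical.
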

\begin{proof}
The same as in \cite[Assertion 1]{Riesz}.

More in details, given $c \in K$, define
\[\begin{aligned}
g(x) &:= f(x) - cx;\\
A_c &:= \set{x \in (a,b): \Lrf(x) > c};\\
%
E_c & := \set{x \in (a, b): (\exists y > x)\ g(y) > g(x^+)};\\
\Disc f &:= \set{x \in (a, b): f \text{ is discontinuous at } x}.
\end{aligned}\]
Notice that $\bigcap_c A_c = A_\infty$ and $\mu(\Disc f) = 0$
(because $\Disc f$ is \pn),
and therefore it suffices to show that
$\mu(A_c \setminus \Disc f)$ is arbitrarily small for $c$ large enough.
Moreover, $A_c \setminus \Disc f \subseteq E_c$;
therefore, it suffices to show that $\mu(E_c)$ is small.

Let $G$ be as in Lemma~\ref{lem:sun}; notice that
$G(x) = g(x^+)$, unless $x = b$, when $G(b) = g(b)$.
Thus, by Lemma~\ref{lem:sun}, $E_c$ is an open subset of $(a, b)$, and it
is the disjoint union of a definable family of open intervals
$\set{(a_k, b_k): k \in N} $, such that
$c(b_k - a_k) \leq f(b_k^+) - f(a_k^+)$.
Hence, $c \sum_{k \in N} (b_k - a_k) \leq f(b) - f(a)$,
and therefore $\mu(E_c) \leq \frac{f(b) - f(a)}{c}$.
\end{proof}

\begin{lem}\label{lem:monot-cont-diff}
Let $\K$ be unrestrained.
Let $f: K \to K$ be a definable monotone continuous function.
\begin{enumerate}
\item 
Let $A := \set{x \in K: \llf(x) < \Lrf(x)}$.
Then, $\mu(A) = 0$.
\item
The set of points $x \in (a,b)$ such that $f'(x)$ does not exist
or is infinite has measure $0$.
\end{enumerate}
\end{lem}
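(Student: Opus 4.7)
The plan is to adapt Riesz's classical proof of Lebesgue's differentiation theorem via two nested applications of the Rising Sun Lemma, exploiting the recursive and measure-theoretic infrastructure available when $\K$ is unrestrained. Before starting, I would first verify that $\Frac(Z)$ is dense in~$K$: translation-invariance of the gaps forces consecutive elements of~$Z$ to be at distance exactly~$1$, and $Z+1=Z$ forces $Z$ to be unbounded, so for any $a<b \in K$ we can choose $q \in Z_{>0}$ with $q(b-a) > 1$ and then $p \in Z$ with $qa < p < qb$, giving $a < p/q < b$.

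For part~(1), I would enumerate the pairs $(p,q) \in \Frac(Z)^2$ with $0 \leq p < q$ definably by $N$ (via Lemma~\ref{lem:delta}) and, for each such pair, set
\[
A_{p,q} := \set{x \in K : \llf(x) < p < q < \Lrf(x)}.
\]
By density, $A \subseteq \bigcup_{p<q} A_{p,q}$, so by the sigma-additivity of $\mu$ already established in this section it suffices to show $\mu(A_{p,q}) = 0$. For this, I would fix an arbitrary interval $I=(a,b)$ and verify the hypothesis of Corollary~\ref{cor:local-null} with $\delta = p/q < 1$, namely $\mu(A_{p,q} \cap I) \leq (p/q) \abs{I}$.

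The estimate proceeds via two nested applications of Lemma~\ref{lem:sun}. Applying its symmetric setting-sun variant (i.e., Lemma~\ref{lem:sun} on the reversed interval) to $g(x) = f(x) - px$ produces a definable family $(c_j, d_j)_{j \in N}$ of disjoint open intervals covering $\set{\llf < p} \cap I$ up to $\Disc f$ (which has measure zero since it is \pn and, in the unrestrained case, every \pn set is a countable union of singletons), satisfying $f(d_j) - f(c_j) \leq p(d_j - c_j)$. Then, inside each $(c_j, d_j)$, applying Lemma~\ref{lem:sun} to $h(x) = f(x) - qx$ yields a definable family of disjoint sub-intervals $(a_{j,k}, b_{j,k})_{k \in N}$ covering $\set{\Lrf > q} \cap (c_j, d_j)$ up to $\Disc f$ and satisfying $f(b_{j,k}) - f(a_{j,k}) \geq q(b_{j,k} - a_{j,k})$. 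Using monotonicity of $f$,
\[
q \sum_k (b_{j,k} - a_{j,k}) \leq \sum_k \bigl(f(b_{j,k}) - f(a_{j,k})\bigr) \leq f(d_j) - f(c_j) \leq p(d_j - c_j),
\]
so $\sum_k (b_{j,k} - a_{j,k}) \leq (p/q)(d_j - c_j)$, and summing over $j$ gives $\mu(A_{p,q} \cap I) \leq (p/q) \abs{I}$.

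For part~(2), I would combine part~(1) with Lemma~\ref{lem:monot-bounded-der} and its three analogues for the remaining Dini derivatives (obtained by reflecting~$f$), so that the set of points where $f'$ does not exist or is infinite is a finite union of measure-zero sets, hence of measure zero. The main technical obstacle is the definability bookkeeping of the nested Rising Sun decomposition: each disjoint family must be enumerated by an initial segment of~$N$ so that sigma-additivity applies, which is precisely what the unrestrained hypothesis (through Corollary~\ref{cor:recursive} and Lemma~\ref{lem:delta}) makes possible.
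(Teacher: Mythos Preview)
Your approach is essentially the same as the paper's: both use two nested applications of the Rising Sun Lemma together with Corollary~\ref{cor:local-null} (with $\delta=p/q$, resp.\ $c/C$) to bound $\mu(A_{p,q}\cap I)$, and both derive~(2) from~(1) plus Lemma~\ref{lem:monot-bounded-der} via reflection. You are more explicit than the paper about the countable-union step (enumerating pairs from $\Frac(Z)$ and invoking sigma-additivity), which is the right thing to do; note however that your remarks about $\Disc f$ are superfluous here, since $f$ is continuous by hypothesis.
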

\begin{proof}
We proceed as in \cite[Assertion~2]{Riesz}.
(2) follows easily from (1), thus we only need to prove (1).

It suffices to show that, for every $0 < c < C \in K$,
the set
\[
B := \set{x \in K: \llf(x) < c \ \&\ \Lrf(x) > C}
\]
has measure~$0$.
Let $\delta := c/C$: by Corollary~\ref{cor:local-null}, it suffices to show
that, for every $a < b \in K$, $\mu(B \cap (a, b)) < \delta (b - a)$.
As in\cite[Assertion~2]{Riesz}, by applying Lemma~\ref{lem:sun} to the
function $g(x) := f(-x) + c x$, we get that
$\set{x \in (a,b): \llf(x) <  c}$ is contained in an open definable
set~$D$, such that for every maximal interval $(a_k, b_k) \subseteq D$,
we have $f({b_k}) - f({a_k}) \leq c (b_k - a_k)$ (notice that we can take
the indexes $k$ in $N$ in a definable way).
We then apply again Lemma~\ref{lem:sun} to the function
$g(x) := f(x) - C x$ restricted to each interval $(b_k, a_k)$, and we get
that $D \cap (b_k, a_k)$ is contained in an open definable set $D_k$,
such that $\mu(D_k) \leq \frac{f(b_k) - f(a_k)} C$.
Thus,
\[
\mu(B \cap (a, b)) \leq \sum_k \mu(D_k) \leq \frac{\sum_k f(b_k) - f(a_k)} C
\leq  \delta \sum_k (b_k - a_k) \leq \delta (b - a).
\qedhere
\]
\end{proof}


Let us treat now the case when $f$ is not continuous: we will follow the ideas
in~\cite{Rubel}.

\begin{lem}\label{lem:monot-inverse}
Let $f: [a, b] \to K$ be a  strictly increasing definable function.
Then, $f(x)$ has a continuous definable inverse; that is, there exists a
continuous, non-decreasing, definable function $F$ defined on $[f(a), f(b)]$,
such that $F(f(x)) = x$ for every $x \in [a, b]$.
\end{lem}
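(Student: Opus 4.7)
The plan is to define $F$ directly via a supremum, which works even though $f$ itself need not be continuous (its image may have gaps in $[f(a),f(b)]$). Specifically, I would set
\[
F(y) \;:=\; \sup\{x \in [a,b] : f(x) \leq y\}
\]
for each $y \in [f(a),f(b)]$. For every such $y$, the set on the right is definable, contains $a$, and is bounded above by $b$, so its supremum exists in $[a,b]$ by definable completeness (Fact~\ref{unbounded}). This makes $F$ a well-defined, definable function $[f(a),f(b)] \to [a,b]$, and it is clearly non-decreasing since enlarging $y$ can only enlarge the defining set.

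Next I would verify the inverse property. Because $f$ is strictly increasing, for any $x_0 \in [a,b]$ the set $\{x \in [a,b] : f(x) \leq f(x_0)\}$ equals $[a,x_0]$, whose supremum is $x_0$; hence $F(f(x_0)) = x_0$.

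The only real content is continuity of $F$. Since $F$ is monotone and definable, by (the analogue of) Lemma~\ref{lem:monot-cont} any discontinuity of $F$ at a point $y_0 \in [f(a),f(b)]$ produces a one-sided jump: letting $x_1 := \sup_{y < y_0} F(y)$ and $x_2 := \inf_{y > y_0} F(y)$ (both existing by definable completeness), at least one of $x_1 < F(y_0)$ or $F(y_0) < x_2$ holds, so in particular $x_1 < x_2$. Now choose any $x^\ast \in (x_1,x_2)$ with $x^\ast \neq F(y_0)$ (possible because $(x_1,x_2)$ is not a singleton), and set $y^\ast := f(x^\ast) \in [f(a),f(b)]$. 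By the inverse property, $F(y^\ast) = x^\ast$, and since $x^\ast \neq F(y_0)$ we have $y^\ast \neq y_0$. But $y^\ast < y_0$ would force $F(y^\ast) \leq x_1 < x^\ast$, while $y^\ast > y_0$ would force $F(y^\ast) \geq x_2 > x^\ast$, both contradicting $F(y^\ast)=x^\ast$. Hence $F$ is continuous.

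The main potential obstacle is the continuity argument, since in the definably complete (but not necessarily o-minimal) setting one must avoid any classical appeal to sequences or to the intermediate value theorem for $F$ itself; however, the above argument uses only definable completeness to form $x_1, x_2$ and the strictly increasing/definable character of $f$ to get a contradictory input $y^\ast$, so no further machinery is needed.
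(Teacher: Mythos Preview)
Your proposal is correct and takes exactly the same approach as the paper: the paper's entire proof is the single line ``Define $F(y) := \sup\{t: f(t) \leq y\}$'', which is precisely your definition. You have simply filled in the verification of well-definedness, the inverse identity, and continuity that the paper leaves to the reader; your jump argument for continuity is sound (using only definable completeness and strict monotonicity of $f$), though the reference to Lemma~\ref{lem:monot-cont} is unnecessary since the existence of one-sided limits for a monotone function is elementary.
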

\begin{proof}
Define $F(y) := \sup\set{t: f(t) \leq y}$.
\end{proof}


\begin{lem}
Let $\K$ be unrestrained and $a < b \in K$.
Let $f: [a,b] \to K$ be a nondecreasing definable function.
Let $E$ be the set of $x \in [a,b]$ such that either $f'$ does not exists or it is
infinite.
Then, $\mu(E) = 0$.
\end{lem}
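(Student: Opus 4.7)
The strategy, following Rubel, is to reduce to the continuous case handled by Lemma~\ref{lem:monot-cont-diff} by inverting a strictly increasing perturbation of~$f$. Replace $f$ by $g(x) := f(x) + x$: since $f$ is nondecreasing, $g$ is a definable strictly increasing function with $g(x_2) - g(x_1) \geq x_2 - x_1$, and $g' = f' + 1$, so it suffices to prove that $E_g := \set{x \in [a,b] : g'(x) \text{ fails to exist or equals } +\infty}$ has measure zero.

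Apply Lemma~\ref{lem:monot-inverse} to obtain a continuous non-decreasing definable inverse $F : [g(a), g(b)] \to [a, b]$ satisfying $F(g(x)) = x$ for $x \in [a, b]$. The lower bound on $g$ makes $F$ $1$-Lipschitz, so unwinding the definition of~$\mu$ gives $\mu(F(X)) \leq \mu(X)$ for every definable $X \subseteq [g(a), g(b)]$. Since $F$ is continuous and monotone, Lemma~\ref{lem:monot-cont-diff} gives $\mu(S_1) = 0$ for $S_1 := \set{y : F'(y) \text{ fails to exist or is infinite}}$, and hence $\mu(F(S_1)) = 0$.

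Next I would split $E_g = (E_g \cap \Disc g) \cup (E_g \setminus \Disc g)$. The first piece has measure zero: $\Disc g = \Disc f$ is pseudo-enumerable by Lemma~\ref{lem:monot-cont}, and since $\K$ is unrestrained it can be reindexed along an initial segment of $N$, so sigma-additivity of $\mu$ together with $\mu(\{x\}) = 0$ forces $\mu(\Disc g) = 0$. For the second, fix $x \in E_g \setminus \Disc g$ and set $y := g(x)$; I claim $F'(y) \in S_0 \cup S_1$, where $S_0 := \set{y : F'(y) = 0}$. Indeed, if $F'(y)$ existed as a finite positive number $L$, then for any sequence $x_n \to x$ with $x_n \neq x$ the continuity of $g$ at $x$ gives $y_n := g(x_n) \to y$ with $y_n \neq y$, and $F(y_n) = x_n$ by the identity $F \circ g = \mathrm{id}$; hence $(x_n - x)/(y_n - y) \to L$, forcing $g'(x) = 1/L$ to exist finitely and contradicting $x \in E_g$. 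Thus $E_g \setminus \Disc g \subseteq F(S_0) \cup F(S_1)$.

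It remains to bound $F(S_0)$. By the same limit calculation, if $F'(y) = 0$ and $x := F(y)$ is a continuity point of $g$, then $(x_n - x)/(y_n - y) \to 0$ for any $x_n \to x$ with $y_n := g(x_n)$, which (since both differences have the same sign) forces $g'(x) = +\infty$. Thus $F(S_0) \setminus \Disc g \subseteq \set{x : g'(x) = +\infty} \subseteq \set{x : \Lambda_r g(x) = +\infty}$, a set of measure zero by Lemma~\ref{lem:monot-bounded-der} applied to~$g$. Combining the three pieces via sigma-additivity yields $\mu(E_g) = 0$, hence $\mu(E) = 0$. The main technical point is the inverse-function correspondence at continuity points of~$g$; happily only the direction ``$F$ differentiable implies $g$ differentiable'' is needed, and it follows cleanly from the continuity of $g$ at $x$ together with $F \circ g = \mathrm{id}$, so the auxiliary argument involving gaps in $g([a,b])$ is entirely avoided.
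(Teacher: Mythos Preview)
Your proof is correct and follows the same route as the paper (both after Rubel): replace $f$ by $f(x)+x$, invert via Lemma~\ref{lem:monot-inverse}, apply Lemma~\ref{lem:monot-cont-diff} to the continuous inverse $F$, and handle the residual pieces with Lemmas~\ref{lem:monot-cont} and~\ref{lem:monot-bounded-der}. You are in fact more careful than the paper in one respect: you make explicit the $1$-Lipschitz property of $F$ and the inclusion $E_g \setminus \Disc g \subseteq F(S_0) \cup F(S_1)$, which is precisely what is needed to transport the measure-zero exceptional set for $F'$ from $[g(a),g(b)]$ back to $[a,b]$---a step the paper's proof leaves implicit.
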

\begin{proof}
By replacing $f(x)$ with $f(x) + x$, w.l.o.g.\ we can assume that $f$ is
strictly increasing.
Thus, we can apply Lemma~\ref{lem:monot-inverse}: let $F$ be defined there.
By lemmas~\ref{lem:monot-cont-diff} and~\ref{lem:monot-bounded-der}, $F'$ exists
and is finite outside a definable set of measure~$0$.
Given $x \neq y \in [a,b]$, we write
\[
\frac{f(y) - f(x)}{y - x} = \left(
\frac{F(f(y)) - F(f(x))}
{f(y) - f(x)}
\right)^{-1}.
\]
Thus, if $f$ is continuous at $x$ and $F'(x)$ exists, we have that
$f'(x) = 1/F(f'(x)) \in K \cup \set{+ \infty}$.
However, by Lemma~\ref{lem:monot-cont}, the set of discontinuity points of $f$
is \pn, and \emph{a fortiori} of measure $0$, and by
Lemma~\ref{lem:monot-bounded-der}, $f'(x) < + \infty$ outside a set of
measure~$0$.
\end{proof}

\begin{cor}
Let $\K$ be unrestrained.
Let $f: K \to K$ be a definable monotone function.
Let $E$ be the set of $x \in K$ such that $f'(x)$ does not exists or is
infinite.
Then, $\mu(E) = 0$, and therefore $E$ has empty interior.
\end{cor}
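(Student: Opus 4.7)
The plan is to extend the preceding lemma, which handles bounded intervals, to all of $K$ by writing $K$ as a definable union indexed by $N$ of intervals of the form $[-n, n]$ and then invoking $\sigma$-additivity of the measure.

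First, I would verify that $N$ is cofinal in $K$: as $N$ is closed (being discrete) it cannot be bounded above, for otherwise by Fact~\ref{unbounded} it would have a maximum $n_0$, contradicting $n_0 + 1 \in N$. Hence every element of $K$ lies below some $n \in N$, and $K = \bigcup_{n \in N_{\geq 1}} [-n, n]$ with the union definably indexed.

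Next, without loss of generality assume $f$ is nondecreasing. For each $n \in N_{\geq 1}$, applying the preceding lemma to $f$ restricted to $[-n, n]$ gives $\mu(E_n) = 0$ for $E_n := E \cap [-n, n]$. Since $E = \bigcup_{n \in N_{\geq 1}} E_n$, the $\sigma$-additivity of measure yields
\[
\mu(E) \leq \sum_{n \in N_{\geq 1}} \mu(E_n) = 0.
\]

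To conclude that $E$ has empty interior, I would show that every nonempty open interval $(a, b) \subseteq K$ satisfies $\mu((a,b)) > 0$; this is an easy adaptation of the proof of the proposition $\mu((0,1)) = 1$, combined with translation and scaling invariance of $\mu$ (both straightforward from the definition together with the commutativity-of-addition lemma). Thus $\mu((a,b)) = b - a > 0$, and if $E$ contained a nonempty open interval it would force $\mu(E) > 0$, contradicting what was just proved. The bulk of the work has been done in the preceding lemma; the only mildly new ingredients are the cofinality of $N$ in $K$ and the positivity of measure on nonempty intervals, both of which are routine and should pose no real obstacle.
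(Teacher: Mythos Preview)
Your argument is correct and is exactly the routine derivation the paper leaves implicit (the corollary is stated without proof). The two steps---reducing to bounded intervals $[-n,n]$ via cofinality of $N$ and the $\sigma$-subadditivity lemma, then using $\mu((a,b)) = b - a > 0$ to rule out interior---are the expected ones; the only point worth making explicit is that differentiability of $f$ and of $f\restriction[-n,n]$ agree on the open interval $(-n,n)$, so $E\cap(-n,n)$ is contained in the lemma's null set, which you implicitly use.
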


\subsection{Other problems}

Lest the reader thinks the transfer from the real case to the definably
complete one is always automatic, we will conclude with an open problem
and recall some counterexamples. 

\begin{conjecture}[Brouwer's Fixed Point]
Let $f: [0,1]^2 \to [0,1]^2$ be a definable continuous function.
Then, $f$ has a fixed point,
i.e.\ there exists $c \in [0,1]^2$ such that $f(c) = c$.
\end{conjecture}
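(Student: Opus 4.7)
The plan is to invoke Theorem~A to split the argument into the restrained and unrestrained cases, following the strategy used for Theorem~B.

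In the unrestrained case $\K$ defines a discrete subring, and both recursion (Corollary~\ref{cor:recursive}) and the beginnings of Lebesgue measure theory from \S\ref{sub:Peano} are available. This should suffice to transfer a classical proof. A natural route is via Sperner's lemma: for each $n \in N$, definably triangulate $[0,1]^2$ into $O(n^2)$ simplices with vertices at $(i/n, j/n)$ for $i, j \in N$, Sperner-label each vertex $v$ according to which coordinate direction of $f(v) - v$ dominates, and apply the combinatorial Sperner lemma (which is a theorem of second-order arithmetic, hence provable inside the arithmetic part of $\K$) to produce a small triangle $T_n$ meeting all three labels. As $n$ ranges over $N$, the centroids $c_n$ of the $T_n$ form a definable bounded sequence indexed by $N$, and a definable Bolzano--Weierstrass argument yields a limit point $c \in [0,1]^2$ with $f(c) = c$ by continuity. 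Alternatively, one can try to mimic the analytic Milnor-style proof, since Lebesgue measure and a workable notion of $\Cone$ approximation are both present.

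In the restrained case neither recursion nor $\sigma$-additivity of measure is at hand, and a genuinely different strategy is needed. The natural attempt is proof by contradiction: if $f$ has no fixed point, then $g(x) := (f(x) - x)/\norm{f(x) - x}$ is a definable continuous map $[0,1]^2 \to S^1$, and one would hope to turn this into a definable retraction $[0,1]^2 \to \partial [0,1]^2$ and contradict it via either the tameness of $f$ on the dense $\Cone$-open set supplied by Lemma~\ref{lem:restrained-Cone} or the nowhere-density conclusion of Theorem~A(I). The main obstacle, and presumably the reason the statement is stated only as a conjecture, lies precisely here: restrained expansions can be highly non-o-minimal while simultaneously lacking any definable indexing set on which to run the classical combinatorial or singular-homological machinery, so turning the topological intuition ``there is no retraction of the disc onto its boundary'' into a proof appears to require genuinely new tools for definable topology in the restrained regime.
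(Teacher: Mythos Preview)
The statement you are attempting to prove is listed in the paper as a \emph{Conjecture}, not a theorem: there is no proof in the paper to compare your proposal against. The authors explicitly place it in the section ``Other problems'' as an illustration that transfer from the real case is not always automatic.

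Your proposal is accordingly not a proof but an outline of a possible attack, and you are candid about this yourself: you say that the restrained case ``appears to require genuinely new tools'' and identify this as ``presumably the reason the statement is stated only as a conjecture.'' That is an accurate reading of the situation, but it means what you have submitted is a research plan with an acknowledged gap, not a proof.

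On the substance: your sketch for the unrestrained case via Sperner's lemma is plausible. With a definable discrete subring one does have recursion and a definable enumeration of the grid vertices, Sperner's lemma is a finitary combinatorial statement provable by induction in Peano arithmetic, and a definable Bolzano--Weierstrass argument for sequences indexed by $N$ is available (nested definable closed intervals have nonempty intersection by definable completeness). So the unrestrained half could likely be completed along these lines. The restrained half, however, remains open exactly as you describe: absent o-minimality there is no cell decomposition or definable homology to fall back on, and the $\Cone$-density result of Lemma~\ref{lem:restrained-Cone} does not obviously yield a no-retraction theorem. Until that gap is closed, the conjecture stands.
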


\begin{fact}[Hrushovski, Peterzil \cite{Hrushovski-Peterzil}]
There exists an o-minimal structure $\K$
and a definable $\mathcal C^\infty$
nonzero function $f: I \to K$, where $I$ is an open interval around $0$,
such that $f(0) = 0$ and 
$f$ satisfies the differential equation
\begin{equation}\label{eq:hp}
f(x) = x^2 f'(x) + x, \quad
f(0) = 0, \quad  f'(0) = 1.%
\footnote{$f$ is the formal power series $\sum_{n \geq 1} (n-1)!\, x^{n}$.}
\end{equation}
For every $0 < \eps \in \R$ there is no $\Cone$ function
$f: (-\eps, \eps) \to \R$ satisfying Equation~\eqref{eq:hp}.
\end{fact}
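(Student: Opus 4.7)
I would separate the two assertions: existence of a definable $C^\infty$ function in some o-minimal structure satisfying the ODE, and non-existence of a real $C^1$ function on $[0,\eps)$ satisfying it.

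For the existence, the natural candidate for $f$ is the Borel sum of the divergent formal series $\hat f(x) = \sum_{n\ge 1}(n-1)!\,x^n$. Its Borel transform $\sum_{n\ge 1}\frac{(n-1)!}{n!}\,t^n = -\log(1-t)$ converges on $|t|<1$, and the Laplace integral $f(x) := \frac{1}{x}\int_0^{\infty}(-\log(1-t))\,e^{-t/x}\,dt$ defines a real-analytic function on $(0,\infty)$ satisfying the ODE, with $\hat f$ as its asymptotic expansion at $0^+$. To realise $f$ as a definable $C^\infty$ function in an o-minimal expansion, I would invoke the Rolin--Speissegger--Wilkie machinery producing o-minimal structures from quasi-analytic algebras: one adjoins $f$, together with the germs needed to close under derivatives, compositions, and suitable restrictions, to the algebra of globally subanalytic germs. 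Smoothness and the initial conditions then follow from the asymptotic expansion.

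For the non-existence, suppose $f\colon[0,\eps)\to\R$ is $C^1$ with $f(0)=0$, $f'(0)=1$, and $f(x)=x^2 f'(x)+x$. Rewriting as $f'(x)=(f(x)-x)/x^2$ on $(0,\eps)$ and iterating shows $f$ is $C^\infty$ away from~$0$. An induction on~$n$ using the ODE then forces each $f^{(n)}$ to extend continuously to $0$ with $f^{(n)}(0)=n!(n-1)!$, matching the formal recursion $a_n=(n-1)a_{n-1}$. From here I would aim for a contradiction by comparing $f$ on small intervals $(0,x_n)$, with $x_n\to 0^+$, to its $n$-th Taylor polynomial via Taylor's theorem with integral remainder, converting the factorial growth of the coefficients into a quantitative estimate incompatible with the boundedness of $f$ on $[0,\eps)$.

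The main obstacle is this last quantitative step. By Borel's theorem, a $C^\infty$ function on $[0,\eps)$ realising any formal Taylor expansion at~$0$ always exists, so factorial growth of Taylor data alone cannot rule out $C^\infty$ (a fortiori $C^1$) regularity. The argument must therefore use specific features of the ODE beyond the Taylor recursion -- plausibly through a careful analysis of the general solution of the linear ODE $f'-f/x^2=-1/x$ via the integrating factor $e^{1/x}$, in which the singular behaviour at~$0$ is forced in a way Borel's construction cannot mimic.
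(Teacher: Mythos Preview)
The paper gives no proof of this Fact; it is simply quoted from \cite{Hrushovski-Peterzil}. So there is nothing in the paper to compare your argument against, but your attempt has a genuine problem worth naming.

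Your closing paragraph already locates the obstruction, and it is decisive: the second assertion, as literally stated, is false. On $(0,\eps)$ every solution of $f' = (f-x)/x^2$ can be written as $f(x) = e^{-1/x}\bigl(C + \int_x^{1} e^{1/t}\,t^{-1}\,dt\bigr)$, and the standard asymptotics of the exponential integral give $f(x) = \sum_{n=1}^{N}(n-1)!\,x^{n} + O(x^{N+1})$ as $x\to 0^+$, for every $N$ and every~$C$. Setting $f(0):=0$ one gets $f(x)/x \to 1$ and $f'(x)=(f(x)-x)/x^2 \to 1$, so $f$ is $\Cone$ --- indeed $\mathcal C^\infty$ --- on $[0,\eps)$ and satisfies all of~\eqref{eq:hp}. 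The quantitative contradiction you were searching for therefore does not exist; your Borel-theorem remark was pointing at a counterexample, not at a gap to be closed.

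For the first assertion, your route via Borel summation and Rolin--Speissegger--Wilkie stays over~$\R$, which is not what Hrushovski--Peterzil do: their $\K$ is built over a non-archimedean real closed field, using the Lipshitz--Robinson theory of analytic functions, where the Euler series $\sum_{n\ge 1}(n-1)!\,x^n$ actually converges on a neighbourhood of~$0$. The intended contrast is with the non-existence of a \emph{real-analytic} solution near~$0$ in~$\R$ (its Taylor series would have to be the divergent Euler series), not of a $\Cone$ one; the paper's formulation is imprecise on this point, and no argument along your lines can repair it.
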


Another counterexample to some kind of ``transfer principle'' for restrained
(indeed, locally o-minimal) structures can be found in work by Rennet in \cite{Rennet}.

For unrestrained structures, let $\mathcal R$ be an expansion of
$\<\R, +, \cdot, <, \N \>$, $L$~be its language,
$T_0$~be the $L$-theory whose models are
definably complete structures with a discrete subring, and $T$ be any
recursive set of sentences true in $\mathcal R$ and extending~$T_0$.
By G\"odel's incompleteness theorem, there is a model of $T$ which is not
elementarily equivalent to~$\mathcal R$.

\end{document}